\documentclass[twoside,11pt]{article}
\usepackage{a4}
\sloppy
\parskip=1ex

\usepackage{amssymb}
\usepackage{amsmath}
\usepackage{amsthm}
\usepackage{enumerate}
\usepackage{color}

\newtheorem{theorem}{Theorem}
\newtheorem{corollary}{Corollary}
\newtheorem{lemma}{Lemma}

\newtheorem*{claim*}{Claim}

\newtheorem{theorem MTP}{Mass Transference Principle}
\newtheorem*{theorem MTP*}{Mass Transference Principle}
\newtheorem{theorem K}{Khintchine's Theorem}
\newtheorem*{theorem K*}{Khintchine's Theorem}
\newtheorem{theorem J}{Jarn\'{\i}k's Theorem}
\newtheorem*{theorem J*}{Jarn\'{\i}k's Theorem}
\newtheorem{theorem KJ}{Khintchine--Jarn\'{\i}k Theorem}
\newtheorem*{theorem KJ*}{Khintchine--Jarn\'{\i}k Theorem}
\newtheorem{theorem BV1}{Theorem BV1}
\newtheorem*{theorem BV1*}{Theorem BV1}
\newtheorem{theorem BV2}{Theorem BV2}
\newtheorem*{theorem BV2*}{Theorem BV2}
\newtheorem{theorem KG}{Theorem KG}
\newtheorem*{theorem KG*}{Theorem KG}
\newtheorem{theorem IHKG}{Inhomogeneous Khintchine--Groshev Theorem}
\newtheorem*{theorem IHKG*}{Inhomogeneous Khintchine--Groshev Theorem}
\newtheorem{theorem DLN1}{Theorem DLN1}
\newtheorem*{theorem DLN1*}{Theorem DLN1}
\newtheorem{theorem DLN2}{Theorem DLN2}
\newtheorem*{theorem DLN2*}{Theorem DLN2}
\newtheorem{theorem DLN3}{Theorem DLN3}
\newtheorem*{theorem DLN3*}{Theorem DLN3}
\newtheorem{theorem S}{Theorem S}
\newtheorem*{theorem S*}{Theorem S}
\theoremstyle{definition}

\theoremstyle{remark}
\newtheorem{remark}{Remark}
\newtheorem*{remark*}{Remark}


\renewcommand{\Bbb}[1]{\mathbb{#1}}
\newcommand{\N}{{\Bbb N}}         
\newcommand{\I}{{\Bbb I}}
\newcommand{\R}{{\Bbb R}}        
\newcommand{\Z}{{\Bbb Z}}         
\newcommand{\K}{{\Bbb K}}         

\newcommand{\cA}{{\cal A}}

\newcommand{\cC}{{\cal C}}

\newcommand{\cG}{{\cal G}}
\newcommand{\cH}{{\cal H}}

\newcommand{\cK}{{\cal K}}

\newcommand{\cM}{{\cal M}}

\newcommand{\cR}{{\cal R}}

\newcommand{\ve}{\varepsilon}

\newcommand{\Om}{\Omega}
\newcommand{\U}{\Upsilon}
\newcommand{\La}{\Lambda}

\newcommand{\tU}{\tilde\Upsilon}

\newcommand{\Pj}{\Phi_j}


\newcommand{\diam}{r}
\newcommand{\dist}{\operatorname{dist}}

\newcommand{\vv}[1]{{\mathbf{#1}}}

\renewcommand{\le}{\leq}
\renewcommand{\ge}{\geq}

\newcommand{\kgb}{K_{G,B}}

\newcommand{\hf}{\cH^f}
\newcommand{\hs}{\cH^s}

\newcommand{\Lj}{(L;j)}
\newcommand{\aj}{(A;j)}
\newcommand{\ajj}{(A';j')}

\newcommand{\caj}{\cC\aj}

\newcommand{\x}{\mathbf{x}}
\newcommand{\y}{\mathbf{y}}
\newcommand{\p}{\mathbf{p}}
\newcommand{\q}{\mathbf{q}}
\newcommand{\bv}{\mathbf{v}}
\newcommand{\0}{\mathbf{0}}

\newcommand{\dimh}{\dim_H}

\begin{document}

\title{\bf A Mass Transference Principle for systems of linear forms and its applications}
\author{Demi Allen\footnote{Supported by EPSRC Doctoral Training Grant: EP/M506680/1} \and Victor Beresnevich\footnote{Supported by EPSRC Programme Grant: EP/J018260/1}}
\date{\footnotesize{\it Dedicated to Denise and John Allen --- Nan and Grandad --- \\
on the occasions of their 70th birthdays.}}
\maketitle
\begin{abstract}
In this paper we establish a general form of the Mass Transference Principle for systems of linear forms conjectured in \cite{BBDV ref}. We also present a number of applications of this result to problems in Diophantine approximation. These include a general transference of Lebesgue measure Khintchine--Groshev type theorems to Hausdorff measure statements. The statements we obtain are applicable in both the homogeneous and inhomogeneous settings as well as allowing transference under any additional constraints on approximating integer points. In particular, we establish Hausdorff measure counterparts of some Khintchine--Groshev type theorems with primitivity constraints recently proved by Dani, Laurent and Nogueira \cite{DLN ref}.
\end{abstract}
\noindent{\small 2000 {\it Mathematics Subject Classification}\/: Primary 11J83, 28A78; Secondary 11J13, 11K60

\noindent{\small{\it Keywords and phrases}\/: Diophantine Approximation, Mass Transference Principle, Khintchine--Groshev Theorem,  Hausdorff measures.}

\section{Introduction}

The main goal of this paper is to settle a problem posed in \cite{BBDV ref} regarding the Mass Transference Principle, a technique in geometric measure theory that was originally discovered in \cite{BV MTP} having primarily been motivated by applications in Metric Number Theory. To some extent the present work is also driven by such applications.

To begin with, recall that the sets of interest in Metric Number Theory often arise as the upper limit of a sequence of `elementary' sets, such as balls, and satisfy elegant zero-one laws. Recall that if $(E_i)_{i \in \N}$ is a sequence of sets then the \emph{upper limit} or \emph{limsup} of this sequence is defined as 
\begin{align*}
\limsup_{i \to \infty}{E_i} :&= \{x: x \in E_i \text{ for infinitely many } i \in \N\} \\
                             &= \bigcap_{n \in \N}{\bigcup_{i \geq n}{E_i}}.
\end{align*}
These zero-one laws usually involve simple criteria, typically the convergence or divergence of a certain sum, for determining whether the measure of the $\limsup$ set is zero or one. To give an example, consider Khintchine's classical theorem \cite{kh} that deals with the set $\cK(\psi)$ of $x\in[0,1]$ such that
\begin{equation}\label{Khin}
|qx-p|<\psi(q)
\end{equation}
holds for infinitely many $(p,q)\in\Z\times\N$. Clearly, $\cK(\psi)$ is the $\limsup$ set of the intervals defined by \eqref{Khin}, which are the `elementary' sets in this setting. Khintchine proved that for any arithmetic function $\psi:\N\to\R^+:=[0,+\infty)$ such that $q\psi(q)$ is monotonic the Lebesgue measure of $\cK(\psi)$ is {\sc zero} if $\sum_{q=1}^\infty\psi(q)<\infty$ and {\sc one} otherwise.

Around 1930 Jarn\' ik and Besicovitch both independently considered the size of $\cK(\psi)$ using Hausdorff measures and dimension, thus proving results enabling us to see the difference between sets $\cK(\psi)$ indistinguishable by Khintchine's result. For example, the Jarn\' ik--Besicovitch Theorem says that the Hausdorff dimension of $\cK(q\mapsto q^{-v})$ is $\tfrac2{v+1}$ for $v>1$.

Over time the findings of Khintchine, Jarn\'ik and Besicovitch have been sharpened and generalised in numerous ways, including to involve problems concerning systems of linear forms. The theories for the ambient measure and Hausdorff measures had been evolving relatively separately until the discovery of the so-called Mass Transference Principle \cite{BV MTP}. This is a technique that enables one to easily obtain Hausdorff measure statements from a priori less general Lebesgue measure statements.

Let $f$ be a dimension function and let $\cH^f(\,\cdot\,)$ denote Hausdorff $f$-measure (see \S\ref{HM} for definitions). Given a ball $B := B(x,r)$ in $\R^k$ of radius $r$ centred at $x$, let $B^f := B(x,f(r)^{\frac{1}{k}})$. When $f(x) = x^s$ for some $s > 0$ we will denote $B^f$ by $B^s$. In particular, we always have  that $B^k = B$. The following statement is the main result of \cite{BV MTP}.

\begin{theorem MTP*}
Let $\{B_j\}_{j\in\N}$ be a sequence of balls in $\R^k$ with
$\diam(B_j)\to 0$ as $j\to\infty$. Let $f$ be a dimension function
such that $x^{-k}f(x)$ is monotonic. Suppose that for any ball $B$ in $\R^k$
\[ \cH^k\big(\/B\cap\limsup_{j\to\infty}B^f_j{}\,\big)=\cH^k(B) \ .\]
Then, for any ball $B$ in $\R^k$
\[ \cH^f\big(\/B\cap\limsup_{j\to\infty}B^k_j\,\big)=\cH^f(B) \ .\]
\end{theorem MTP*}

The original Mass Transference Principle \cite{BV MTP} stated above is a result regarding $\limsup$ sets which arise from sequences of balls. For the sake of completeness, we remark here that recently some progress has been made towards extending the Mass Transference Principle to deal with $\limsup$ sets defined by sequences of rectangles \cite{WWX ref}. In this paper we will be dealing with the extension of the Mass Transference Principle in the setting where we are interested in approximation by planes. This is not a new direction of research. Indeed, such an extension has already been obtained in \cite{BV Slicing}. However, the mass transference principle result of \cite{BV Slicing} carries some technical conditions which arise as a consequence of the ``slicing" technique that was used for the proof. These conditions were conjectured to be unnecessary and verifying that this is indeed the case is the main purpose of this paper.

Let $k, m \geq 1$ and $l \geq 0$ be integers such that $k = m + l$. Let $\cR := (R_j)_{j \in \N}$ be a family of planes in $\R^k$ of common dimension $l$. For every $j \in \N$ and $\delta \geq 0$, define
\[ \Delta(R_j,\delta) := \{\mathbf{x} \in \R^k : \dist(\mathbf{x},R_j)<\delta\},\]
where $\dist(\vv x,R_j)=\inf\{\|\vv x-\vv y\|:\vv y\in R_j\}$ and $\|\cdot\|$ is any fixed norm on $\R^k$.

Let $\U : \N \to \R : j \mapsto \U_j$ be a non-negative real-valued function on $\N$ such that $\U_j \to 0$ as $j \to \infty$.
Consider
\[ \La(\U) := \{\mathbf{x} \in \R^k : \mathbf{x} \in \Delta(R_j, \U_j) \text{ for infinitely many } j \in \N\}.\]

In \cite{BV Slicing}, the following was established.

\begin{theorem BV1*}
Let $\cR$ and $\U$ be as given above. Let $V$ be a linear subspace of $\R^k$ such that $\dim V = m  = \text{\textnormal{codim }} \cR$,
\begin{enumerate}
\item[{\rm(i)}]{$V \cap R_j \neq \emptyset$ for all $j \in \N$, and}
\item[{\rm(ii)}]{$\sup_{j \in \N}{\operatorname{diam}(V \cap \Delta(R_j,1))} < \infty$.}
\end{enumerate}
 Let $f$ and $g : r \to g(r) := r^{-l}f(r)$ be dimension functions such that $r^{-k}f(r)$ is monotonic and let $\Om$ be a ball in $\R^k$. Suppose that for any ball $B$ in $\Om$
\[\cH^k\left(B \cap \La\left(g(\U)^{\frac{1}{m}}\right)\right) = \cH^k(B).\]
Then, for any ball $B$ in $\Om$
\[\cH^f(B \cap \La(\U)) = \cH^f(B).\]
\end{theorem BV1*}

\begin{remark*}
In the case that $l = 0$ and $\Omega=\R^k$, Theorem BV1 coincides with the Mass Transference Principle stated above.
\end{remark*}

The conditions (i) and (ii) in Theorem BV1 arise as a consequence of the particular proof strategy employed in \cite{BV Slicing}. However, it was conjectured \cite[Conjecture E]{BBDV ref} that Theorem BV1 should be true without conditions (i) and (ii). By adopting a different proof strategy --- one similar to that used to prove the Mass Transference Principle in \cite{BV MTP} rather than ``slicing" --- we are able to remove conditions (i) and (ii) and, consequently, prove the following.

\begin{theorem} \label{mtp for linear forms theorem}
Let $\cR$ and $\U$ be as given above. Let $f$ and $g : r \to g(r) := r^{-l}f(r)$ be dimension functions such that $r^{-k}f(r)$ is monotonic and let $\Om$ be a ball in $\R^k$. Suppose that for any ball $B$ in $\Om$
\begin{equation}\label{w1}
\cH^k\left(B \cap \La\left(g(\U)^{\frac{1}{m}}\right)\right) = \cH^k(B).
\end{equation}
Then, for any ball $B$ in $\Om$
\begin{equation}\label{w2}
\cH^f(B \cap \La(\U)) = \cH^f(B).
\end{equation}
\end{theorem}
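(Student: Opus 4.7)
The plan is to adapt the proof of the original Mass Transference Principle in \cite{BV MTP}, building a Cantor-like subset of $B\cap\La(\U)$ directly in $\R^k$ and applying the Mass Distribution Principle. This sidesteps the slicing step used in \cite{BV Slicing}, which required a single global $m$-plane $V$ meeting every $R_j$ nicely, and is what forces conditions (i) and (ii) in Theorem BV1. As a sanity check, the $l=0$ case of the construction below reduces to the original Cantor construction of \cite{BV MTP}.

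Fix a ball $B\subset\Om$. By the Mass Distribution Principle, to obtain \eqref{w2} it suffices to exhibit a compact set $K\subset B\cap\La(\U)$ supporting a non-zero finite Borel measure $\mu$ such that $\mu(A)\le C\,f(\sigma)$ for every ball $A$ of sufficiently small radius $\sigma$, with $C$ depending only on $k$ and the norm. The set $K$ will be $\bigcap_{n}\bigcup_{i}K_{n,i}$, where each generation-$n$ cell $K_{n,i}$ is a closed ball $B(y_{n,i},\U_{j(n,i)})$ with $y_{n,i}\in R_{j(n,i)}$. Such a cell is automatically contained in $\Delta(R_{j(n,i)},\U_{j(n,i)})$, so forcing $j(n,i)\to\infty$ as $n\to\infty$ guarantees $K\subset\La(\U)$.

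To pass from generation $n$ to $n+1$ inside a parent cell $B'=K_{n,i}$ of radius $\rho$, I first invoke hypothesis \eqref{w1}: the set $B'\cap\La(g(\U)^{1/m})$ has full $\cH^k$-measure. A standard Vitali-type covering argument then produces finitely many pairwise disjoint ``thick balls'' $B_t:=B(x_t,g(\U_{j_t})^{1/m})\subset B'$, centred at points $x_t\in R_{j_t}$ with $j_t$ as large as desired, whose combined $\cH^k$-volume is at least a fixed fraction $\kappa$ of $\cH^k(B')$. Inside each $B_t$ I then pack approximately $(g(\U_{j_t})^{1/m}/\U_{j_t})^l$ pairwise disjoint ``thin balls'' $B(y,\U_{j_t})$ with $y\in R_{j_t}\cap B_t$, and these are declared to be the children of $B'$. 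The measure $\mu$ is defined inductively by distributing the mass of each parent cell among its thick balls in proportion to $\cH^k$-volume and then equally among the thin balls inside each thick ball. Using the identity $f(r)=r^{l}g(r)$ one checks that a thin ball of radius $\U$ receives mass comparable to $f(\U)$ up to a universal constant, since a thick ball of radius $g(\U)^{1/m}$ has $\cH^k$-volume $g(\U)^{k/m}$ and contains approximately $(g(\U)^{1/m}/\U)^l$ thin balls, and the quotient $g(\U)^{k/m}/(g(\U)^{1/m}/\U)^l$ simplifies to $g(\U)\,\U^l=f(\U)$.

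The principal technical obstacle is the verification of the Mass Distribution Principle inequality $\mu(A)\le Cf(\sigma)$ for an arbitrary test ball $A$ of radius $\sigma$. One splits into cases according to where $\sigma$ falls relative to the thin scale $\U_{j_t}$ and the thick scale $g(\U_{j_t})^{1/m}$ of the deepest generation met by $A$. The delicate regime is when $\sigma$ lies between these two scales: $A$ may then cross many thin balls aligned along a single plane $R_{j_t}$, and one needs to show that the number of such intersections is at most a constant times $(\sigma/\U_{j_t})^l$, so that, multiplied by the per-thin-ball mass $f(\U_{j_t})$ and combined with the monotonicity of $r^{-k}f(r)$, the total mass of $A$ is at most $Cf(\sigma)$. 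A related technicality is that the indices $j(n,i)$ must be chosen large enough at each level for the scales of successive generations to separate, so that the case analysis at any given $\sigma$ involves only a single generation. Once these estimates are in place, the Mass Distribution Principle yields $\cH^f(K)>0$, and running the construction inside arbitrary sub-balls of $\Om$ upgrades this to the full-measure conclusion \eqref{w2}.
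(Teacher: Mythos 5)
Your construction is essentially the same as the paper's: build a nested Cantor set in $\R^k$ whose cells at each stage are thin balls $B(y,\U_j)$ with $y\in R_j$, obtained by first selecting a disjoint family of thick balls $B(x,g(\U_j)^{1/m})$ centred on $R_j$ with substantial $\cH^k$-mass (the paper's $K_{G,B}$ lemma), then packing each thick ball with $\asymp(g(\U_j)^{1/m}/\U_j)^l$ thin balls (the paper's $\caj$ lemma), distributing mass proportionally, and verifying the Mass Distribution Principle estimate with a case split on the scale of the test ball. You also correctly isolate the delicate regime where a test ball of radius $\sigma$ between the thin and thick scales crosses many thin balls along a single plane $R_j$; the count $\ll(\sigma/\U_j)^l$ and the identity $g(r)^{k/m}/(g(r)^{1/m}/r)^l=r^l g(r)=f(r)$ are exactly the computations the paper makes. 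So the route is the same.

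However, there is a genuine gap in the final step. You conclude $\cH^f(K)>0$ and assert that ``running the construction inside arbitrary sub-balls of $\Om$ upgrades this to the full-measure conclusion.'' That upgrade does not follow. After the preliminary reduction (which you also do not spell out, but which is needed even to make your packing step viable, since $\U_j\ll g(\U_j)^{1/m}$ requires $r^{-k}f(r)\to\infty$), the hard case is precisely the one where $\cH^f(B)=\infty$ for every ball $B$; so \eqref{w2} requires $\cH^f(B\cap\La(\U))=\infty$, and knowing only that every sub-ball meets $\La(\U)$ in a set of positive $\cH^f$-measure does not imply this. A set can have positive but finite $\cH^f$-measure in every ball while $\cH^f$ of balls is infinite, and no zero-full law for $\cH^f$ along $\La(\U)$ is available a priori --- that is what you are trying to prove. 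The paper resolves this by building, for each $\eta>1$, a Cantor set $\K_\eta\subset B\cap\La(\U)$ carrying a probability measure with $\mu(D)\ll V^f(D)/\eta$, whence $\cH^f(B\cap\La(\U))\ge\cH^f(\K_\eta)\gg\eta$, and then lets $\eta\to\infty$. Achieving the extra factor of $\eta$ requires a more elaborate Cantor construction: each local level is subdivided into $l_B$ local sub-levels (with $l_B$ chosen proportional to $\eta$ at the first level, as in Property (P5)), each sub-level contributing a fixed fraction of the parent's mass, which multiplies the denominator in the mass estimate. Your single-pass transition from one generation to the next yields only a fixed constant, hence only $\cH^f>0$; the sub-level mechanism is what is missing.
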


At first glance, conditions (i) and (ii) in Theorem BV1 do not seem particularly restrictive. Indeed, there are a number of interesting consequences of this theorem --- see \cite{BBDV ref, BV Slicing}. However, in the following section we present applications of Theorem~\ref{mtp for linear forms theorem} which may well be out of reach when using Theorem~BV1.
In Sections \ref{preliminaries section} and \ref{KBG lemma section} we establish  necessary preliminaries and some auxiliary lemmas before presenting the full proof of Theorem~\ref{mtp for linear forms theorem} in Section \ref{proof section}.

\section{Some applications of Theorem \ref{mtp for linear forms theorem}}

In this section we highlight merely a few applications of Theorem \ref{mtp for linear forms theorem} which we hope give an idea of the breadth of its consequences.
In \S\ref{KG section} we show that, using Theorem \ref{mtp for linear forms theorem}, with relative ease we are able to remove the last remaining monotonicity condition from a Hausdorff measure analogue of the classical Khintchine--Groshev theorem. We also show how the same outcome may be achieved, albeit with a somewhat longer proof, by using Theorem BV1 instead of Theorem \ref{mtp for linear forms theorem}. In \S\ref{inhomogeneous KG section} we obtain a Hausdorff measure analogue of the inhomogeneous version of the Khintchine--Groshev theorem.

In \S\ref{primitive approximation section} we present Hausdorff measure analogues of some recent results of Dani, Laurent and Nogueira \cite{DLN ref}. They have established Khintchine--Groshev type statements in which the approximating points $(\p,\q)$ are subject to certain primitivity conditions. We obtain the corresponding Hausdorff measure results.
On the way to realising some of the results outlined above, in \S\ref{inhomogeneous KG section} and \S\ref{primitive approximation section} we develop several more general statements which reformulate Theorem \ref{mtp for linear forms theorem} in terms of transferring Lebesgue measure statements to Hausdorff measure statements for very general sets of $\Psi$-approximable points (see Theorems \ref{general statement: psi depending on q}, \ref{general statement: psi depending on |q|} and \ref{general theorem: psi depending on (p,q)}).
The recurring theme throughout this section is that, given more-or-less any Khintchine--Groshev type statement, Theorem~\ref{mtp for linear forms theorem} can be used to establish the corresponding Hausdorff measure result.

\subsection{The Khintchine--Groshev Theorem for Hausdorff measures} \label{KG section}

Let $n \geq 1$ and $m \geq 1$ be integers. Denote by $\mathbb{I}^{nm}$ the unit cube $[0,1]^{nm}$ in $\R^{nm}$. Throughout this section we consider $\R^{nm}$ equipped with the norm $\|\cdot\| : \R^{nm} \to \R$ defined as follows
\begin{equation}\label{norm}
\|\x\| = \sqrt{n}\max_{1 \leq \ell \leq m}{|\x_\ell|_2}
\end{equation}
where $\x = (\x_1, \dots, \x_m)$ with each $\x_\ell$ representing a column vector in $\R^n$ for $1 \leq \ell \leq m$, and $|\cdot|_2$ is the usual Euclidean norm on $\R^n$. The role of the norm \eqref{norm} will become apparent soon, namely through the proof of Theorem~\ref{Khintchine--Groshev Hausdorff analogue theorem} below.

Given a function $\psi : \N \to \R^+$, let $\cA_{n,m}(\psi)$ denote the set of $\x \in \mathbb{I}^{nm}$ such that
\[|\q\x + \p| < \psi(|\q|)\]
for infinitely many $(\p,\q) \in \Z^m \times \Z^n \setminus \{\0\}$. Here, $|\cdot|$ denotes the supremum norm, $\x = (x_{i\ell})$ is regarded as an $n \times m$ matrix and $\q$ is regarded as a row vector. Thus, $\q\x$ represents a point in $\R^m$ given by the system
\[q_1x_{1\ell} + \dots + q_nx_{n\ell} \quad (1 \leq \ell \leq m)\]
of $m$ real linear forms in $n$ variables. We will say that the points in $\cA_{n,m}(\psi)$ are $\psi$-approximable. That $\cA_{n,m}(\psi)$ satisfies an elegant zero-one law in terms of $nm$-dimensional Lebesgue measure when the function $\psi$ is monotonic is the content of the classical Khintchine--Groshev Theorem. We opt to state here a modern version of this result which is best possible (see \cite{BV KG}).

In what follows $|X|$ will denote the $k$-dimensional Lebesgue measure of $X\subset\R^k$.

\begin{theorem BV2*}
Let $\psi : \N \to \R^+$ be an approximating function and let $nm > 1$. Then
$$
|\cA_{n,m}(\psi)| =\left\{
\begin{array}{lcl}
 \displaystyle 0 \quad & \mbox{ \text{if }
 $\sum_{q=1}^{\infty}{q^{n-1}\psi(q)^m} < \infty$,}\\[5ex]
 \displaystyle 1 \quad & \mbox{ \text{if }
 $\sum_{q=1}^{\infty}{q^{n-1}\psi(q)^m} = \infty$.}
\end{array}
\right.
$$
\end{theorem BV2*}

The earliest versions of this theorem were due to Khintchine and Groshev and included various extra constraints including monotonicity of $\psi$. A famous counterexample constructed by Duffin and Schaeffer \cite{Duffin-Schaeffer ref} shows that, while Theorem BV2 also holds when $m=n=1$ and $\psi$ is monotonic, the monotonicity condition cannot be removed when $m = n = 1$ and so it is natural to exclude this situation by letting $nm > 1$. In the latter case, the monotonicity condition has been removed completely, leaving Theorem BV2. That monotonicity may be removed in the case $n=1$ is due to a result of Gallagher and in the case where $n>2$ it is a consequence of a result due to Schmidt. For further details we refer the reader to \cite{BBDV ref} and references therein. The final unnecessary monotonicity condition to be removed was the $n = 2$ case. Formally stated as Conjecture A in \cite{BBDV ref}, this case was resolved in \cite{BV KG}.

Regarding the Hausdorff measure theory we shall show the following.

\begin{theorem} \label{Khintchine--Groshev Hausdorff analogue theorem}
Let $\psi: \N \to \R^+$ be any approximating function and let $nm > 1$. Let $f$ and $g: r \to g(r) := r^{-m(n-1)}f(r)$ be dimension functions such that $r^{-nm}f(r)$ is monotonic. Then,
$$
\cH^f(\cA_{n,m}(\psi)) =\left\{
\begin{array}{lcl}
 \displaystyle 0 \quad & \mbox{ \text{if }
 $\sum_{q=1}^{\infty}{q^{n+m-1}g\left(\frac{\psi(q)}{q}\right)} < \infty$,}\\[5ex]
 \displaystyle \cH^f(\mathbb{I}^{nm}) \quad & \mbox{ \text{if }
 $\sum_{q=1}^{\infty}{q^{n+m-1}g\left(\frac{\psi(q)}{q}\right)} = \infty$.}
\end{array}
\right.
$$
\end{theorem}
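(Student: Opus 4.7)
The plan is to handle the convergence and divergence halves separately. The convergence half will be a direct Hausdorff--Cantelli covering estimate, while the divergence half is the main application of Theorem~\ref{mtp for linear forms theorem}: it will be used to transfer the Lebesgue statement Theorem~BV2 (available without any monotonicity on $\psi$ because $nm>1$) into the required $\cH^f$--statement.

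For the convergence direction, I would write $\cA_{n,m}(\psi)$ as the $\limsup$, over $(\p,\q)\in\Z^m\times(\Z^n\setminus\{\0\})$, of $E_{\p,\q}:=\{\x\in\I^{nm}:|\q\x+\p|<\psi(|\q|)\}$. The role of the norm \eqref{norm} is to realise $E_{\p,\q}$, up to a uniform multiplicative constant, as the neighbourhood at distance $\psi(|\q|)/|\q|$ of the affine plane $R_{\p,\q}:=\{\x:\q\x+\p=\0\}$, which has dimension $l:=m(n-1)$ and codimension $m$ in $\R^{nm}$. For $|\q|=q$ the portion $R_{\p,\q}\cap\I^{nm}$ has bounded diameter and can be covered by $\lesssim r_q^{-l}$ balls of radius $r_q:=\psi(q)/q$, contributing $\lesssim r_q^{-l}f(r_q)=g(r_q)$ to the $\cH^f$--premeasure per plane. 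Summing over the $\asymp q^{n-1}$ admissible $\q$ and $\asymp q^{m}$ admissible $\p$ gives the total bound $\sum_q q^{n+m-1}g(\psi(q)/q)$, which is finite by hypothesis; Hausdorff--Cantelli then yields $\cH^f(\cA_{n,m}(\psi))=0$.

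For the divergence direction, I would apply Theorem~\ref{mtp for linear forms theorem} with $k=nm$, $l=m(n-1)$, codimension $m$, family $\cR=(R_{\p,\q})$ and $\U_{(\p,\q)}$ a suitable constant multiple of $\psi(|\q|)/|\q|$, so that $\La(\U)=\cA_{n,m}(\psi)$. Since $g(r)^{1/m}=r^{-(n-1)}f(r)^{1/m}$, the auxiliary function
\[
\tpsi(q):=q\,g(\psi(q)/q)^{1/m}=q^{n}\psi(q)^{-(n-1)}f(\psi(q)/q)^{1/m}
\]
satisfies $\La\bigl(g(\U)^{1/m}\bigr)=\cA_{n,m}(\tpsi)$ (again up to universal constants) and, crucially,
\[
\sum_{q=1}^\infty q^{n-1}\tpsi(q)^m\;=\;\sum_{q=1}^\infty q^{n+m-1}g(\psi(q)/q)\;=\;\infty.
\]
Theorem~BV2 applied to $\tpsi$ then gives $|\I^{nm}\setminus\cA_{n,m}(\tpsi)|=0$, and since $\cA_{n,m}(\tpsi)$ is invariant under $\Z^{nm}$, this upgrades to full Lebesgue measure in every ball $B\subset\R^{nm}$. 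This is precisely the hypothesis \eqref{w1} of Theorem~\ref{mtp for linear forms theorem} on any ball $\Om\supset\I^{nm}$, and the conclusion \eqref{w2} then delivers $\cH^f(\cA_{n,m}(\psi))=\cH^f(\I^{nm})$.

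The main technical obstacle is verifying the equivalence between $E_{\p,\q}$ and the plane neighbourhood $\Delta(R_{\p,\q},\U_{(\p,\q)})$ with constants uniform in $(\p,\q)$, and the bookkeeping of the resulting constants throughout; the norm \eqref{norm} is engineered for exactly this purpose, so that the distance from $\x$ to $R_{\p,\q}$ reads cleanly as $\sqrt n\,\max_{\ell}|\q\cdot\x_\ell+p_\ell|/|\q|_2$. A secondary nuisance is that $\tpsi(q)$, or indeed $\psi(q)/q$, need not lie in $(0,1]$ for every $q$; this is dispatched by a routine truncation, since those $q$ for which the ratio is bounded away from zero already force $\cA_{n,m}(\psi)$ to contain a trivially full--measure set and contribute only a finite portion of the relevant sum.
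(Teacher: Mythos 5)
Your proposal is correct and follows essentially the same route as the paper's own argument: the convergence half via a standard Hausdorff--Cantelli covering count exactly as in Dickinson--Velani, and the divergence half by feeding Theorem~BV2 (applied to the rescaled function $\theta(q)\asymp q\,g(\psi(q)/q)^{1/m}$) into Theorem~\ref{mtp for linear forms theorem} with $k=nm$, $l=m(n-1)$, planes $R_{\p,\q}$ and $\U_{\p,\q}\asymp\psi(|\q|)/|\q|$. The paper is somewhat more careful than your sketch --- it tracks the $\sqrt n$ factors coming from the norm~\eqref{norm} to get one-sided inclusions $\La(\U)\cap\I^{nm}\subset\cA_{n,m}(\psi)$ and $\cA_{n,m}(\theta)\subset\La(g(\U)^{1/m})$ rather than equalities, and it introduces the restriction $|\p|\le M|\q|$ (with $M$ chosen via~\eqref{M}) so that only finitely many $\p$ occur for each $\q$ --- but these are precisely the bookkeeping points you flagged as the ``main technical obstacle,'' and your plan resolves them in the same way.
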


Theorem \ref{Khintchine--Groshev Hausdorff analogue theorem} is not entirely new and was in fact previously obtained in \cite{BBDV ref} via Theorem~BV1 subject to $\psi$ being monotonic in the case that $n = 2$. The deduction there was relying on a theorem of Sprind\v zuk rather than Theorem~BV2 (which is what we shall use). In fact, with several additional assumptions imposed on $\psi$ and $f$, the result was first obtained by Dickinson and Velani \cite{DV97}.
Indeed, the proof of the convergence case of Theorem~\ref{Khintchine--Groshev Hausdorff analogue theorem} makes use of standard covering arguments that, with little adjustment, can be drawn from \cite{DV97}.

In what follows we shall give two proofs for the divergence case of Theorem~\ref{Khintchine--Groshev Hausdorff analogue theorem}, one using Theorem~BV1 and the other using Theorem~\ref{mtp for linear forms theorem}. The reason for this is to show the advantage of using Theorem~\ref{mtp for linear forms theorem} on the one hand, and to explicitly exhibit obstacles in using Theorem~BV1 in other settings on the other hand. In the proofs we will use the following notation.
For $(\p,\q) \in \Z^m \times \Z^n \setminus \{\0\}$ let
\[R_{\p,\q} := \{\x \in \R^{nm}: \q\x + \p = \0\}.\]
Note that, throughout the proofs of Theorem~\ref{Khintchine--Groshev Hausdorff analogue theorem}, $(\p,\q)$ will play the role of the index $j$ appearing in Theorem~BV1 and Theorem~\ref{mtp for linear forms theorem}.
Also note that for $\delta \geq 0$ we have
\[\Delta(R_{\p,\q},\delta) = \{\x \in \R^{nm}: \dist(\x, R_{\p,\q}) < \delta\},\]
where
\[\dist(\x,R_{\p,\q}) = \inf_{\y \in R_{\p,\q}}{\|\x-\y\|} = \frac{\sqrt{n}|\q\x+\p|}{|\q|_2}.\]


We note that if $\psi(r)\geq 1$ for infinitely many $r\in\N$, then $\cA_{n,m}(\psi)=\I^{nm}$ and the divergence case of Theorem~\ref{Khintchine--Groshev Hausdorff analogue theorem} is trivial. Hence, without loss of generality we may assume that $\psi(r)\leq 1$ for all $r\in\N$.
First we show how
\begin{equation}\label{s1}
\text{\em Theorem~BV1 and Theorem~BV2 ~imply~ the divergence case of Theorem~\ref{Khintchine--Groshev Hausdorff analogue theorem}.}
\end{equation}
\begin{proof}
Recall that
\begin{equation}\label{div}
\sum_{q=1}^{\infty}{q^{n+m-1}g\left(\frac{\psi(q)}{q}\right)} = \infty.
\end{equation}
To use Theorem BV1 we have to restrict the approximating integer points $\vv q$ in order to meet conditions (i) and (ii) of Theorem~BV1. We will use the same idea as in \cite{BBDV ref}, namely we will impose the requirement that $|\vv q|=|q_K|$ for a fixed $K\in\{1,\dots,n\}$.  Sprind\v{z}uk's theorem that is used in \cite{BBDV ref} allows for the introduction of this requirement almost instantly. Unfortunately, this is not the case when one is using Theorem~BV2 and hence we will need a new argument. For each $1 \leq i \leq n$ define the auxiliary functions $\Psi_i: \Z^n \setminus \{\0\} \to \R^+$ by setting
\[\Psi_i(\q) = \left\{
\begin{array}{lcl}
 \displaystyle \psi(|\q|) \quad & \mbox{ \text{if }
 $|\q|=|q_i|$,}\\[3ex]
 \displaystyle 0 \quad & \mbox{ \text{otherwise}}.
\end{array}
\right.\]
In what follows, similarly to $\cA_{n,m}(\psi)$, we consider sets $\cA_{n,m}(\Psi)$ of points $\x \in \I^{nm}$ such that
$$
|\q\x + \p| < \Psi(\q)
$$
for infinitely many pairs $(\p,\q) \in \Z^m \times \Z^n \setminus \{\0\}$, where $\Psi: \Z^n \setminus \{\0\} \to \R^+$ is a multivariable function.
Since, by definition, $\Psi_i(\q) \leq \psi(|\q|)$ for each $1 \leq i \leq n$ and each $\vv q\in\Z^n\setminus\{\vv 0\}$, it follows that
\begin{equation}\label{f0}
\cA_{n,m}(\Psi_i) \subset \cA_{n,m}(\psi)\qquad\text{for each $1 \leq i \leq n$}.
\end{equation}
By \eqref{f0}, to complete the proof of \eqref{s1}, it is sufficient to show that
\begin{equation}\label{f2}
\cH^f(\cA_{n,m}(\Psi_K)) = \cH^f(\I^{nm})\qquad\text{for some $1\leq K\leq n$.}
\end{equation}
Without loss of generality we will assume that $K=1$. Define
\[S:= \{(\p,\q) \in \Z^m \times \Z^n \setminus \{\0\}: |\q| = |q_1| \text{ and } |\p|\leq M|\q|\},\]
where
\begin{equation}\label{M}
M=\max\Big\{2n,\sup_{r\in \N}\,\frac{2}{\sqrt{n}}\,g\left(\frac{\psi(r)}{r}\right)^{\frac{1}{m}}\Big\}\,.
\end{equation}
Note that since $g$ is increasing and $\psi(r)\le1$, the constant $M$ is finite.
Let
$\U_{\p,\q} := \dfrac{\Psi_1(\q)}{|\q|}$ for each $(\p,\q) \in S$. The purpose for introducing this auxiliary set $S$ will become apparent later.
Now, for each $(\p,\q) \in S$, 
\begin{align*}
\Delta(R_{\p,\q},\U_{\p,\q})\cap\I^{nm} &= \left\{\x \in \I^{nm}: \frac{\sqrt{n}|\q\x+\p|}{|\q|_2} < \frac{\Psi_1(\q)}{|\q|} \right\}\\[2ex]
& = \left\{\x \in \I^{nm}: |\q\x+\p| < \frac{|\q|_2\Psi_1(\q)}{\sqrt{n}|\q|} \right\}\\[2ex]
& \subset \Big\{\x \in \I^{nm}: |\q\x + \p| < \Psi_1(\q)\Big\}\,,
\end{align*}
since $|\q|_2\leq \sqrt{n}\,|\q|$.
It follows that
$\La(\U) \cap \I^{nm} \subset \cA_{n,m}(\Psi_1) \subset \I^{nm}$, where
$$
\La(\U) =\limsup_{(\p,\q) \in S}\Delta(R_{\p,\q},\U_{\p,\q})\,,
$$
and, in taking this limit, $(\p,\q)\in S$ can be arranged in any order.
Therefore, \eqref{f2} will follow on showing that
\begin{equation}\label{f4}
\cH^f(\La(\U) \cap \I^{nm} ) = \cH^f(\I^{nm})\,.
\end{equation}
Showing \eqref{f4} will rely on Theorem~BV1. First of all observe that conditions (i) and (ii) are met with
the $m$-dimensional subspace
\[V := \{\x = (\x_1, \x_2, \dots, \x_m) \in \R^{nm}: x_{i\ell} = 0 \text{ for all } \ell=1,\dots,m \text{ and } i=2,\dots,n\}.\]
Indeed, regarding condition (i), we have that $R_{\p,\q}\cap V$ consists of the single element
$$
\left( \begin{array}{cccc}
-\frac{p_1}{q_1}   & -\frac{p_2}{q_1}    &\dots     & -\frac{p_m}{q_1}   \\
0                  & 0                   & \dots    & 0       \\
\vdots             & \vdots              & \cdots   & \vdots  \\
0                  & 0                   & \dots    & 0       \end{array} \right)\,,
$$
and so is non-empty.
Regarding condition (ii),
for $(\p,\q) \in S$ we have that
\begin{align*}
V\cap\Delta(R_{\p,\q},1) & = \{\x \in V: \dist(\x,R_{\p,\q}) < 1\} \\[3ex]
               & = \left\{\x \in V: \frac{\sqrt{n}|\q\x+\p|}{|\q|_2}<1\right\}\\[3ex]
& = \left\{\x \in \R^{nm}:
\displaystyle\max_{1\leq \ell\leq m}\frac{\sqrt{n}|q_1 x_{1,\ell}+p_\ell|}{|\q|_2}<1\quad\text{and}\quad
x_{i\ell} = 0 \text{ for}~i\not=1
\right\}\\[3ex]
& \subset \left\{\x \in \R^{nm}:
\displaystyle\max_{1\leq \ell\leq m}\left|x_{1,\ell}+\frac{p_\ell}{q_1}\right|<1\quad\text{and}\quad x_{i\ell} = 0 \text{ for}~i\not=1
\right\}
\end{align*}
since $|q_1|=|\vv q|$ and $|\vv q|_2\leq\sqrt n|\vv q|$. Hence $\operatorname{diam}(V \cap \Delta(R_{\p,\q},1))\leq 2\sqrt{n}$ and we are done.

Now let
$\theta: \N \to \R^+$ be given by
$$
\theta(r) = \frac{r}{\sqrt{n}}g\left(\frac{\psi(r)}{r}\right)^{\frac{1}{m}}
$$
and, for each $1 \leq i \leq n$, let $\Theta_i: \Z^n \setminus \{\0\} \to \R^+$ be given by
$$
\Theta_i(\q) = \frac{|\q|}{\sqrt{n}}\,\,g\left(\frac{\Psi_i(\q)}{|\q|}\right)^{\frac{1}{m}}
~~=~~\left\{
\begin{array}{lcl}
 \displaystyle \theta(|\q|) \quad & \mbox{ \text{if }
 $|\q|=|q_i|$,}\\[3ex]
 \displaystyle 0 \quad & \mbox{ \text{otherwise}}.
\end{array}
\right.
$$
Similarly to \eqref{f0}, we have that $\cA_{n,m}(\Theta_i)\subset \cA_{n,m}(\theta)$ for each $1 \leq i \leq n$. Furthermore,
\begin{equation}\label{f1}
 \cA_{n,m}(\theta) = \bigcup_{i=1}^{n}{\cA_{n,m}(\Theta_i)}.
\end{equation}
Indeed, the `$\supseteq$' inclusion follows from the above. To show the converse, note that for any $\x \in \cA_{n,m}(\theta)$ the inequality $|\q\x + \p| < \theta(|\q|)$ is satisfied for infinitely many  $(\p, \q) \in \Z^m \times \Z^n \setminus \{\0\}$. Clearly, for each $\vv q\in\Z^n\setminus\{\vv 0\}$ we have that $\theta(|\q|)=\Theta_i(\vv q)$ for some $1\leq i\leq n$. Therefore, there is a fixed $i\in\{1,\dots,n\}$ such that $|\q\x + \p| < \theta(|\q|)=\Theta_i(\vv q)$ is satisfied for infinitely many  $(\p, \q) \in \Z^m \times \Z^n \setminus \{\0\}$. This means that $\vv x\in {\cA_{n,m}(\Theta_i)}$ for some $i$, thus verifying \eqref{f1}.

Next, observe that, by \eqref{div}, the sum
\[\sum_{q=1}^{\infty}{q^{n-1}\theta(q)^m} = \sum_{q=1}^{\infty}{\frac{q^{n+m-1}}{\sqrt{n}^m}g\left(\frac{\psi(q)}{q}\right)} = \frac{1}{\sqrt{n}^m}\sum_{q=1}^{\infty}{q^{n+m-1}g\left(\frac{\psi(q)}{q}\right)}\]
diverges. Therefore, by Theorem BV2, we have that $|\cA_{n,m}(\theta)| = 1$.
Hence, by \eqref{f1},
there exists some $1 \leq K \leq n$ such that $|\cA_{n,m}(\Theta_K)| > 0$. By the zero-one law
of \cite[Theorem 1]{BV Zero-one}, we know that $|\cA_{n,m}(\Theta_K)| \in \{0,1\}$. Hence,
\begin{equation}\label{f3}
|\cA_{n,m}(\Theta_K)| = 1.
\end{equation}
Without loss of generality we will suppose that $K = 1$, the same as in \eqref{f2}.

Now, using the fact that $|\q| \leq |\q|_2$, for $(\vv p,\vv q)\in S$ we have that
\begin{align*}
\Delta(R_{\p,\q},g(\U_{\p,\q})^{\frac{1}{m}})\cap \I^{nm}
            &=\left\{\x \in \I^{nm}: \frac{\sqrt{n}|\q\x+\p|}{|\q|_2} < g\left(\frac{\Psi_1(\q)}{|\q|}\right)^{\frac{1}{m}}\right\} \\
            &=\left\{\x \in \I^{nm}: |\q\x+\p| < \frac{|\q|_2}{\sqrt{n}}g\left(\frac{\Psi_1(\q)}{|\q|}\right)^{\frac{1}{m}}\right\} \\
            &\supseteq \left\{\x \in \I^{nm}: |\q\x+\p| < \frac{|\q|}{\sqrt{n}}g\left(\frac{\Psi_1(\q)}{|\q|}\right)^{\frac{1}{m}}\right\} \\
            &= \{\x \in \I^{nm}: |\q\x+\p|<\Theta_1(\q)\}.
\end{align*}
Furthermore observe that if $\{\x \in \I^{nm}: |\q\x + \p| < \Theta_1(\q)\} \not=\emptyset$, then
$|\p| \leq M|\q|$ and so $(\p,\q) \in S$.
Therefore,
\[\cA_{n,m}(\Theta_1) \subset \La(g(\U)^{\frac{1}{m}}) \cap \I^{nm} \subset \I^{nm}.\] In particular, $|\La(g(\U)^{\frac{1}{m}}) \cap \I^{nm}| = 1$ and so for any ball $B\subset \I^{nm}$ we have that $\cH^{nm}(\La(g(\U)^{\frac{1}{m}}) \cap B)=\cH^{nm}(B)$. Hence, we may apply Theorem~BV1 with $k = nm$, $l = m(n-1)$ and $m$ to conclude that, for any ball $B \subset \mathbb{I}^{nm}$, we have $\cH^f(B \cap \La(\U)) = \cH^f(B)$. In particular, $\cH^f(\La(\U)\cap\I^{nm}) = \cH^f(\I^{nm})$ and the proof is thus complete.
\end{proof}

We now show how
\begin{equation}\label{s2}
\text{\em Theorem~\ref{mtp for linear forms theorem} and Theorem~BV2 ~imply~ the divergence case of Theorem~\ref{Khintchine--Groshev Hausdorff analogue theorem}.}
\end{equation}

\begin{proof}
As before, we are given the divergence condition \eqref{div}.
For each pair $(\p,\q) \in \Z^m \times \Z^n \setminus \{\0\}$ with $|\p| \leq M|\q|$, where $M$ is given by \eqref{M}, let
$$R_{\p,\q} := \{\x \in \R^{nm} : \q\x + \p = \0\} \quad \text{and} \quad \U_{\p,\q} := \frac{\psi(|\q|)}{|\q|}.
$$
For such pairs $(\p,\q)$ we have that
\begin{align*}
\Delta(R_{\p,\q},\U_{\p,\q})\cap \I^{nm}  &= \left\{\x \in \I^{nm}: \frac{\sqrt{n}|\q\x+\p|}{|\q|_2} < \frac{\psi(|\q|)}{|\q|} \right\} \\[2ex]
&\subset \left\{\x \in \I^{nm}: |\q\x+\p| < \psi(|\q|)\right\}
\end{align*}
since $|\q|_2\leq \sqrt{n} \,|\q|$.
Therefore
\[\La(\U) \cap \I^{nm} \subset \cA_{n,m}(\psi) \subset \I^{nm},\]
where the $\limsup$ is taken over
$(\p,\q) \in \Z^m \times \Z^n \setminus \{\0\}$ with $|\vv p|\le M|\vv q|$.

Therefore, if we could show that $\cH^f(\La(\U) \cap \I^{nm}) = \cH^f(\I^{nm})$ the divergence part of Theorem~\ref{Khintchine--Groshev Hausdorff analogue theorem} would follow. 

Define $\theta: \N \to \R^+$ by 
\[\theta(r) = \frac{r}{\sqrt{n}}g\left(\frac{\psi(r)}{r}\right)^{\frac{1}{m}}\] 
and note that
\begin{align*}
\Delta(R_{\p,\q},g(\U_{\p,\q})^{\frac{1}{m}})\cap\I^{nm}
                               &= \left\{\x \in \I^{nm}: \frac{\sqrt{n}|\q\x+\p|}{|\q|_2} < g\left(\frac{\psi(|\q|)}{|\q|}\right)^{\frac{1}{m}} \right\} \\
                               &= \left\{\x \in \I^{nm}: |\q\x+\p| < \frac{|\q|_2}{\sqrt{n}}g\left(\frac{\psi(|\q|)}{|\q|}\right)^{\frac{1}{m}} \right\} \\
                               &\supseteq \left\{\x \in \I^{nm}: |\q\x+\p| < \frac{|\q|}{\sqrt{n}}g\left(\frac{\psi(|\q|)}{|\q|}\right)^{\frac{1}{m}} \right\} \\
                               &= \left\{\x \in \I^{nm}: |\q\x+\p| < \theta(|\q|) \right\}, 
\end{align*}
where this penultimate inclusion follows since $|\q| < |\q|_2$. Observe that if $\{\x \in \I^{nm}: |\q\x + \p| < \theta(|\q|)\} \not=\emptyset$, then
$|\p| \leq M|\q|$. It follows that
\[\cA_{n,m}(\theta) \subset \La(g(\U)^{\frac{1}{m}}) \cap \I^{nm}.\]

Now, by Theorem BV2 and the divergence condition (\ref{div}), we know that $|\cA_{n,m}(\theta)| = 1$ since
\[\sum_{q=1}^{\infty}{q^{n-1}\theta(q)^m} = \sum_{q=1}^{\infty}{\frac{q^{n+m-1}}{\sqrt{n}^m}g\left(\frac{\psi(q)}{q}\right)} = \infty.\]
Hence $|\La(g(\U)^{\frac{1}{m}}) \cap \I^{nm}| = 1$ and so we may apply Theorem \ref{mtp for linear forms theorem} with $k = nm$, $l = m(n-1)$ and $m$ to conclude that, for any ball $B \subset \mathbb{I}^{nm}$, we have $\cH^f(B \cap \La(\U)) = \cH^f(B)$. In particular, $\cH^f(\La(\U)\cap\I^{nm}) = \cH^f(\I^{nm})$ and the proof is thus complete.
%
\end{proof}

\medskip

\begin{remark}
Note that the proof of \eqref{s2} is not only shorter and simpler than that of \eqref{s1} but it also does not rely on the zero one law \cite[Theorem 1]{BV Zero-one}. This seemingly minor point becomes a substantial obstacle in trying to use the same line of argument as for \eqref{s1} in other settings, for example, in inhomogeneous problems. The point is that, as of now, we do not have an inhomogeneous
zero-one law similar to \cite[Theorem~1]{BV Zero-one} --- see \cite{Felipe} for partial results and further comments. The approach based on using Theorem~\ref{mtp for linear forms theorem}, on the other hand, works with ease in the inhomogeneous and other settings.
\end{remark}

\subsection{Inhomogeneous systems of linear forms} \label{inhomogeneous KG section}

In this section we will be concerned with the inhomogeneous version of the Khintchine--Groshev Theorem presented in the previous subsection.
Given an approximating function $\Psi: \Z^n \setminus \{\0\} \to \R^+$ and a fixed $\y \in \I^m$, we denote by $\cA_{n,m}^{\y}(\Psi)$ the set of $\x \in \I^{nm}$ for which
\[|\q\x + \p - \y| < \Psi(\q)\]
holds for infinitely many $(\p, \q) \in \Z^m \times \Z^n \setminus \{\0\}$.
In the case that $\Psi(\vv q)=\psi(|\vv q|)$ for some function $\psi: \N \to \R^+$ we write $\cA_{n,m}^{\y}(\psi)$ for $\cA_{n,m}^{\y}(\Psi)$.

%

Regarding inhomogeneous Diophantine approximation we have the following statement which can be deduced as a corollary of \cite[Chapter~1, Theorem~15]{Sprindzuk ref}. In the case that $\psi$ is monotonic this statement also follows as a consequence of the ubiquity technique, see \cite[\S12.1]{BDV limsup sets}.


\begin{theorem IHKG*}
Let $m,n \geq 1$ be integers and let $\y \in \I^m$. If $\psi: \N \to \R^+$ is an approximating function which is assumed to be monotonic if $n = 1$ or $n=2$, then
$$
|\cA_{n,m}^{\y}(\psi)| =\left\{
\begin{array}{lcl}
 \displaystyle 0 \quad & \mbox{ \text{if }
 $\sum_{q=1}^{\infty}{q^{n-1}\psi(q)^m} < \infty$,}\\[5ex]
 \displaystyle 1 \quad & \mbox{ \text{if }
 $\sum_{q=1}^{\infty}{q^{n-1}\psi(q)^m} = \infty$.}
\end{array}
\right.
$$
\end{theorem IHKG*}


The following is the Hausdorff measure version of the above statement.

\begin{theorem} \label{Inhomogeneous Khintchine--Groshev Hausdorff analogue theorem}
Let $m,n \geq 1$ be integers, let $\y \in \I^{m}$, and let $\psi: \N \to \R^+$ be an approximating function. Let $f$ and $g: r \to g(r):=r^{-m(n-1)}f(r)$ be dimension functions such that $r^{-nm}f(r)$ is monotonic. In the case that $n = 1$ or $n=2$ suppose also that $\psi$ is monotonically decreasing. Then,
$$
\cH^f(\cA_{n,m}^{\y}(\psi)) =\left\{
\begin{array}{lcl}
 \displaystyle 0 \quad & \mbox{ \text{if }
 $\sum_{q=1}^{\infty}{q^{n+m-1}g\left(\frac{\psi(q)}{q}\right)} < \infty$,}\\[5ex]
 \displaystyle \cH^f(\mathbb{I}^{nm}) \quad & \mbox{ \text{if }
 $\sum_{q=1}^{\infty}{q^{n+m-1}g\left(\frac{\psi(q)}{q}\right)} = \infty$.}
\end{array}
\right.
$$
\end{theorem}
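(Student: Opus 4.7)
The plan is to mirror the proof of the divergence part of Theorem~\ref{Khintchine--Groshev Hausdorff analogue theorem} given in \eqref{s2}, simply replacing Theorem~BV2 by the Inhomogeneous Khintchine--Groshev Theorem. The convergence case follows from a standard Hausdorff--Cantelli covering argument analogous to \cite{DV97}: for each $q$ the set of $\x \in \I^{nm}$ satisfying $|\q\x + \p - \y| < \psi(q)$ for some $\p$ with $|\q|=q$ is a union of $O(q^n)$ tubes around affine planes of codimension $m$, and each tube is covered by $O((\psi(q)/q)^{-m(n-1)})$ balls of radius $\sim\psi(q)/q$, so the total $f$-content of the $\limsup$ is controlled by $\sum q^{n+m-1}g(\psi(q)/q)$, which converges by hypothesis.

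For the divergence case, I would set, for each $(\p,\q) \in \Z^m\times\Z^n \setminus\{\0\}$ with $|\p| \leq M|\q|$ (where $M$ is as in \eqref{M}, enlarged if necessary to absorb $|\y|\leq 1$),
\[
R_{\p,\q}^{\y} := \{\x \in \R^{nm}: \q\x + \p - \y = \0\},\qquad \U_{\p,\q} := \frac{\psi(|\q|)}{|\q|}.
\]
These are $m(n-1)$-dimensional affine planes, and the identity $\dist(\x,R_{\p,\q}^{\y}) = \sqrt{n}\,|\q\x+\p-\y|/|\q|_2$ gives, exactly as in \eqref{s2}, the inclusion $\Delta(R_{\p,\q}^{\y},\U_{\p,\q})\cap\I^{nm} \subset \cA_{n,m}^{\y}(\psi)$, hence $\La(\U)\cap\I^{nm} \subset \cA_{n,m}^{\y}(\psi)$. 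Setting $\theta(r) := (r/\sqrt{n})\,g(\psi(r)/r)^{1/m}$, the analogous sandwich calculation yields $\cA_{n,m}^{\y}(\theta) \subset \La(g(\U)^{1/m})\cap\I^{nm}$, the observation being that if the right-hand set $\{\x\in\I^{nm}: |\q\x+\p-\y|<\theta(|\q|)\}$ is non-empty then $|\p|\leq M|\q|$ by the choice of $M$.

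The main technical obstacle is to verify that the Inhomogeneous Khintchine--Groshev Theorem applies to $\theta$. Since the series $\sum q^{n-1}\theta(q)^m$ coincides up to a constant with the divergent series $\sum q^{n+m-1}g(\psi(q)/q)$, the case $n \geq 3$ is immediate and gives $|\cA_{n,m}^{\y}(\theta)| = 1$. For $n\in\{1,2\}$, however, the inhomogeneous theorem requires $\theta$ to be monotonic, whereas $\theta(q) = (q/\sqrt{n})\,g(\psi(q)/q)^{1/m}$ is a product of an increasing and a decreasing factor. The monotonicity of $\psi$ in this case together with the hypothesis that $r^{-nm}f(r)$ is monotonic can be exploited to show that $\theta(q)/q$ is non-increasing, which suffices to construct a non-increasing minorant $\tilde\theta \leq \theta$ with $\sum q^{n-1}\tilde\theta(q)^m = \infty$ (for instance by a dyadic envelope argument based on the monotonicity of $\psi(q)/q$ and $g$). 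Alternatively, and perhaps more cleanly, one can invoke the multivariable reformulations of the transference given in Theorems~\ref{general statement: psi depending on q}, \ref{general statement: psi depending on |q|} and \ref{general theorem: psi depending on (p,q)}, which bypass the need to manufacture a monotonic auxiliary function.

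Once $|\cA_{n,m}^{\y}(\theta)| = 1$ is established, it follows that $|B \cap \La(g(\U)^{1/m})| = |B|$ for every ball $B \subset \I^{nm}$. Applying Theorem~\ref{mtp for linear forms theorem} with $k = nm$, $l = m(n-1)$, and the chosen dimension functions $f$ and $g$ then yields $\hf(B\cap \La(\U)) = \hf(B)$ for every ball $B\subset\I^{nm}$, and in particular $\hf(\cA_{n,m}^{\y}(\psi)) = \hf(\I^{nm})$, completing the divergence case.
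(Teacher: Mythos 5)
Your approach is the same one the paper takes: replace the homogeneous planes by $R_{\p,\q}^{\y}=\{\x:\q\x+\p-\y=\0\}$, keep $\U_{\p,\q}=\psi(|\q|)/|\q|$, run the sandwich argument of \eqref{s2}, and feed the resulting Lebesgue-full statement for $\theta$ into Theorem~\ref{mtp for linear forms theorem} (equivalently, Theorem~\ref{general statement: psi depending on |q|}). The convergence half via a Hausdorff--Cantelli covering is also exactly what the paper intends.

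You are right, however, to flag the monotonicity of $\theta$ for $n\in\{1,2\}$ as the real sticking point, and neither of your two escape routes closes it. Your ``alternatively, invoke Theorems~\ref{general statement: psi depending on q}, \ref{general statement: psi depending on |q|}, \ref{general theorem: psi depending on (p,q)}'' suggestion does not help: those theorems are pure transference statements whose \emph{hypothesis} is $|\cA_{n,m}^{\y}(\theta)|=1$, i.e.\ precisely the Lebesgue input you are trying to obtain from the Inhomogeneous Khintchine--Groshev Theorem, so they leave the monotonicity demand on $\theta$ untouched. Your primary route --- the monotone minorant --- is the correct idea, but as sketched it rests on an unproven step. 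You correctly observe that $\theta(q)/q=g(\psi(q)/q)^{1/m}$ is non-increasing (since $g$ is a dimension function and $\psi(q)/q$ is non-increasing), but non-increasing $\theta(q)/q$ together with $\sum q^{n-1}\theta(q)^m=\infty$ does \emph{not} by itself produce a non-increasing $\tilde\theta\le\theta$ with $\sum q^{n-1}\tilde\theta(q)^m=\infty$; the obvious candidates ($\inf_{r\le q}\theta(r)$, or a dyadic floor built from $\theta(2^{t+1})$) all require $\theta$ to already be essentially monotone along powers of $2$, which is what you are trying to establish. A bit more structure needs to be extracted, e.g.\ by first discarding the trivial case $\liminf\theta(q)/q>0$ and then exploiting the joint monotonicity of $\psi$ and of $r^{-nm}f(r)=r^{-m}g(r)$ to compare $\theta$ across dyadic blocks, or by appealing to a version of the inhomogeneous Khintchine--Groshev theorem (e.g.\ via ubiquity as in \cite{BDV limsup sets}) whose hypothesis is a regularity condition on $\sum q^{n-1}\theta(q)^m$ rather than pointwise monotonicity of $\theta$. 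To be fair, the paper's own text also suppresses this step (it declares the deduction ``trivial'' and leaves the details out), so the point you raise is a genuine one; but your write-up should not present the dyadic-envelope claim as done, and should drop the misleading suggestion that Theorems~\ref{general statement: psi depending on q}--\ref{general theorem: psi depending on (p,q)} ``bypass'' the issue.
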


The proof of the convergence case of Theorem~\ref{Inhomogeneous Khintchine--Groshev Hausdorff analogue theorem} once again makes use of standard covering arguments. The divergence case is a consequence of  the Inhomogeneous Khintchine--Groshev Theorem and Theorem \ref{mtp for linear forms theorem}. The proof of the divergence case is almost identical to that of \eqref{s2} and we therefore leave the details out. Furthermore, exploiting this same argument a little further, we can use Theorem \ref{mtp for linear forms theorem} to prove the following two more general statements from which both Theorems \ref{Khintchine--Groshev Hausdorff analogue theorem} and \ref{Inhomogeneous Khintchine--Groshev Hausdorff analogue theorem} follow as corollaries. In some sense Theorems~\ref{general statement: psi depending on q} and \ref{general statement: psi depending on |q|} below are reformulations of Theorem \ref{mtp for linear forms theorem} in terms of sets of $\Psi$-approximable (and $\psi$-approximable) points.

\medskip

\begin{theorem} \label{general statement: psi depending on q}
Let $\Psi: \Z^n \setminus \{\0\} \to \R^+$ be an approximating function and let $\y \in \I^m$.
Let $f$ and $g: r \to g(r):=r^{-m(n-1)}f(r)$ be dimension functions such that $r^{-nm}f(r)$ is monotonic. Let $$
\Theta: \Z^n \setminus \{\0\} \to \R^+\qquad\text{be defined by}\qquad \Theta(\q) = |\vv q|\,g\left(\frac{\Psi(\q)}{|\q|}\right)^{\frac{1}{m}}\,.
$$
Then
$$
|\cA_{n,m}^{\y}(\Theta)| = 1\qquad\text{implies}\qquad \cH^f(\cA_{n,m}^{\y}(\Psi)) = \cH^f(\I^{nm}).
$$
\end{theorem}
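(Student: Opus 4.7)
The plan is to adapt the proof of implication \eqref{s2} to the inhomogeneous setting with a general multivariable $\Psi$, using a carefully chosen norm on $\R^{nm}$ so that all factors line up cleanly. I would equip $\R^{nm}$ with the norm
\[
\|\x\| := \max_{1\leq\ell\leq m}\sum_{i=1}^{n} |x_{i\ell}|,
\]
the $\ell^\infty$ over columns of the $\ell^1$ over rows. The duality between $\ell^1$ and $\ell^\infty$ applied column by column gives that, with respect to this norm, the distance from $\x\in\R^{nm}$ to the codimension-$m$ affine plane
\[
R_{\p,\q}^{\y} := \{\x \in \R^{nm} : \q\x + \p - \y = \0\}
\]
is \emph{exactly} $\dist(\x, R_{\p,\q}^{\y}) = |\q\x + \p - \y|/|\q|$, where $|\q|=\max_i|q_i|$ and $|\cdot|$ denotes the sup norm on $\R^m$.

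Without loss of generality, assume $\Psi(\q) \leq |\q|$ for every $\q\in\Z^n\setminus\{\0\}$ (otherwise the $\Psi$-approximation is trivially sufficient to give $\cA_{n,m}^{\y}(\Psi) = \I^{nm}$). Pick $M > 0$ large enough and define
\[
S := \{(\p,\q)\in\Z^m\times\Z^n\setminus\{\0\} : |\p-\y|\leq M|\q|\},
\]
enumerating $S$ as a sequence $j\in\N$ in order of increasing $|\q|$. For $(\p,\q)\in S$, put $\U_{\p,\q} := \Psi(\q)/|\q|$, so that $\U_j\to 0$. Then the distance formula above gives the exact identifications
\[
\Delta\bigl(R_{\p,\q}^{\y},\U_{\p,\q}\bigr) = \bigl\{\x\in\R^{nm} : |\q\x+\p-\y|<\Psi(\q)\bigr\},
\]
\[
\Delta\bigl(R_{\p,\q}^{\y},g(\U_{\p,\q})^{1/m}\bigr) = \bigl\{\x\in\R^{nm} : |\q\x+\p-\y|<\Theta(\q)\bigr\}.
\]
A short check using $|\q\x|\leq n|\q|$ for $\x\in\I^{nm}$ and the bound $\Theta(\q)/|\q|\leq g(1)^{1/m}$ shows that the restriction to $(\p,\q)\in S$ captures every contribution to $\cA_{n,m}^{\y}(\Psi)\cap\I^{nm}$ and to $\cA_{n,m}^{\y}(\Theta)\cap\I^{nm}$, so
\[
\La(\U) \cap \I^{nm} = \cA_{n,m}^{\y}(\Psi) \cap \I^{nm}, \qquad \La\bigl(g(\U)^{1/m}\bigr) \cap \I^{nm} = \cA_{n,m}^{\y}(\Theta) \cap \I^{nm}.
\]

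The hypothesis $|\cA_{n,m}^{\y}(\Theta)|=1$ then yields $|B\cap\La(g(\U)^{1/m})|=|B|$ for every ball $B\subset\I^{nm}$. Taking $\Om$ to be any ball containing $\I^{nm}$ and applying Theorem \ref{mtp for linear forms theorem} with $k = nm$ and $l = m(n-1)$ (matching $\dim R_{\p,\q}^{\y} = l$) gives $\cH^f(B \cap \La(\U)) = \cH^f(B)$ for every ball $B\subset\Om$; covering the interior of $\I^{nm}$ by countably many such balls and using $\La(\U)\cap\I^{nm}\subset\cA_{n,m}^{\y}(\Psi)$ yields $\cH^f(\cA_{n,m}^{\y}(\Psi)) = \cH^f(\I^{nm})$, as required.

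The main technical subtlety, and the point where the argument genuinely departs from the proof of \eqref{s2}, is the choice of norm. Under the norm \eqref{norm} used earlier, factors of $\sqrt n$ enter both the $\Psi$-side and the $\Theta$-side of the identifications and cannot be absorbed simultaneously without either a case split on the monotonicity direction of $g(r)/r^m=r^{-nm}f(r)$ or an extra constant in the definition of $\Theta$. The $\ell^\infty$-$\ell^1$ column norm above removes this obstruction and matches the statement of the theorem exactly.
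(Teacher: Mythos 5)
Your key observation --- that with the norm $\|\x\| = \max_{1\leq\ell\leq m}\sum_{i=1}^n|x_{i\ell}|$ the distance from $\x$ to the affine plane $\{\x:\q\x+\p-\y=\0\}$ is \emph{exactly} $|\q\x+\p-\y|/|\q|$ --- is correct (the min--max decouples column by column and $\ell^1$--$\ell^\infty$ duality applies in each column) and it is a genuine simplification. The paper works throughout with the norm \eqref{norm}, under which only one-sided inclusions hold between $\Delta(R_{\p,\q},\cdot)\cap\I^{nm}$ and the approximation sets, so the argument has to absorb a spurious factor $\tfrac{1}{2\sqrt n}$ in front of $\Theta$ via \cite[Lemma~4]{BV Zero-one}. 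Your norm removes that step entirely. (The paper also proves Theorem~\ref{general statement: psi depending on q} by reduction to the more general Theorem~\ref{general theorem: psi depending on (p,q)}; your argument is direct, but the underlying mechanism is otherwise the same as the proof of \eqref{s2}.)

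Two errors need fixing. First, your WLOG assumption ``$\Psi(\q)\leq|\q|$ for every $\q$'' is both insufficiently strong and incorrectly justified: having a single $\q$ with $\Psi(\q)>|\q|$ does not force $\cA_{n,m}^{\y}(\Psi)=\I^{nm}$, and even granting $\Psi(\q)\leq|\q|$ for all $\q$ one cannot conclude $\U_j\to 0$ (take $\Psi(\q)=|\q|/2$), which is needed both to apply Theorem~\ref{mtp for linear forms theorem} and for your bound $\Theta(\q)/|\q|\leq g(1)^{1/m}$. The correct reduction, as in the paper, is: if $\Psi(\q)\geq 1$ for infinitely many $\q$ then $\cA_{n,m}^{\y}(\Psi)=\I^{nm}$ and there is nothing to prove; otherwise $\Psi(\q)<1$ for all but finitely many $\q$, and hence $\Psi(\q)/|\q|<1/|\q|\to 0$. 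Second, you propose to take $\Om$ to be a ball \emph{containing} $\I^{nm}$, but the hypothesis of Theorem~\ref{mtp for linear forms theorem} must be verified for \emph{every} ball $B\subset\Om$, while you have only established $\cH^{nm}\bigl(B\cap\La(g(\U)^{1/m})\bigr)=\cH^{nm}(B)$ for $B\subset\I^{nm}$; for a ball $B\subset\Om$ that strays outside $\I^{nm}$ the hypothesis is unverified, so the theorem cannot be invoked with that $\Om$. The fix is to apply Theorem~\ref{mtp for linear forms theorem} with $\Om=B$ for each ball $B\subset\I^{nm}$ separately (the hypothesis then refers only to sub-balls of $B\subset\I^{nm}$ and is satisfied), exactly as is implicit in the paper's proof of \eqref{s2}, and then conclude via the covering argument you gesture at.
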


The following statement is a special case of Theorem~\ref{general statement: psi depending on q} with
$\Psi(\q) := \psi(|\q|)$.

\begin{theorem} \label{general statement: psi depending on |q|}
Let $\psi: \N \to \R^+$ be an approximating function, let $\y \in \I^m$ and let $f$ and $g: r \to g(r) := r^{-m(n-1)}f(r)$ be dimension functions such that $r^{-nm}f(r)$ is monotonic. Let
$$
\theta: \N \to \R^+\qquad\text{be defined by}\qquad \theta(r)= r\,g\left(\frac{\psi(r)}{r}\right)^{\frac{1}{m}}\,.
$$
Then
$$
|\cA_{n,m}^{\y}(\theta)| = 1\qquad\text{implies}\qquad \cH^f(\cA_{n,m}^{\y}(\psi)) = \cH^f(\I^{nm}).
$$
\end{theorem}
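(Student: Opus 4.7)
The plan is to deduce Theorem \ref{general statement: psi depending on |q|} as an essentially instantaneous specialization of Theorem \ref{general statement: psi depending on q}, which is precisely the route advertised by the sentence preceding the statement. First I would introduce the multivariable approximating function $\Psi: \Z^n \setminus \{\0\} \to \R^+$ defined by $\Psi(\q) := \psi(|\q|)$. With this choice, the condition $|\q\x + \p - \y| < \Psi(\q)$ appearing in the definition of $\cA_{n,m}^{\y}(\Psi)$ is literally the condition $|\q\x + \p - \y| < \psi(|\q|)$, so $\cA_{n,m}^{\y}(\Psi) = \cA_{n,m}^{\y}(\psi)$ by definition.

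Next I would compute the auxiliary function $\Theta$ produced by Theorem \ref{general statement: psi depending on q} from this choice of $\Psi$. By direct substitution,
\[
\Theta(\q) \;=\; |\q|\,g\!\left(\frac{\Psi(\q)}{|\q|}\right)^{\!\frac{1}{m}} \;=\; |\q|\,g\!\left(\frac{\psi(|\q|)}{|\q|}\right)^{\!\frac{1}{m}} \;=\; \theta(|\q|),
\]
so that $\cA_{n,m}^{\y}(\Theta) = \cA_{n,m}^{\y}(\theta)$. Hence the Lebesgue hypothesis $|\cA_{n,m}^{\y}(\theta)| = 1$ translates into $|\cA_{n,m}^{\y}(\Theta)| = 1$, and invoking Theorem \ref{general statement: psi depending on q} gives $\cH^f(\cA_{n,m}^{\y}(\Psi)) = \cH^f(\I^{nm})$, which is exactly $\cH^f(\cA_{n,m}^{\y}(\psi)) = \cH^f(\I^{nm})$.

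There is no substantive obstacle in this deduction: it is a purely notational reduction, and all of the geometric and measure-theoretic work has been absorbed into the proof of Theorem \ref{general statement: psi depending on q} (which in turn is a repackaging of Theorem \ref{mtp for linear forms theorem} in $\Psi$-approximation language, along the lines of the argument used for the implication \eqref{s2}). If one wished to avoid the appeal to Theorem \ref{general statement: psi depending on q} and instead prove Theorem \ref{general statement: psi depending on |q|} directly, one could simply imitate the proof of \eqref{s2}, taking the planes $R_{\p,\q} := \{\x \in \R^{nm} : \q\x + \p = \y\}$ and the radii $\U_{\p,\q} := \psi(|\q|)/|\q|$ for $(\p,\q)$ restricted by $|\p| \leq M|\q|$ with $M$ given by the analogue of \eqref{M}. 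The containments established there via the norm \eqref{norm} and the inequalities $|\q| \leq |\q|_2 \leq \sqrt{n}\,|\q|$ carry over verbatim to the inhomogeneous setting, yielding $\La(\U) \cap \I^{nm} \subset \cA_{n,m}^{\y}(\psi)$ and $\cA_{n,m}^{\y}(\theta) \subset \La(g(\U)^{1/m}) \cap \I^{nm}$, after which Theorem \ref{mtp for linear forms theorem} with $k = nm$, $l = m(n-1)$ transfers the Lebesgue full-measure statement to the required Hausdorff $f$-measure statement.
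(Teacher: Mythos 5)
Your proposal is correct and matches the paper exactly: the paper explicitly states that Theorem~\ref{general statement: psi depending on |q|} is the special case of Theorem~\ref{general statement: psi depending on q} obtained by taking $\Psi(\q) := \psi(|\q|)$, which is precisely the notational reduction you carry out. Your added remark about a direct proof mimicking \eqref{s2} with the inhomogeneous planes $R_{\p,\q} = \{\x : \q\x + \p = \y\}$ is a harmless supplementary observation and does not change the assessment.
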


The proof of Theorem~\ref{general statement: psi depending on q} is similar to that of \eqref{s2}. We shall explicitly deduce it from the even more general result of \S\ref{primitive approximation section}, where the approximating function will be allowed to depend on $\vv p$ as well as $\vv q$.
Theorem~\ref{Inhomogeneous Khintchine--Groshev Hausdorff analogue theorem} now trivially follows on combining
the Inhomogeneous Khintchine--Groshev Theorem with Theorem~\ref{general statement: psi depending on |q|}. Furthermore, any progress in removing the monotonicity constraint on $\psi$ from the Inhomogeneous Khintchine--Groshev Theorem can be instantly transferred into a Hausdorff measure statement upon applying Theorem~\ref{general statement: psi depending on |q|}. Indeed, we suspect that a full inhomogeneous analogue of Theorem~BV2 must be true. Recall that it is open only in the case when $n=1$ or $n=2$.

\subsection{Approximation by primitive points and more} \label{primitive approximation section}

The key goal of this section is to present Hausdorff measure analogues of some recent results obtained by Dani, Laurent and Nogueira in \cite{DLN ref}. The setup they consider assumes certain coprimality conditions on the $(m+n)$-tuple $(q_1,\dots,q_n,p_1,\dots,p_m)$ of approximating integers. To achieve our goal we will first prove a very general statement which further extends Theorems \ref{general statement: psi depending on q} and \ref{general statement: psi depending on |q|} and is of independent interest. In particular, we will allow for the approximating function to depend on $(\p,\q)$ and will also introduce a `distortion' parameter $\Phi$ that allows certain flexibility within our framework. This allows us, for example, to incorporate the so-called `absolute value theory' \cite{D93,HK13,HL13}.

Within this section $\Psi: \Z^m \times \Z^n \setminus \{\0\} \to \R^+$ will be a function of $(\p,\q)$, $\y \in \I^m$ will be a fixed point and $\Phi \in \I^{mm}$ will be a fixed $m\times m$ square matrix. Further, define $\cM_{n,m}^{\y,\Phi}(\Psi)$ to be the set of $\x \in \I^{nm}$ such that
\[|\q\x + \p\Phi - \y| < \Psi(\p,\q)\]
holds for $(\p,\q) \in \Z^m\times\Z^n \setminus \{\0\}$ with arbitrarily large $|\q|$. Based upon Theorem~\ref{mtp for linear forms theorem} we now state and prove the following generalisation of Theorems~\ref{general statement: psi depending on q} and \ref{general statement: psi depending on |q|}.

\begin{theorem} \label{general theorem: psi depending on (p,q)}
Let $\Psi: \Z^m \times \Z^n \setminus \{\0\} \to \R^+$ be such that
\begin{align} \label{general theorem monotonicity condition}
\lim_{|\q| \to \infty}~\sup_{\p\in\Z^m}{\frac{\Psi(\p,\q)}{|\q|}} = 0\,,
\end{align}
and let $\y \in \I^m$ and $\Phi \in \I^{mm} \setminus \{\0\}$ be fixed. Let $f$ and $g: r \to g(r):=r^{-m(n-1)}f(r)$ be dimension functions such that $r^{-nm}f(r)$ is monotonic. Let
$$
\Theta: \Z^m \times \Z^n \setminus \{\0\} \to \R^+\qquad\text{be defined by}\qquad \Theta(\p,\q) = |\q|\,g\left(\frac{\Psi(\p,\q)}{|\q|}\right)^{\frac{1}{m}}.
$$
Then
$$
|\cM_{n,m}^{\y, \Phi}(\Theta)| = 1\qquad\text{implies}\qquad\cH^f(\cM_{n,m}^{\y,\Phi}(\Psi)) = \cH^f(\I^{nm})\,.
$$
\end{theorem}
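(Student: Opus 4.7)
The plan is to imitate the proof of implication \eqref{s2}, recasting $\cM_{n,m}^{\y,\Phi}(\Psi)$ as a $\limsup$ set to which Theorem~\ref{mtp for linear forms theorem} applies. To each $(\p,\q)\in \Z^m\times(\Z^n\setminus\{\vv 0\})$ I associate the affine plane
\[
R_{\p,\q} := \{\x \in \R^{nm} : \q\x+\p\Phi-\y=\vv 0\},
\]
which has dimension $l = m(n-1)$ in $\R^{nm}$; with $k = nm$ this matches the setting of Theorem~\ref{mtp for linear forms theorem}. I equip $\R^{nm}$ with the norm $\|\x\| := \max_{1\leq\ell\leq m}|\x_\ell|_2$, where $\x_\ell$ denotes the $\ell$-th column of $\x$; the standard distance-to-plane calculation gives $\dist(\x,R_{\p,\q}) = |\q\x+\p\Phi-\y|/|\q|_2$. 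Because $\y \in \I^m$, $\Phi\in\I^{mm}$ and $\Psi(\p,\q) = o(|\q|)$ uniformly in $\p$ by \eqref{general theorem monotonicity condition}, the only pairs $(\p,\q)$ with $|\q|$ large that can contribute to either $\cM_{n,m}^{\y,\Phi}(\Psi)$ or $\cM_{n,m}^{\y,\Phi}(\Theta)$ via a non-empty approximation set in $\I^{nm}$ satisfy $|\p\Phi|\leq M|\q|$ for some fixed $M>0$; I restrict to the countable family $S$ of such pairs, enumerated so that $|\q_j|\to\infty$, and set $\U_{\p,\q} := \Psi(\p,\q)/|\q|_2$. Hypothesis \eqref{general theorem monotonicity condition} then ensures $\U_j\to0$ as required by Theorem~\ref{mtp for linear forms theorem}.

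The distance formula immediately yields
\[
\Delta(R_{\p,\q},\U_{\p,\q})\cap\I^{nm} = \{\x\in\I^{nm} : |\q\x+\p\Phi-\y| < \Psi(\p,\q)\},
\]
so at the $\limsup$ level $\La(\U)\cap\I^{nm}\subset\cM_{n,m}^{\y,\Phi}(\Psi)$. Similarly,
\[
\Delta(R_{\p,\q},g(\U_{\p,\q})^{1/m})\cap\I^{nm} = \{\x\in\I^{nm} : |\q\x+\p\Phi-\y| < |\q|_2\, g(\Psi(\p,\q)/|\q|_2)^{1/m}\}.
\]
The key numerical step is the comparison
\[
|\q|_2\, g(\Psi(\p,\q)/|\q|_2)^{1/m} \geq |\q|\, g(\Psi(\p,\q)/|\q|)^{1/m} = \Theta(\p,\q);
\]
after substituting $g(r)=r^{-l}f(r)$ with $l=m(n-1)$, both sides become $\Psi(\p,\q)\cdot(t^{-nm}f(t))^{1/m}$ evaluated at $t=\Psi(\p,\q)/|\q|_2$ and $t=\Psi(\p,\q)/|\q|$ respectively, so the inequality reduces directly to the monotonicity of $r^{-nm}f(r)$ in the relevant non-increasing direction. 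Consequently $\cM_{n,m}^{\y,\Phi}(\Theta)\subset\La(g(\U)^{1/m})$.

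The hypothesis $|\cM_{n,m}^{\y,\Phi}(\Theta)|=1$ now delivers $|\La(g(\U)^{1/m})\cap B|=|B|$ for every ball $B\subset\I^{nm}$. Applying Theorem~\ref{mtp for linear forms theorem} with $\Om$ a ball containing $\I^{nm}$, $k=nm$, $l=m(n-1)$ and the above data yields $\cH^f(\La(\U)\cap B)=\cH^f(B)$ for every such $B$, which combined with $\La(\U)\cap\I^{nm}\subset\cM_{n,m}^{\y,\Phi}(\Psi)$ gives $\cH^f(\cM_{n,m}^{\y,\Phi}(\Psi))=\cH^f(\I^{nm})$. The principal technical subtlety is the numerical comparison highlighted above: the norm on $\R^{nm}$ is chosen deliberately without the $\sqrt n$ factor used in Section~\ref{KG section}, precisely so that this inequality drops out cleanly from the monotonicity hypothesis alone, with no residual constant that would need to be absorbed into the definition of $\Theta$.
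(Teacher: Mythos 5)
Your approach has the same overall architecture as the paper's: encode $\cM_{n,m}^{\y,\Phi}$ as a $\limsup$ over affine planes $R_{\p,\q}$ and apply Theorem~\ref{mtp for linear forms theorem}. The algebraic observation that $|\q|_2\,g(\Psi/|\q|_2)^{1/m}\geq |\q|\,g(\Psi/|\q|)^{1/m}$ whenever $r^{-nm}f(r)$ is non-increasing is a genuinely cleaner way to absorb the norm constant than the paper's route, which replaces $\Theta$ by $\tfrac{1}{2\sqrt n}\Theta$ and then invokes the scaling lemma \cite[Lemma~4]{BV Zero-one}. Two things need attention, one minor and one substantive. The minor one: the hypothesis only says $r^{-nm}f(r)$ is monotonic, and if it is non-decreasing your key inequality reverses; that case must be dispatched separately by the usual reduction (either $\cH^f$ vanishes identically, or $\cH^f\asymp\cH^{nm}$ and $\Theta\asymp\Psi$, so the Lebesgue hypothesis already finishes the job). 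You gesture at this with ``in the relevant non-increasing direction'' but it needs to be said.

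The substantive gap concerns singular $\Phi$, which the theorem explicitly allows and which is precisely what is needed for the Hussain--Kristensen application, where $\Phi=\mathrm{diag}(I_u,0)$. You claim $S=\{(\p,\q):|\p\Phi|\le M|\q|\}$ can be enumerated with $|\q_j|\to\infty$ and hence $\U_j\to 0$. If $\ker\Phi\neq\{\0\}$ this is false: for each fixed $\q$ the set $\{\p\in\R^m:|\p\Phi|\le M|\q|\}$ is an unbounded convex symmetric slab of infinite volume, hence contains infinitely many lattice points, so $S$ contains infinitely many pairs sharing the same $\q$ and no enumeration satisfies $|\q_j|\to\infty$. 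Moreover \eqref{general theorem monotonicity condition} only controls $\sup_{\p}\Psi(\p,\q)/|\q|$ in the limit $|\q|\to\infty$, so $\Psi(\p,\q)$ need not decay in $\p$ at fixed $\q$; thus $\U_j\to 0$ can genuinely fail and Theorem~\ref{mtp for linear forms theorem} is not applicable. The paper resolves exactly this by passing from $S$ to a subset $S_\Phi$ that contains one representative $\p$ per value of $(\p\Phi,\q)$, chosen so that $\Theta(\p',\q)\le 2\Theta(\p,\q)$ for every other $\p'$ in the same fibre. This makes the index set locally finite in $\q$ (so $\U_j\to 0$) while still yielding the inclusion $\cM_{n,m}^{\y,\Phi}(\tfrac{1}{2\sqrt n}\Theta)\subset\La(g(\U)^{1/m})$, at the cost of the factor $\tfrac12$ that you were trying to avoid. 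That selection step --- not the choice of norm --- is the idea your proposal is missing.
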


\medskip

\begin{proof}
Let
\begin{equation}\label{MM}
M:=\max\Big\{3n,\sup_{(\p,\q) \in \Z^m \times \Z^n \setminus \{\0\}}\,\frac{3\Theta(\vv p,\vv q)}{\sqrt{n}|\vv q|}\Big\}\,.
\end{equation}
By the monotonicity of $g$ and condition \eqref{general theorem monotonicity condition}, we have that $M$ is finite. Let
$$
S:=\{(\p,\q)\in\Z^m\times \Z^n \setminus \{\0\}:|\vv p\Phi|\le M|\vv q|\}
$$
and let $S_\Phi$ be any fixed subset of $S$ such that for each $(\p',\q)\in S$ there exists $(\p,\q)\in S_\Phi$ such that
\begin{equation}\label{v}
\p\Phi=\p'\Phi\qquad\text{ and }\qquad\Theta(\p',\q)\le 2\Theta(\p,\q).
\end{equation}
Furthermore, let $S_{\Phi}$ be such that for all $(\p,\q),(\mathbf{r},\mathbf{s}) \in S_{\Phi}$ we have
\[(\p\Phi,\q) \neq (\mathbf{r}\Phi,\mathbf{s}) \qquad \text{if} \qquad (\p,\q) \neq (\mathbf{r},\mathbf{s}).\]
The existence of $S_{\Phi}$ is easily seen. For each $(\p,\q) \in S_\Phi$, let
\[
R_{\p,\q} := \{\x \in \R^{nm}: \q\x + \p\Phi - \y = 0\}\qquad\text{and}\qquad
\U_{\p,\q} := \frac{\Psi(\p,\q)}{|\q|}\,.
\]
For $(\p,\q)\in S_\Phi$ we have that
\begin{align*}
\Delta(R_{\p,\q},\U_{\p,\q})\cap \I^{nm}  &= \left\{\x \in \I^{nm}: \frac{\sqrt{n}|\q\x + \p\Phi - \y|}{|\q|_2} < \frac{\Psi(\p,\q)}{|\q|} \right\} \\[2ex]
&\subset \left\{\x \in \I^{nm}: |\q\x+\p\Phi-\vv y| < \Psi(\p,\q)\right\}
\end{align*}
since $|\q|_2\leq \sqrt{n} \,|\q|$. Also note that for each $\q\in\Z^n\setminus\{\vv0\}$ there are only finitely many $\p\in\Z^m$ such that $(\p,\q)\in S_\Phi$.
Therefore
\begin{equation}\label{incl}
\La(\U) \cap \I^{nm} \subset \cM_{n,m}^{\y,\Phi}(\Psi) \subset \I^{nm},
\end{equation}
where, when defining $\La(\U)$, the $\limsup$ is taken over $(\p,\q) \in S_\Phi$.
Hence, by \eqref{incl}, it would suffice for us to show that
\[\cH^f(\La(\U) \cap \I^{nm}) = \cH^f(\I^{nm}).\]
Consider $\La(g(\U)^{\frac{1}{m}})$, where the $\limsup$ is again taken over $(\p,\q) \in S_{\Phi}$. Take any $(\p',\q)\in S$ and let $(\p,\q)\in S_\Phi$ satisfy \eqref{v}.
Then, since $|\q| \leq |\q|_2$, we have that
\begin{align*}
\Delta(R_{\p,\q},g(\U_{\p,\q})^{\frac{1}{m}})\cap\I^{nm}
                   &= \left\{\x \in \I^{nm}: \frac{\sqrt{n}|\q\x + \p\Phi - \y|}{|\q|_2} < g\left(\frac{\Psi(\p,\q)}{|\q|}\right)^{\frac{1}{m}} \right\} \\
                   &= \left\{\x \in \I^{nm}: |\q\x + \p\Phi - \y| < \frac{|\q|_2}{\sqrt{n}}g\left(\frac{\Psi(\p,\q)}{|\q|}\right)^{\frac{1}{m}} \right\} \\
                   &\supseteq \left\{\x \in \I^{nm}: |\q\x + \p\Phi - \y| < \frac{|\q|}{\sqrt{n}} g\left(\frac{\Psi(\p,\q)}{|\q|}\right)^{\frac{1}{m}}\right\} \\
                   &= \left\{\x \in \I^{nm}: |\q\x + \p\Phi - \y| < \tfrac{1}{\sqrt n}\Theta(\p,\q)\right\}\\
                   &\supseteq \left\{\x \in \I^{nm}: |\q\x + \p'\Phi - \y| < \tfrac{1}{2\sqrt n}\Theta(\p',\q)\right\}.
\end{align*}
Also observe that if $\left\{\x \in \I^{nm}: |\q\x + \p'\Phi - \y| < \tfrac{1}{2\sqrt n}\Theta(\p',\q)\right\}\not=\emptyset$, then
$|\p'\Phi| \leq M|\q|$. It follows that
\begin{equation}\label{vv}
\cM_{n,m}^{\y,\Phi}(\tfrac{1}{2\sqrt n}\Theta) \subset \La(g(\U)^{\frac{1}{m}}) \subset \I^{nm}.
\end{equation}
Recall, that $|\cM_{n,m}^{\y,\Phi}(\Theta)| = 1$. Furthermore, in view of \cite[Lemma~4]{BV Zero-one}, we have that
$|\cM_{n,m}^{\y,\Phi}(\tfrac1{2\sqrt n}\Theta)| = 1$. Together with \eqref{vv} this implies that
$|\La(g(\U)^{\frac{1}{m}}) \cap \I^{nm}| = 1$. Further, note that, by
\eqref{general theorem monotonicity condition}, $\U_{\p,\q}\to 0$ as $|\q|\to\infty$. Therefore, Theorem~\ref{mtp for linear forms theorem} is applicable with $k = nm$, $l = m(n-1)$ and $m$ and we conclude that for any ball $B \subset \I^{nm}$ we have that $\cH^f(B \cap \La(\U)) = \cH^f(B)$. In particular, this means that $\cH^f(\I^{nm} \cap \La(\U)) = \cH^f(\I^{nm})$, as required.
\end{proof}

\begin{proof}[Proof of Theorem~\ref{general statement: psi depending on q}]
Let $\Psi$ be as in Theorem~\ref{general statement: psi depending on q}. First observe that if $\Psi(\q)\geq 1$ for infinitely many $\q\in\Z^n$, then $\cA_{n,m}^{\y}(\Psi)=\I^{nm}$ and there is nothing to prove. Otherwise we obviously have that $\Psi(\vv q)/|\vv q|\to0$ as $|\vv q|\to\infty$. In this case extending $\Psi$ to be a function of $(\p,\q)$ so that $\Psi(\p,\q):=\Psi(\q)$ and applying Theorem~\ref{general theorem: psi depending on (p,q)} we immediately recover Theorem~\ref{general statement: psi depending on q} from Theorem~\ref{general theorem: psi depending on (p,q)}.
\end{proof}

Theorem~\ref{general theorem: psi depending on (p,q)} can be applied in various situations beyond what has already been discussed above. For example, the divergence results of \cite{HK13} can be obtained by using Theorem~\ref{general theorem: psi depending on (p,q)} with
$$
\Phi:=\left(\begin{array}{cc}
             I_u & 0 \\
             0 & 0 \\
           \end{array}
\right)
$$
where $I_u$ is the identity matrix. In what follows we shall give applications of Theorem~\ref{general theorem: psi depending on (p,q)} in which the dependence of $\Psi$ on both $\p$ and $\q$ becomes particularly useful. Namely, we shall extend the results of Dani, Laurent and Nogueira \cite{DLN ref} to Hausdorff measures.

First we establish some notation. For any $d \geq 2$ let $P(\Z^d)$ be the set of points $\bv = (v_1,\dots,v_d) \in \Z^d$ such that $\gcd(v_1,\dots,v_d) = 1$. For any subset $\sigma = \{i_1,\dots,i_{\nu}\}$ of $\{1,\dots,d\}$ with $\nu \geq 2$ let $P(\sigma)$ be the set of points $\bv \in \Z^d$ such that $\gcd(v_{i_1},\dots,v_{i_{\nu}}) = 1$. Next, given a partition $\pi$ of $\{1,\dots,d\}$ into disjoint subsets $\pi_\ell$ of at least two elements, let $P(\pi)$ be the set of points $\bv \in \Z^d$ such that $\bv \in P(\pi_\ell)$ for all components $\pi_\ell$ of $\pi$.

Given an approximating function $\psi: \N \to \R^+$ and fixed $\Phi \in \I^{mm}$ and $\y \in \I^m$, let $\cM_{n,m}^{\y,\Phi}(\psi)$ be the set of $\x \in \I^{nm}$ such that
\begin{align} \label{M inequality}
|\q\x + \p\Phi - \y| &< \psi(|\q|)
\end{align}
holds for $(\p,\q) \in \Z^m\times\Z^n \setminus \{\0\}$ with arbitrarily large $|\q|$. Also, given a partition $\pi$ of $\{1,\dots,m+n\}$, let $\cM_{n,m}^{\pi,\y,\Phi}(\psi)$ denote the set of $\x \in \I^{nm}$ for which (\ref{M inequality}) is satisfied for $(\p,\q) \in \Z^m \times \Z^n \setminus \{\0\}$ with arbitrarily large $|\q|$ and with $(q_1,\dots,q_n,p_1,\dots,p_m) \in P(\pi)$.
Now specialising Theorem~\ref{general theorem: psi depending on (p,q)} for the approximating function
\[\Psi(\p,\q) := \left\{
\begin{array}{lcl}
 \displaystyle \psi(|\q|) \quad & \mbox{ \text{if }
 $(q_1,\dots,q_n,p_1,\dots,p_m) \in P(\pi)$,}\\[5ex]
 \displaystyle 0 \quad & \mbox{ \text{otherwise,}}
\end{array}
\right.\]
gives the following.

\begin{theorem} \label{general M statement}
Let $\psi: \N \to \R^+$ be an approximating function such that $\frac{\psi(q)}{q} \to 0$ as $q \to \infty$. Let $\pi$ be any partition of $\{1,\dots,m+n\}$ and let $\Phi \in \I^{mm} \setminus \{\0\}$ and $\y \in \I^m$ be fixed. Let $f$ and $g: r \to g(r):=r^{-m(n-1)}f(r)$ be dimension functions such that $r^{-nm}f(r)$ is monotonic and let $\theta: \N \to \R^+$ be defined by $\theta(q) = q\,g\left(\frac{\psi(q)}{q}\right)^{\frac{1}{m}}$. Then
$$
|\cM_{n,m}^{\pi,\y,\Phi}(\theta)| = 1\qquad\text{implies}\qquad\cH^f(\cM_{n,m}^{\pi,\y,\Phi}(\psi)) = \cH^f(\I^{nm}).
$$
\end{theorem}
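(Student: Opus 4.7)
The plan is to deduce this statement as a direct specialization of Theorem~\ref{general theorem: psi depending on (p,q)}, by taking the approximating function $\Psi$ written out in the paragraph preceding the theorem, namely
\[
\Psi(\p,\q) := \begin{cases} \psi(|\q|) & \text{if } (q_1,\dots,q_n,p_1,\dots,p_m) \in P(\pi), \\ 0 & \text{otherwise.} \end{cases}
\]
The strategy has three moving parts: (a) identify the two sets $\cM_{n,m}^{\y,\Phi}(\Psi)$ and $\cM_{n,m}^{\pi,\y,\Phi}(\psi)$; (b) identify the corresponding auxiliary sets coming from the ``upgraded'' functions $\Theta$ and $\theta$; and (c) verify the decay hypothesis \eqref{general theorem monotonicity condition}.

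For (a), I would observe that whenever $(q_1,\dots,q_n,p_1,\dots,p_m) \notin P(\pi)$ one has $\Psi(\p,\q)=0$, so the strict inequality $|\q\x+\p\Phi-\y|<\Psi(\p,\q)$ is vacuous. Any $\x$ for which this inequality holds for $(\p,\q)$ with arbitrarily large $|\q|$ must therefore have infinitely many such pairs lying in $P(\pi)$, and on those pairs the inequality reduces to $|\q\x+\p\Phi-\y|<\psi(|\q|)$. This gives the identification $\cM_{n,m}^{\y,\Phi}(\Psi) = \cM_{n,m}^{\pi,\y,\Phi}(\psi)$. For (b), the associated function prescribed by Theorem~\ref{general theorem: psi depending on (p,q)} evaluates to
\[
\Theta(\p,\q) \;=\; |\q|\, g\!\left(\tfrac{\Psi(\p,\q)}{|\q|}\right)^{\!1/m} \;=\; \begin{cases} \theta(|\q|) & \text{if } (q_1,\dots,q_n,p_1,\dots,p_m) \in P(\pi), \\ 0 & \text{otherwise,} \end{cases}
\]
using the standard convention $g(0)=0$ for a dimension function. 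By the same vacuous-inequality argument, $\cM_{n,m}^{\y,\Phi}(\Theta) = \cM_{n,m}^{\pi,\y,\Phi}(\theta)$, so the hypothesis $|\cM_{n,m}^{\pi,\y,\Phi}(\theta)|=1$ is exactly the Lebesgue-fullness hypothesis required to invoke Theorem~\ref{general theorem: psi depending on (p,q)}.

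For (c), I would simply use the uniform bound $\sup_{\p\in\Z^m}\Psi(\p,\q)/|\q| \leq \psi(|\q|)/|\q|$, which tends to $0$ as $|\q|\to\infty$ by the standing assumption that $\psi(q)/q\to 0$; this establishes \eqref{general theorem monotonicity condition}. The dimension-function hypotheses on $f$ and $g$ are inherited verbatim from the statement. With these pieces in place, Theorem~\ref{general theorem: psi depending on (p,q)} yields $\cH^f(\cM_{n,m}^{\y,\Phi}(\Psi)) = \cH^f(\I^{nm})$, which is exactly the desired conclusion in view of the identification in (a). There is no substantive obstacle here beyond careful bookkeeping; the whole point is that Theorem~\ref{general theorem: psi depending on (p,q)} was designed to admit primitivity constraints by allowing $\Psi$ to depend on both $\p$ and $\q$ (and in particular to vanish off the desired arithmetic locus), so the coprimality conditions of Dani--Laurent--Nogueira slot in without modification.
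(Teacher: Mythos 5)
Your proposal is correct and is exactly the argument the paper has in mind: the paper itself introduces Theorem~\ref{general M statement} with the words ``Now specialising Theorem~\ref{general theorem: psi depending on (p,q)} for the approximating function $\Psi(\p,\q):=\psi(|\q|)$ if $(q_1,\dots,q_n,p_1,\dots,p_m)\in P(\pi)$ and $0$ otherwise, gives the following,'' leaving the bookkeeping implicit. You have filled in precisely that bookkeeping --- the set identifications via the vacuousness of $|\q\x+\p\Phi-\y|<0$, the computation of $\Theta$ using $g(0)=0$, and the verification of condition \eqref{general theorem monotonicity condition} from $\psi(q)/q\to 0$ --- so no further comment is needed.
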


\medskip

Now, let us turn our attention to the results of Dani, Laurent and Nogueira from \cite{DLN ref}. For the moment, we will return to the homogeneous setting.
Given a partition $\pi$ of $\{1,\dots,m+n\}$ and an approximating function $\psi: \N \to \R^+$ we will denote by $\cA_{n,m}^{\pi}(\psi)$ the set of $\x \in \I^{nm}$ such that
\[|\q\x+\p|<\psi(|\q|)\]
holds for $(\p,\q) \in \Z^m \times \Z^n \setminus \{\0\}$ with arbitrarily large $|\q|$ and $(q_1, \dots, q_n, p_1, \dots, p_m) \in P(\pi)$. We note that in this case the inequality holds for $(\p,\q) \in \Z^m \times \Z^n \setminus \{\0\}$ with arbitrarily large $|\q|$ if and only if the inequality holds for infinitely many $(\p,\q) \in \Z^m \times \Z^n \setminus \{\0\}$. The notation $\cA_{n,m}(\psi)$ will be used as defined in \S\ref{KG section}.
The following statement is a consequence of \cite[Theorem 1.2]{DLN ref}.

\begin{theorem DLN1*}
Let $n, m \in \N$ and let $\pi$ be a partition of $\{1,\dots,m+n\}$ such that every component of $\pi$ has at least $m+1$ elements. Let $\psi: \N \to \R^+$ be a function such that the mapping $x \to x^{n-1}\psi(x)^m$ is non-increasing. Then,
$$
|\cA_{n,m}^{\pi}(\psi)| =\left\{
\begin{array}{lcl}
 \displaystyle 0 \quad & \mbox{ \text{if }
 $\sum_{q=1}^{\infty}{q^{n-1}\psi(q)^m} < \infty$,}\\[5ex]
 \displaystyle 1 \quad & \mbox{ \text{if }
 $\sum_{q=1}^{\infty}{q^{n-1}\psi(q)^m} = \infty$.}
\end{array}
\right.
$$
\end{theorem DLN1*}


\noindent The following Hausdorff measure analogue of Theorem DLN1 follows from Theorem \ref{general M statement}.

\begin{theorem} \label{DLN Theorem 1.2 Hausdorff}
Let $n,m \in \N$ and let $\pi$ be a partition of $\{1,\dots,m+n\}$ such that every component of $\pi$ has at least $m+1$ elements. Let $\psi: \N \to \R^+$ be an approximating function. Let $f$ and $g: r \to g(r):=r^{-m(n-1)}f(r)$ be dimension functions such that the function $r^{-nm}f(r)$ is monotonic and $q^{n+m-1}g\left(\frac{\psi(q)}{q}\right)$ is non-increasing. Then,
$$
\cH^f(\cA_{n,m}^{\pi}(\psi)) =\left\{
\begin{array}{lcl}
 \displaystyle 0 \quad & \mbox{ \text{if }
 $\sum_{q=1}^{\infty}{q^{n+m-1}g\left(\frac{\psi(q)}{q}\right)} < \infty$,}\\[5ex]
 \displaystyle \cH^f(\mathbb{I}^{nm}) \quad & \mbox{ \text{if }
 $\sum_{q=1}^{\infty}{q^{n+m-1}g\left(\frac{\psi(q)}{q}\right)} = \infty$.}
\end{array}
\right.
$$
\end{theorem}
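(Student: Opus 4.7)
The proof naturally divides into the convergence and divergence cases. The convergence half is elementary: since the primitivity condition only shrinks the set, we have $\cA_{n,m}^{\pi}(\psi) \subset \cA_{n,m}(\psi)$, and the convergence half of Theorem \ref{Khintchine--Groshev Hausdorff analogue theorem} immediately yields $\cH^f(\cA_{n,m}^{\pi}(\psi)) \leq \cH^f(\cA_{n,m}(\psi)) = 0$. Alternatively, a direct covering argument for the hyperplane-neighborhoods defining $\cA_{n,m}^{\pi}(\psi)$ --- thickenings of $m(n-1)$-dimensional affine subspaces --- recovers the same critical sum $\sum_q q^{n+m-1}g(\psi(q)/q)$ and works equally well.

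For the divergence case, the plan is a two-step reduction: first invoke Theorem DLN1 to obtain a Lebesgue-measure statement for an auxiliary approximating function, then apply Theorem \ref{general M statement} to transfer the conclusion to Hausdorff $f$-measure. Define
\[\theta(q) := q\,g\!\left(\frac{\psi(q)}{q}\right)^{1/m},\]
so that $q^{n-1}\theta(q)^m = q^{n+m-1}g(\psi(q)/q)$. The non-increasing hypothesis on the latter quantity is precisely what Theorem DLN1 requires of $\theta$, and the divergence assumption $\sum_q q^{n+m-1}g(\psi(q)/q) = \infty$ translates verbatim into $\sum_q q^{n-1}\theta(q)^m = \infty$. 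Since every component of $\pi$ has at least $m+1$ elements, Theorem DLN1 delivers $|\cA_{n,m}^{\pi}(\theta)| = 1$. Identifying $\cA_{n,m}^{\pi}(\cdot)$ with $\cM_{n,m}^{\pi,\0,I_m}(\cdot)$, where $I_m$ denotes the $m\times m$ identity matrix and $\y = \0$, this is the same as $|\cM_{n,m}^{\pi,\0,I_m}(\theta)| = 1$, which is the main hypothesis of Theorem \ref{general M statement}.

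The one remaining input required before Theorem \ref{general M statement} can be invoked is the decay condition $\psi(q)/q \to 0$. This comes for free from the monotonicity assumption: a non-increasing sequence is bounded by its first term, so $q^{n+m-1}g(\psi(q)/q) \leq g(\psi(1))$ for all $q \geq 1$, whence $g(\psi(q)/q) \to 0$ and, because $g$ is a dimension function vanishing only at $0$, also $\psi(q)/q \to 0$. With this in hand, Theorem \ref{general M statement} delivers $\cH^f(\cA_{n,m}^{\pi}(\psi)) = \cH^f(\mathbb{I}^{nm})$, completing the divergence case. The only genuinely delicate point is the small piece of bookkeeping needed to confirm that the hypotheses of Theorem DLN1 on $\theta$ and the decay condition for Theorem \ref{general M statement} both follow cleanly from the hypotheses imposed on $\psi$ and $g$; once this is verified, the proof is a transparent composition of two black boxes, with all of the substantive measure-theoretic work having been done upstream by Theorem DLN1 and by Theorem \ref{mtp for linear forms theorem} via Theorem \ref{general M statement}.
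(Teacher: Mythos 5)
Your argument has the same structure as the paper's: inclusion into $\cA_{n,m}(\psi)$ plus Theorem~\ref{Khintchine--Groshev Hausdorff analogue theorem} for the convergence half, then Theorem DLN1 followed by Theorem~\ref{general M statement} with the auxiliary function $\theta(q)=q\,g(\psi(q)/q)^{1/m}$ for the divergence half, and the identification $\cA_{n,m}^{\pi}=\cM_{n,m}^{\pi,\0,I_m}$. That part is correct and matches the paper.

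There is, however, one small but genuine slip in how you establish $\psi(q)/q\to 0$. You deduce $g(\psi(q)/q)\to 0$ from the non-increasing hypothesis (correct), and then conclude $\psi(q)/q\to 0$ ``because $g$ is a dimension function vanishing only at $0$.'' That last claim is not part of the definition: a dimension function is merely left continuous, non-decreasing, and tending to $0$ at the origin, and in particular it is allowed to be identically zero on an interval $[0,\varepsilon]$. If it is, then $g(\psi(q)/q)\to 0$ does not force $\psi(q)/q\to 0$. The paper handles exactly this eventuality: if $\psi(q)/q\nrightarrow 0$, pick $\varepsilon>0$ with $\psi(q)/q\ge\varepsilon$ infinitely often; monotonicity of $g$ then gives $q^{n+m-1}g(\psi(q)/q)\ge q^{n+m-1}g(\varepsilon)$ infinitely often, and since the left-hand side is non-increasing this forces $g(\varepsilon)=0$, hence $g\equiv 0$ and $f\equiv 0$ on $[0,\varepsilon]$, hence $\cH^f(X)=0$ for every $X\subset\I^{nm}$ and the theorem holds trivially. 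So the correct statement is that one may \emph{assume} $\psi(q)/q\to 0$ without loss of generality, not that it follows from $g$ vanishing only at the origin. Replacing your one-line justification with this observation closes the gap and reproduces the paper's proof exactly.
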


\begin{proof}
First note that in the light of the fact that $q^{n+m-1}g\left(\frac{\psi(q)}{q}\right)$ is non-increasing we may assume without loss of generality that $\frac{\psi(q)}{q} \to 0$ as $q \to \infty$. To see this, suppose that $\frac{\psi(q)}{q} \nrightarrow 0$. Therefore, there must exist some $\varepsilon > 0$ such that $\frac{\psi(q)}{q} \geq \varepsilon$ infinitely often. In turn, since $g$ is a dimension function, and hence non-decreasing, this means that $q^{n+m-1}g\left(\frac{\psi(q)}{q}\right) \geq q^{n+m-1}g(\varepsilon)$ infinitely often. However, since this expression is non-increasing, we must have that $g(\varepsilon) = 0$. In particular, this means that $g(r) = 0$ and, hence, also $f(r) = 0$ for all $r \leq \varepsilon$. Thus $\cH^f(X) = 0$ for any $X \subset \I^{nm}$ and so the result is trivially true.

In view of the conditions imposed on $\pi$, we must have that $nm>1$. Furthermore, since $\cA_{n,m}^{\pi}(\psi) \subset \cA_{n,m}(\psi)$, it follows from Theorem \ref{Khintchine--Groshev Hausdorff analogue theorem} that $\cH^f(\cA_{n,m}^{\pi}(\psi)) = 0$ when $\sum_{q=1}^{\infty}{q^{n+m-1}g\left(\frac{\psi(q)}{q}\right)} < \infty$. Alternatively, one can use a standard covering argument to obtain a direct proof of the convergence part of Theorem~\ref{DLN Theorem 1.2 Hausdorff}.

Regarding the divergence case, observe that $\cA_{n,m}^{\pi}(\psi) = \cM_{n,m}^{\pi,\0,I_m}(\psi)$, where $I_m$ represents the $m\times m$ identity matrix. Therefore, if $|\cM_{n,m}^{\pi,\0,I_m}(\theta)| = |\cA_{n,m}^{\pi}(\theta)| = 1$ where $\theta: \N \to \R^+$ is defined by $\theta(q) = q\,g\left(\frac{\psi(q)}{q}\right)^{\frac{1}{m}}$, then it would follow from Theorem~\ref{general M statement} that $\cH^f(\cA_{n,m}^{\pi}(\psi)) = \cH^f(\cM_{n,m}^{\pi,\0,I_m}(\psi)) = \cH^f(\I^{nm})$.

Now, by Theorem DLN1, $|\cA_{n,m}^{\pi}(\theta)| = 1$ if $q \to q^{n-1}\theta(q)^m$ is non-increasing and $\sum_{q=1}^{\infty}{q^{n-1}\theta(q)^m} = \infty$. We have that $q^{n-1}\theta(q)^m = q^{n+m-1}g\left(\frac{\psi(q)}{q}\right)$ which is non-increasing by assumption. By our hypotheses, we also have
\[\sum_{q=1}^{\infty}{q^{n-1}\theta(q)^m} = \sum_{q=1}^{\infty}{q^{n+m-1}g\left(\frac{\psi(q)}{q}\right)} = \infty.\]
Hence the proof is complete.
\end{proof}

\medskip

If $\psi(q) := q^{-\tau}$ for some $\tau > 0$ let us write $\cA_{n,m}^{\pi}(\tau) := \cA_{n,m}^{\pi}(\psi)$. The following result regarding the Hausdorff dimension of $\cA_{n,m}^{\pi}(\tau)$ is a corollary of Theorem \ref{DLN Theorem 1.2 Hausdorff}.

\begin{corollary}
Let $n, m \in \N$ and let $\pi$ be a partition of $\{1,\dots,m+n\}$ such that every component of $\pi$ has at least $m + 1$ elements. Then
\[\dimh(\cA_{n,m}^{\pi}(\tau))
       = \left\{
         \begin{array}{ll}
         m(n-1)+\frac{m+n}{\tau + 1} & \text{when } \;\;\; \tau > \frac{n}{m}\;
         ,\\[3ex]
         nm & \text{when } \;\;\; \tau \leq \frac{n}{m}.
         \end{array}\right.\]
\end{corollary}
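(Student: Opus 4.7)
The plan is to derive both cases of the corollary from Theorem~\ref{DLN Theorem 1.2 Hausdorff} applied with the gauge $f(r)=r^s$, in which case $g(r)=r^{s-m(n-1)}$. With $\psi(q)=q^{-\tau}$ the critical series becomes the power series $\sum_{q}q^{\alpha(s)}$, where
\[\alpha(s):=(n+m-1)-(\tau+1)\bigl(s-m(n-1)\bigr),\]
and the threshold $\alpha(s)=-1$ is reached precisely at $s=m(n-1)+\tfrac{m+n}{\tau+1}$, which is the dimension we aim to prove in the regime $\tau>n/m$.

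For the range $\tau\le n/m$, the set $\cA_{n,m}^{\pi}(\tau)$ is monotonically increasing in $\tau$ (a larger $\psi$ gives a larger set), so it suffices to treat $\tau=n/m$. At this value the classical series $\sum q^{n-1}\psi(q)^m$ is the harmonic series and $q\mapsto q^{-1}$ is non-increasing, so Theorem~DLN1 immediately gives full Lebesgue measure, hence full ambient dimension $nm$.

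Now assume $\tau>n/m$. The upper bound is the easy half: for any $s>m(n-1)+\tfrac{m+n}{\tau+1}$ the series converges (since $\alpha(s)<-1$), and the convergence case of Theorem~\ref{DLN Theorem 1.2 Hausdorff} (whose monotonicity requirement on $r^{-nm}f(r)=r^{s-nm}$ is automatic) yields $\cH^s(\cA_{n,m}^{\pi}(\tau))=0$. Taking the infimum over such $s$ gives $\dimh \cA_{n,m}^{\pi}(\tau)\le m(n-1)+\tfrac{m+n}{\tau+1}$.

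For the lower bound we invoke the divergence half of Theorem~\ref{DLN Theorem 1.2 Hausdorff}, and this is where the one mild obstacle lies: this half carries the extra hypothesis that $q\mapsto q^{n+m-1}g(\psi(q)/q)=q^{\alpha(s)}$ be non-increasing, forcing $\alpha(s)\le 0$, i.e.\ $s\ge m(n-1)+\tfrac{n+m-1}{\tau+1}$. The admissible window $\bigl[\,m(n-1)+\tfrac{n+m-1}{\tau+1},\;m(n-1)+\tfrac{n+m}{\tau+1}\bigr)$ has length $1/(\tau+1)>0$ and abuts the convergence threshold from below. For every $s$ in it the divergence case applies and gives $\cH^s(\cA_{n,m}^{\pi}(\tau))=\cH^s(\I^{nm})$. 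Since $\tau>n/m$ is equivalent to $m(n-1)+\tfrac{m+n}{\tau+1}<nm$, any such $s$ satisfies $s<nm$, whence $\cH^s(\I^{nm})=\infty$ and $\dimh \cA_{n,m}^{\pi}(\tau)\ge s$. Letting $s$ approach the upper endpoint of the window completes the proof.
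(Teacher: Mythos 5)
Your proposal is correct and follows essentially the same route as the paper: apply Theorem~\ref{DLN Theorem 1.2 Hausdorff} with power gauges $f(r)=r^s$, identify the critical exponent $s_0=m(n-1)+\frac{m+n}{\tau+1}$ as the convergence/divergence threshold, check the extra non-increasingness hypothesis for the divergence half, and handle $\tau\le n/m$ by set monotonicity. One tiny slip: $\cA_{n,m}^{\pi}(\tau)$ is monotonically \emph{decreasing} in $\tau$ (larger $\tau$ gives smaller $\psi$), not increasing as you wrote, though your parenthetical remark and the resulting conclusion are both right; also note $s=s_0$ itself already lies in the admissible window (the series $\sum q^{-1}$ diverges and is non-increasing), so the limiting step at the end can be avoided.
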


\begin{proof}
For $\tau \geq \frac{n}{m}$ the result follows on applying Theorem~\ref{DLN Theorem 1.2 Hausdorff} with
$$
f_{\delta}(r) := r^{s_0 + \delta}\qquad\text{where}\qquad s_0 = m(n-1)+\frac{m+n}{\tau + 1}\,.
$$
Indeed, with $\delta$ sufficiently small, all the conditions of Theorem~\ref{DLN Theorem 1.2 Hausdorff} are met and furthermore, as is easily seen, we have from Theorem \ref{DLN Theorem 1.2 Hausdorff} that
$$
\cH^{f_{\delta}}(\cA_{n,m}^{\pi}(\tau)) =\left\{
\begin{array}{lcl}
 \displaystyle 0 \quad & \mbox{ \text{if }
 $\delta > 0$,}\\[3ex]
 \displaystyle \cH^{f_{\delta}}(\mathbb{I}^{nm}) \quad & \mbox{ \text{if }
 $\delta \leq 0$.}
\end{array}
\right.
$$
This means that $\cH^{s_0+\delta}(\cA_{n,m}^{\pi}(\tau)) = 0$ for $\delta > 0$ and $\cH^{s_0+\delta}(\cA_{n,m}^{\pi}(\tau)) = \cH^{s_0+\delta}(\I^{nm})$ for $\delta \leq 0$.
Therefore, if $s_0 \leq nm$ then $\dimh(\cA_{n,m}^{\pi}(\tau)) = s_0$ since, in this case, $\cH^{s_0+\delta}(\I^{nm}) = \infty$ whenever $\delta < 0$. Finally, note that $s_0 \leq nm$ if and only if $\tau \geq \frac{n}{m}$.

In the case where $\tau < \frac{n}{m}$ observe that $\cA_{n,m}^{\pi}(\tau) \supseteq \cA_{n,m}^{\pi}(\frac{n}{m})$ so $\dimh(\cA_{n,m}^{\pi}(\tau)) \geq \dimh(\cA_{n,m}^{\pi}(\frac{n}{m})) = nm$. Combining this with the trivial upper bound gives $\dimh(\cA_{n,m}^{\pi}(\tau)) = nm$ when $\tau < \frac{n}{m}$, as required.
\end{proof}

Next we consider two results of Dani, Laurent and Nogueira regarding inhomogeneous approximation. As before, for a fixed $\y \in \I^m$ we let $\cA_{n,m}^{\y}(\psi)$ denote the set of points $\x \in \I^{nm}$ for which
\begin{align} \label{inhomogeneous inequality}
|\q\x+\p-\y| &< \psi(|\q|)
\end{align}
holds for infinitely many $(\p,\q) \in \Z^m \times \Z^n \setminus \{\0\}$.
Given a partition $\pi$ of $\{1,\dots,m+n\}$, let $\cA_{n,m}^{\pi,\y}(\psi)$ be the set of points $\x \in \I^{nm}$ for which (\ref{inhomogeneous inequality}) holds for infinitely many $(\p,\q) \in \Z^m \times \Z^n \setminus \{\0\}$ with $(q_1,\dots,q_n,p_1,\dots,p_m) \in P(\pi)$.

Rephrasing it in a way which is more suitable for our current purposes, a consequence of \cite[Theorem 1.1]{DLN ref} reads as follows.

\begin{theorem DLN2*}
Let $n, m \in \N$ and let $\pi$ be a partition of $\{1,\dots,m+n\}$ such that every component of $\pi$ has at least $m+1$ elements. Let $\psi: \N \to \R^+$ be a function such that the mapping $x \to x^{n-1}\psi(x)^m$ is non-increasing. Then,
\begin{enumerate}[{\rm(i)}]
\item{if $\sum_{q=1}^{\infty}{q^{n-1}\psi(q)^m} = \infty$ then for almost every $\y \in \I^m$ we have $\left|\cA_{n,m}^{\pi,\y}(\psi)\right|=1$.}
\item{if $\sum_{q=1}^{\infty}{q^{n-1}\psi(q)^m} < \infty$ then for any $\y \in \I^m$ we have $\left|\cA_{n,m}^{\y}(\psi)\right|=0$.}
\end{enumerate}
\end{theorem DLN2*}

The corresponding Hausdorff measure statement we obtain in this case is:

\begin{theorem} \label{DLN Theorem 1.1 Hausdorff}
Let $n,m \in \N$ and let $\pi$ be a partition of $\{1,\dots,m+n\}$ such that every component of $\pi$ has at least $m+1$ elements. Let $\psi: \N \to \R^+$ be an approximating function. Let $f$ and $g: r \to g(r):=r^{-m(n-1)}f(r)$ be dimension functions such that the function $r^{-nm}f(r)$ is monotonic and $q^{n+m-1}g\left(\frac{\psi(q)}{q}\right)$ is non-increasing. Then,
\begin{enumerate}[{\rm(i)}]
\item{if $\sum_{q=1}^{\infty}{q^{n+m-1}g\left(\frac{\psi(q)}{q}\right)}=\infty$ then for Lebesgue almost every $\y \in \I^m$ we have $\cH^f\left(\cA_{n,m}^{\pi,\y}(\psi)\right)=\cH^f(\I^{nm})$.}
\item{if $\sum_{q=1}^{\infty}{q^{n+m-1}g\left(\frac{\psi(q)}{q}\right)}<\infty$ then for any $\y \in \I^m$ we have $\cH^f(\cA_{n,m}^{\y}(\psi))=0$.}
\end{enumerate}
\end{theorem}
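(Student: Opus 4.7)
The proof will split into the convergence case (ii) and divergence case (i), paralleling the argument in the homogeneous setting of Theorem~\ref{DLN Theorem 1.2 Hausdorff}.

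For part (ii), the plan is a standard direct covering argument, noting that translation by $\y$ does not disturb the geometric estimates. Each slab $\{\x \in \I^{nm} : |\q\x + \p - \y| < \psi(|\q|)\}$, with $|\p| \ll |\q|$ (as otherwise it is empty), is a neighbourhood of an $l$-dimensional affine subspace of thickness $\approx \psi(|\q|)/|\q|$ in each of the $m$ short directions, and hence admits a cover by $\ll (|\q|/\psi(|\q|))^{l}$ balls of radius $\psi(|\q|)/|\q|$. Each such ball contributes $f(\psi(|\q|)/|\q|)$ to the $\cH^f$ pre-measure, so each admissible pair $(\p,\q)$ with $|\q| = q$ contributes $\ll g(\psi(q)/q)$. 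Summing over the $\ll q^{n+m-1}$ such pairs for each $q$ yields tail sums bounded by $\sum_{q \geq Q} q^{n+m-1} g(\psi(q)/q) \to 0$ as $Q \to \infty$, forcing $\cH^f(\cA_{n,m}^{\y}(\psi)) = 0$.

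For part (i), the strategy is to combine Theorem DLN2(i) with the general transfer principle of Theorem~\ref{general M statement}, mirroring the deduction of Theorem~\ref{DLN Theorem 1.2 Hausdorff} from Theorem DLN1. As in that earlier proof, the non-increasing hypothesis on $q^{n+m-1} g(\psi(q)/q)$ allows us to assume $\psi(q)/q \to 0$ without loss of generality, since otherwise $g$ (and hence $f$) vanishes on some neighbourhood of $0$ and the conclusion is trivial. Setting $\theta(q) := q\, g(\psi(q)/q)^{1/m}$, the algebraic identity $q^{n-1}\theta(q)^m = q^{n+m-1} g(\psi(q)/q)$ shows that $q \mapsto q^{n-1}\theta(q)^m$ is non-increasing and its series diverges. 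Theorem DLN2(i) applied to $\theta$ and the partition $\pi$ then furnishes $|\cA_{n,m}^{\pi,\y}(\theta)| = 1$ for Lebesgue almost every $\y \in \I^m$. Fixing any such $\y$, the identification $\cA_{n,m}^{\pi,\y}(\cdot) = \cM_{n,m}^{\pi,\y,I_m}(\cdot)$ makes Theorem~\ref{general M statement} (with $\Phi = I_m$) directly applicable, delivering $\cH^f(\cA_{n,m}^{\pi,\y}(\psi)) = \cH^f(\I^{nm})$ as required.

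The main obstacle, such as it is, is essentially cosmetic: one must verify the algebraic identity above and perform the reduction $\psi(q)/q \to 0$ so that the hypotheses of Theorem DLN2 and Theorem~\ref{general M statement} align. Crucially, the ``for Lebesgue almost every $\y$'' qualifier passes from the Lebesgue input to the Hausdorff output without any Fubini-type argument, because Theorem~\ref{general M statement} is applied pointwise for each admissible $\y$.
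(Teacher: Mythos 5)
Your proposal is correct and takes essentially the same route as the paper, which simply refers back to the proof of Theorem~\ref{DLN Theorem 1.2 Hausdorff}: a direct covering argument for the convergence case (ii), and, for the divergence case (i), applying Theorem~DLN2(i) to $\theta(q) := q\,g\bigl(\psi(q)/q\bigr)^{1/m}$ (after the standard reduction to $\psi(q)/q \to 0$) and then invoking Theorem~\ref{general M statement} with $\Phi = I_m$ pointwise for each $\y$ in the full-measure set produced by Theorem~DLN2(i).
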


\begin{proof}
This is similar to the proof of Theorem~\ref{DLN Theorem 1.2 Hausdorff} with the only difference being the introduction of $\vv y$.
\end{proof}

Finally, let us re-introduce the parameter $\Phi \in \I^{mm}$. In this case, considering the sets $\cM_{n,m}^{\pi,\y,\Phi}(\psi)$ (as defined on page \pageref{M inequality}), it follows from \cite[Theorem 1.3]{DLN ref} that we have:

\begin{theorem DLN3*}
Let $n, m \in \N$ and let $\pi$ be a partition of $\{1,\dots,m+n\}$ such that every component of $\pi$ has at least $m+1$ elements. Let $\psi: \N \to \R^+$ be a function such that the mapping $x \to x^{n-1}\psi(x)^m$ is non-increasing. Then, for any $\y \in \I^m$,
\begin{enumerate}[{\rm(i)}]
\item{if $\sum_{q=1}^{\infty}{q^{n-1}\psi(q)^m} = \infty$ then for almost every $\Phi \in \I^{mm}$ we have that $|\cM_{n,m}^{\pi,\y,\Phi}(\psi)|=1$.}
\item{if $\sum_{q=1}^{\infty}{q^{n-1}\psi(q)^m} < \infty$ then for any $\Phi \in \I^{mm}$ we have $|\cM_{n,m}^{\y,\Phi}(\psi)|=0$.}
\end{enumerate}
\end{theorem DLN3*}


Combining this with Theorem \ref{general M statement} we obtain the following Hausdorff measure statement.

\begin{theorem} \label{DLN Theorem 1.3 Hausdorff}
Let $n, m \in \N$ and let $\pi$ be a partition of $\{1,\dots,m+n\}$ such that every component of $\pi$ has at least $m+1$ elements. Let $\psi: \N \to \R^+$ be an approximating function. Let $f$ and $g: r \to g(r) := r^{-m(n-1)}f(r)$ be dimension functions such that the function $r^{-nm}f(r)$ is monotonic and $q^{n+m-1}g\left(\frac{\psi(q)}{q}\right)$ is non-increasing. Then, for any $\y \in \I^m$,
\begin{enumerate}[{\rm(i)}]
\item{if $\sum_{q=1}^{\infty}{q^{n+m-1}g\left(\frac{\psi(q)}{q}\right)}=\infty$ then for Lebesgue almost every $\Phi \in \I^{mm}$ we have that $\cH^f(\cM_{n,m}^{\pi,\y,\Phi}(\psi)) = \cH^f(\I^{nm})$.}
\item{if $\sum_{q=1}^{\infty}{q^{n+m-1}g\left(\frac{\psi(q)}{q}\right)}<\infty$ then, for any $\Phi \in \I^{mm}$, we have that $\cH^f(\cM_{n,m}^{\y,\Phi}(\psi))=0$.}
\end{enumerate}
\end{theorem}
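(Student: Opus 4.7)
The plan is to mimic the proof of Theorem~\ref{DLN Theorem 1.2 Hausdorff}, substituting Theorem~DLN3 for Theorem~DLN1 at the key Lebesgue step. As a preliminary normalisation, exactly as at the start of the proof of Theorem~\ref{DLN Theorem 1.2 Hausdorff}, the monotonicity assumption on $q\mapsto q^{n+m-1}g(\psi(q)/q)$ together with the fact that $g$ is a dimension function forces $\psi(q)/q\to 0$; otherwise $g$ (and hence $f$) vanishes near $0$ and both sides of the claimed equalities are trivially zero. So we may freely assume $\psi(q)/q\to 0$, which is exactly the hypothesis needed to apply Theorem~\ref{general M statement}.

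For the convergence case (ii), observe that $\cM_{n,m}^{\y,\Phi}(\psi)$ is a $\limsup$ of the neighbourhoods
\[
\Delta_{\p,\q}:=\{\x\in\I^{nm}:|\q\x+\p\Phi-\y|<\psi(|\q|)\}
\]
taken over $(\p,\q)\in\Z^m\times\Z^n\setminus\{\0\}$. For each fixed $\q$ with $|\q|=q$, only $O(q^m)$ choices of $\p$ give a non-empty $\Delta_{\p,\q}$ (since $\p\Phi$ must be bounded in terms of $q$), and each such set is contained in a union of $O(q^{m(n-1)})$ balls of radius comparable to $\psi(q)/q$. Summing the $f$-measures of these covers reduces, via $f(r)=r^{m(n-1)}g(r)$, to the tail of $\sum q^{n+m-1}g(\psi(q)/q)$. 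Convergence of this series then gives $\cH^f(\cM_{n,m}^{\y,\Phi}(\psi))=0$ by the Hausdorff--Cantelli lemma; this is independent of $\Phi$ and of any primitivity restriction, so (ii) follows.

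For the divergence case (i), define
\[
\theta:\N\to\R^+,\qquad \theta(q):=q\,g\!\left(\frac{\psi(q)}{q}\right)^{\!1/m}.
\]
Then $q^{n-1}\theta(q)^m=q^{n+m-1}g(\psi(q)/q)$, which by hypothesis is non-increasing and whose series diverges. By Theorem~DLN3(i), applied to $\theta$ in place of $\psi$, there is a full Lebesgue measure set $E\subset\I^{mm}$ such that for every $\Phi\in E$ we have $|\cM_{n,m}^{\pi,\y,\Phi}(\theta)|=1$. Fix any $\Phi\in E$. The hypothesis $\psi(q)/q\to 0$ guarantees condition \eqref{general theorem monotonicity condition} for the $\p$-independent approximating function $\Psi(\p,\q):=\psi(|\q|)\mathbf{1}_{P(\pi)}(\q,\p)$, so Theorem~\ref{general M statement} applies and yields $\cH^f(\cM_{n,m}^{\pi,\y,\Phi}(\psi))=\cH^f(\I^{nm})$, which is (i).

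The only non-routine point is verifying that Theorem~\ref{general M statement} can indeed absorb the primitivity constraint encoded in $\pi$, but this is already built into its statement (the theorem was obtained by specialising Theorem~\ref{general theorem: psi depending on (p,q)} to the indicator of $P(\pi)$), so no further work is required. Everything else is an assembly of ingredients already proved in the excerpt: the transference from Lebesgue to Hausdorff measure is carried by Theorem~\ref{general M statement}, the Lebesgue divergence input is Theorem~DLN3(i), and the convergence half is a standard covering estimate that makes no use of $\pi$.
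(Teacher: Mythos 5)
Your argument reproduces exactly what the paper intends by its one-line proof ``the proof is similar to that of Theorem~\ref{DLN Theorem 1.2 Hausdorff}'': carry out the same preliminary reduction to $\psi(q)/q\to 0$, dispatch the convergence half by a covering estimate, and in the divergence half feed the Lebesgue input of Theorem DLN3(i) (applied to $\theta(q)=q\,g(\psi(q)/q)^{1/m}$, whose series and monotonicity hypotheses you verify correctly) into Theorem~\ref{general M statement}. So the approach matches the paper's; the only concrete slip is in the covering count for part~(ii): a $\delta$-neighbourhood (with $\delta\asymp\psi(q)/q$) of an $m(n-1)$-dimensional affine piece inside $\I^{nm}$ requires on the order of $\delta^{-m(n-1)}=(q/\psi(q))^{m(n-1)}$ balls of radius $\delta$, not $O(q^{m(n-1)})$; with this corrected count the per-$(\p,\q)$ contribution is $\asymp(q/\psi(q))^{m(n-1)}f(\psi(q)/q)=g(\psi(q)/q)$, and summing over the $\ll q^{n+m-1}$ relevant pairs recovers precisely the series $\sum q^{n+m-1}g(\psi(q)/q)$ you invoke, so the conclusion stands.
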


\begin{proof}
Once again the proof is similar to that of Theorem~\ref{DLN Theorem 1.2 Hausdorff}.
\end{proof}

\section{Preliminaries to the proof of Theorem~\ref{mtp for linear forms theorem}} \label{preliminaries section}

\subsection{Hausdorff measures \label{HM}}

In this section we give a brief account of Hausdorff measures and dimension. Throughout, by a {\em dimension function} $f \, : \, \R^+ \to \R^+ $ we shall mean a left continuous, non-decreasing function such that $f(r)\to 0$ as $r\to 0 \, $. Given
a ball $B:=B(x,r)$ in $\R^k$, we define
\[ V^f(B)\,:=\,f(r)\]
and refer to $V^f(B)$ as the {\em $f$-volume of $B$}.  Note that if $m$ is $k$--dimensional Lebesgue measure and
$f(x)=m(B(0,1))x^k$, then $V^f$ is simply the volume of $B$ in the
usual geometric sense; i.e. $V^f(B)=m(B)$. In the case when
$f(x)=x^s$ for some $s\geq0$, we write $V^s$ for $V^f$.

The Hausdorff $f$--measure with respect to the dimension function
$f$ will be denoted throughout by ${\cal H}^{f}$ and is defined as
follows. Suppose $F$ is  a  subset  of $\R^k$. For $\rho
> 0$, a countable collection $ \left\{B_{i} \right\} $ of balls in
$\R^k$ with radii $r(B_i) \leq \rho $ for each $i$ such that $F \subset
\bigcup_{i} B_{i} $ is called a {\em $ \rho $-cover for $F$}.
Clearly such a cover exists for every $\rho > 0$. For a dimension
function $f$ define
\[ {\cal H}^{f}_{\rho} (F)  :=  \inf \left\{ \sum_{i} V^f(B_i): \left\{B_{i} \right\} \text{is a $\rho$--cover for $F$}\right\}.\]
The {\it Hausdorff $f$--measure}, $ {\cal H}^{f} (F)$, of $F$ with respect to
the dimension function $f$ is defined by
\[ {\cal H}^{f} (F) := \lim_{ \rho \rightarrow 0} {\cal H}^{f}_{\rho} (F) \; = \;
\sup_{\rho > 0 } {\cal H}^{f}_{\rho} (F) \; . \]

A simple consequence of the definition of $ {\cal H}^f $ is the
following useful fact (see, for example, \cite{Falconer ref}).

\begin{lemma} \label{dimfunlemma}
If $ \, f$ and $g$ are two dimension functions such that the ratio
$f(r)/g(r) \to 0 $ as $ r \to 0 $, then ${\cal H}^{f} (F) =0 $
whenever ${\cal H}^{g} (F) < \infty $.
\end{lemma}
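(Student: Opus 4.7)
The plan is to argue directly from the definition of Hausdorff $f$-measure by comparing $f$-volumes to $g$-volumes of balls in sufficiently fine covers.

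First I would fix an arbitrary $\eta>0$ and use the hypothesis $f(r)/g(r)\to 0$ as $r\to 0$ to choose $\delta_\eta>0$ such that $f(r)\le\eta\, g(r)$ whenever $0<r\le\delta_\eta$. Then for any $\rho\in(0,\delta_\eta]$ and any $\rho$-cover $\{B_i\}$ of $F$, every $B_i$ has radius $r(B_i)\le\rho\le\delta_\eta$, so
\[
V^f(B_i)=f(r(B_i))\le\eta\,g(r(B_i))=\eta\, V^g(B_i).
\]
Summing over $i$ gives $\sum_i V^f(B_i)\le\eta\sum_i V^g(B_i)$, and taking the infimum over all $\rho$-covers of $F$ on both sides yields
\[
\cH^f_\rho(F)\le\eta\,\cH^g_\rho(F)\le\eta\,\cH^g(F),
\]
where the second inequality uses that $\cH^g_\rho(F)$ is monotone in $\rho$ with supremum $\cH^g(F)$.

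Finally I would let $\rho\to 0$, obtaining $\cH^f(F)\le\eta\,\cH^g(F)$. Since $\cH^g(F)<\infty$ by hypothesis and $\eta>0$ was arbitrary, this forces $\cH^f(F)=0$, completing the proof.

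There is no real obstacle here: the argument is a routine one-line comparison of $\rho$-pre-measures, and the only care required is ensuring $\rho$ is chosen small enough (i.e.\ $\rho\le\delta_\eta$) so that the pointwise bound $f(r)\le\eta\,g(r)$ applies uniformly to every ball in the cover, together with the finiteness of $\cH^g(F)$ so that $\eta\,\cH^g(F)$ can be made arbitrarily small.
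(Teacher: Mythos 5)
Your proof is correct and is the standard argument one finds in the reference (Falconer) that the paper itself cites for this lemma without proof. The comparison of $\rho$-pre-measures via the pointwise bound $f(r)\le\eta\,g(r)$ on small radii, followed by letting $\rho\to 0$ and then $\eta\to 0$, is exactly the intended routine argument.
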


In the case that  $f(r) = r^s$ ($s \geq 0$), the measure $ \hf $
is the usual $s$--dimensional Hausdorff measure $\hs $ and the
Hausdorff dimension $\dimh F$ of a set $F$ is defined by
\[ \dimh \,
F \, := \, \inf \left\{ s \geq 0 : {\cal H}^{s} (F) =0 \right\}. \]

For subsets of $\R^k$, $\cH^k$ is comparable to the $k$--dimensional Lebesgue measure. Actually, $\cH^k$ is a constant multiple of the $k$--dimensional Lebesgue measure (but we shall not need this stronger statement).

Furthermore, for any ball $B$ in $\R^k$ we have that $V^k(B)$ is comparable to $|B|$. Thus there are constants $0<c_1<1<c_2<\infty$ such that for any ball $B$ in $\R^k$ we have
\begin{align}\label{volume comparison}
c_1\ V^k(B)\le \cH^k(B)\le c_2\ V^k(B).
\end{align}

A general and classical method for obtaining a lower bound for the Hausdorff $f$-measure of an arbitrary set $F$ is the following mass distribution
principle. This will play a central role in our proof of Theorem \ref{mtp for linear forms theorem} in \S\ref{proof section}.

\begin{lemma}[Mass Distribution Principle] \label{mass distribution principle}
Let $ \mu $ be a probability measure supported on a subset $F$ of $\R^k$.
Suppose there are  positive constants $c$ and $r_o$ such that $$
\mu ( B ) \leq \, c \;  V^f(B) \;
$$ for any ball $B$ with
radius $r \leq r_o \, $.  If $E$ is a subset of $F$ with $\mu(E) =
\lambda > 0$  then $ {\cal H}^{f} (E) \geq \lambda/c \, $.
\end{lemma}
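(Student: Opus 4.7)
The plan is to bound $\sum_i V^f(B_i)$ from below over all admissible ball covers of $E$ using the measure-theoretic hypothesis, and then to pass to the infimum and the limit $\rho \to 0$ built into the definition of $\cH^f$. First I would fix an arbitrary $\rho \in (0, r_0]$ and let $\{B_i\}$ be any $\rho$-cover of $E$ by balls in $\R^k$ of radius at most $\rho$. Since $\rho \le r_0$, the hypothesis $\mu(B) \le c\,V^f(B)$ applies to every ball in the cover. The inclusion $E \subseteq \bigcup_i B_i$ combined with countable subadditivity of the probability measure $\mu$ then gives
\[
\lambda \;=\; \mu(E) \;\le\; \sum_i \mu(B_i) \;\le\; c\sum_i V^f(B_i),
\]
so that $\sum_i V^f(B_i) \ge \lambda/c$ for every admissible cover.

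Taking the infimum over all $\rho$-covers of $E$ by balls yields $\cH^f_\rho(E) \ge \lambda/c$ for every $\rho \in (0, r_0]$. Since the Hausdorff $f$-measure is defined as $\cH^f(E) = \lim_{\rho \to 0} \cH^f_\rho(E) = \sup_{\rho > 0} \cH^f_\rho(E)$, and the lower bound $\lambda/c$ is uniform in $\rho$, the required inequality $\cH^f(E) \ge \lambda/c$ follows at once. There is no substantive obstacle here: the argument is a direct coupling of countable subadditivity of $\mu$ with the covering definition of $\cH^f$, and the only mild point to track is the restriction $\rho \le r_0$, which is harmless because of the $\rho \to 0$ limit in the definition of Hausdorff measure.
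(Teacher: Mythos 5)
Your proof is correct and is the standard argument for the Mass Distribution Principle; the paper does not reprove it, instead citing \cite{BV MTP}, where precisely this combination of countable subadditivity of $\mu$ with the covering definition of $\cH^f$ is used. The one small point worth tracking, which you do correctly, is that the restriction $\rho\le r_0$ is enough because $\cH^f(E)=\sup_{\rho>0}\cH^f_\rho(E)$, so a uniform lower bound on $\cH^f_\rho(E)$ for small $\rho$ suffices.
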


The above lemma is stated as it appears in \cite{BV MTP} since this version is most useful for our current purposes. For further information in general regarding Hausdorff measures and dimension we refer the reader to \cite{Falconer ref,Mattila ref}.

\subsection{The 5$r$--covering lemma}

Let $B:=B(x,r)$ be a ball in $\R^k$. For any $\lambda>0$, we denote by  $\lambda B$ the ball $B$ scaled by a factor $\lambda$; i.e.  $\lambda B(x,r):= B(x, \lambda r)$.

We conclude this section by stating a basic, but extremely useful, covering lemma which we will use throughout \cite{Mattila ref}.

\begin{lemma}[The $5r$--covering lemma]\label{5r}
Every family ${\cal F}$ of balls of uniformly bounded diameter in
$\R^k$ contains a disjoint subfamily ${\cal G}$ such that
\[ \bigcup_{B \in {\cal F} } B \ \subset \ \bigcup_{B
\in {\cal G} } 5B \ \ \ . \]

\end{lemma}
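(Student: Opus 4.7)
The plan is to run the standard greedy argument, stratifying the balls by size so that, at each stage, every ball selected is at least half as large as any subsequent candidate. Concretely, let $R := \sup_{B \in \mathcal{F}} r(B)$, which is finite by the hypothesis of uniformly bounded diameter, and partition $\mathcal{F}$ into layers
\[
\mathcal{F}_n := \{B \in \mathcal{F} : R/2^{n} < r(B) \le R/2^{n-1}\}, \qquad n = 1, 2, 3, \ldots,
\]
so that radii within each layer differ by at most a factor of $2$, and any ball in layer $n$ has radius at most twice that of any ball in an earlier layer.

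I would then construct $\mathcal{G}$ inductively, layer by layer. Set $\mathcal{G}_1$ to be a maximal subfamily of pairwise disjoint balls in $\mathcal{F}_1$; having defined $\mathcal{G}_1, \ldots, \mathcal{G}_{n-1}$, let $\mathcal{G}_n$ be a maximal subfamily of pairwise disjoint balls in $\mathcal{F}_n$ each of which is disjoint from every ball in $\mathcal{G}_1 \cup \cdots \cup \mathcal{G}_{n-1}$. The existence of such a maximal subfamily follows from Zorn's lemma applied to the partially ordered set of disjoint subfamilies meeting the required constraint. Finally set $\mathcal{G} := \bigcup_{n \ge 1} \mathcal{G}_n$; by construction $\mathcal{G}$ is pairwise disjoint.

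It remains to verify the covering statement. Given any $B \in \mathcal{F}$, locate the unique $n$ with $B \in \mathcal{F}_n$. By maximality of $\mathcal{G}_n$ (or the disjointness requirement that forced $B$ to be excluded from $\mathcal{G}_n$), $B$ must intersect some $B' \in \mathcal{G}_1 \cup \cdots \cup \mathcal{G}_n$. Writing $x_B, x_{B'}$ for the centres, if $y \in B \cap B'$ then $\|x_B - x_{B'}\| \le r(B) + r(B')$, and for any $z \in B$ the triangle inequality gives $\|z - x_{B'}\| \le 2 r(B) + r(B')$. Since $B' \in \mathcal{F}_m$ for some $m \le n$, we have $r(B') > R/2^{m} \ge R/2^{n}$, while $r(B) \le R/2^{n-1} = 2 \cdot R/2^{n} < 2 r(B')$. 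Therefore $\|z - x_{B'}\| < 2 \cdot 2 r(B') + r(B') = 5 r(B')$, so $z \in 5B'$, which is the desired inclusion.

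The only genuine subtlety is the appeal to Zorn's lemma at each layer: one must check that the union of a chain of disjoint subfamilies satisfying the constraint remains disjoint and still satisfies it, which is routine. If one prefers to avoid Zorn, the stratification trick allows the construction to be carried out greedily whenever $\mathcal{F}$ is countable, which suffices for all the applications in this paper; the size stratification is exactly what provides the factor $5$ rather than something worse, and is the key mechanism of the proof.
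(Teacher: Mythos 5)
Your proof is correct: the dyadic stratification by radius, the layer-by-layer choice of maximal disjoint subfamilies, and the triangle-inequality estimate $\|z-x_{B'}\|\le 2r(B)+r(B')<5r(B')$ constitute the standard Vitali-type argument, and the factor-of-two radius comparison is handled properly. Note that the paper does not prove this lemma at all --- it is quoted with a reference to Mattila's book --- and your argument is essentially the proof given there, so there is nothing further to reconcile.
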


\section{The $K_{G,B} $  covering lemma} \label{KBG lemma section}

Our strategy for proving Theorem \ref{mtp for linear forms theorem} is similar to that used for proving the Mass Transference Principle for balls in \cite{BV MTP}. There are however various technical differences that account for a different shape of approximating sets. First of all we will require a covering lemma analogous to the $\kgb$--lemma established in \cite[Section 4]{BV MTP}. This appears as Lemma \ref{kgb lemma} below. The balls obtained from Lemma \ref{kgb lemma} correspond to planes in the $\limsup$ set $\La(g(\U_j)^{\frac{1}{m}})$. Furthermore, for the proof of Theorem \ref{mtp for linear forms theorem} it is necessary for us to obtain from each of these ``larger" balls a collection of balls which correspond to the ``shrunk" $\limsup$ set $\La(\U)$. The desired properties of this collection and the existence of such a collection are the contents of Lemma \ref{C(A;n) lemma} of this section.

To save on notation, throughout this section let $\tU_j := g(\U_j)^{\frac{1}{m}}$. For an arbitrary ball $B \in \R^k$ and for each $j \in \N$ define
\[ \Pj(B) := \{B(\mathbf{x}, \tU_j) \subset B :\mathbf{x} \in R_j\}\,.\]
Analogously to Lemma 5 from \cite{BV MTP} we will require the following covering lemma.

\begin{lemma} \label{kgb lemma}
Let $\cR$, $\U$, $g$ and $\Omega$ be as in Theorem~\ref{mtp for linear forms theorem} and assume that \eqref{w1} is satisfied. Then for any ball $B$ in $\Omega$ and any $G \in \N$, there exists a finite collection
$$
\kgb \subset \big\{\aj:j \geq G,~A \in \Pj(B)\big\}
$$
satisfying the following properties\/$:$
\begin{enumerate}[{\rm(i)}]
\item{~if $\aj \in \kgb$ then $3A \subset B$;}
\item{~if $\aj, \ajj \in \kgb$ are distinct then $3A \cap 3A' = \emptyset$; and}
\item{~$\cH^k\left(\bigcup\limits_{\aj \in \kgb}{A}\right) \geq \frac{1}{4 \times 15^k}\cH^k(B)$.}
\end{enumerate}
\end{lemma}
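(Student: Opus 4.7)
My plan is to follow the strategy of Lemma 5 in BV MTP, adapting it to neighbourhoods of planes rather than balls. Fix $B \subset \Omega$ and $G \in \N$. The first step is to handle the boundary issue: since $\tU_j \to 0$, I can pass from $G$ to some $G' \geq G$ so small that there is a concentric sub-ball $B' \subset B$ with $\dist(B', \partial B) > 4\tU_j$ for every $j \geq G'$ and $\mathcal{H}^k(B') \geq \tfrac{1}{2}\mathcal{H}^k(B)$. Applying the hypothesis \eqref{w1} to $B'$ then gives
\[
\mathcal{H}^k\Big(B' \cap \bigcup_{j \geq G'} \Delta(R_j, \tU_j)\Big) \; = \; \mathcal{H}^k(B') \; \geq \; \tfrac{1}{2}\mathcal{H}^k(B).
\]

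Next, for each $j \geq G'$ and each $x \in B' \cap \Delta(R_j, \tU_j)$, I would pick some $\mathbf{x}_{x,j} \in R_j$ with $\|\mathbf{x}_{x,j} - x\| < \tU_j$ and set $A_{x,j} := B(\mathbf{x}_{x,j}, \tU_j)$. The choice of $B'$ ensures that $3A_{x,j} \subset B$, hence $A_{x,j} \in \Pj(B)$ and property (i) of the lemma is built in by construction. The family $\mathcal{F} := \{A_{x,j}\}$ then covers the set displayed above. I would then apply the $5r$-covering lemma (Lemma \ref{5r}) to the uniformly bounded collection $\{3A_{x,j}\}$ to extract a countable disjoint sub-family $\{3A_i\}_{i \in I}$ whose fivefold enlargements $\{15 A_i\}_{i \in I}$ still cover $\bigcup 3A_{x,j}$. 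The disjointness of the $3A_i$ delivers property (ii).

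To obtain (iii), I would combine the volume comparison \eqref{volume comparison} with the scaling of $\mathcal{H}^k$ on balls to convert the covering by $\{15 A_i\}$ into the lower bound
\[
\sum_{i \in I} \mathcal{H}^k(A_i) \; \geq \; \frac{1}{15^k}\,\mathcal{H}^k\Big(\bigcup_{i \in I} 15 A_i\Big) \; \geq \; \frac{1}{2 \cdot 15^k}\,\mathcal{H}^k(B).
\]
Since the $A_i$ are themselves disjoint (each sits inside a disjoint $3A_i$), the left-hand side equals $\mathcal{H}^k(\bigcup_i A_i)$, so by countable additivity a finite sub-index set $I_0 \subset I$ captures at least half of this mass, yielding the finite collection $K_{G,B} := \{(A_i; j_i) : i \in I_0\}$ with $\mathcal{H}^k(\bigcup_{i \in I_0} A_i) \geq \frac{1}{4 \cdot 15^k}\mathcal{H}^k(B)$.

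The main technical obstacle is constant-chasing: the shrinkage $B \to B'$, the successive enlargements by factors $3$ and $15$, and the approximate comparability \eqref{volume comparison} each introduce a multiplicative loss, and these must be arranged to fit under the prescribed ratio $\frac{1}{4 \times 15^k}$. The departure from the spherical BV MTP setting shows up only in the selection step (choosing $\mathbf{x}_{x,j}$ on the plane $R_j$ rather than at a prescribed point); the combinatorial covering machinery is otherwise essentially unchanged.
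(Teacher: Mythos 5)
Your proof is correct and takes essentially the same approach as the paper: apply the $5r$-covering lemma to balls of radius $3\tU_j$ centred on $R_j$ inside $B$, pass to the concentric third-radius balls to get the disjoint collection with the $15^k$ loss, and extract a finite subcollection capturing at least half the mass. The only cosmetic difference is that you handle the boundary of $B$ by applying \eqref{w1} to a concentric sub-ball $B'$, whereas the paper applies \eqref{w1} directly to $B$ and observes that the set not covered by the balls of $\Pj^3(B)$ lies within $3\tU_j$ of $\partial B$ and hence has vanishing measure contribution as $G\to\infty$.
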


\begin{remark}
Essentially, $\kgb$ is a collection of balls drawn from the families $\Pj(B)$. We write $\aj$ for a generic ball from $\kgb$ to `remember' the index $j$ of the family $\Pj(B)$ that the ball $A$ comes from. However, when we are referring only to the ball $A$ (as opposed to the pair $\aj$) we will just write $A$. Keeping track of the associated $j$ will be absolutely necessary in order to be able to choose the `right' collection of balls within $A$ that at the same time lie in an $\U_j$-neighborhood of the relevant $R_j$. Indeed, for $j\neq j'$ we could have $A = A'$ for some $A \in \Pj(B)$ and $A' \in \Phi_{j'}(B)$.
\end{remark}

\begin{proof}[Proof of Lemma~\ref{kgb lemma}]
For each $j \in \N$, consider the collection of balls
$$
\Pj^3(B) := \{B(\mathbf{x},3\tU_j) \subset B: \mathbf{x} \in R_j\}.
$$
By \eqref{w1}, for any $G\ge1$ we have that
$$
\cH^k\left(\bigcup_{j \geq G}{(\Delta(R_j, 3\tU_j) \cap B)}\right) = \cH^k(B).
$$
Observe that
$$
{\bigcup_{L \in \Pj^3(B)}{L}}~~\subset~~ \Delta(R_j, 3\tU_j) \cap B
$$
and that the difference of the two sets lies within $3\tU_j$ of the boundary of $B$. Then, since $\U_j \to 0$, and consequently $\tU_j \to 0$, as $j \to \infty$, we have that
\[
\cH^k\left(\bigcup_{j \geq G}{\bigcup_{L\in \Pj^3(B)}{L}}\right) \sim \cH^k\left(\bigcup_{j \geq G}{(\Delta(R_j, 3\tU_j) \cap B)}\right) = \cH^k(B) \quad \text{as } G \to \infty.
\]
In  particular, there exists a sufficiently large $G' \in \N$ such that for any $G \geq G'$ we have
\[ \cH^k\left(\bigcup_{j \geq G}{\bigcup_{L \in \Pj^3(B)}{L}}\right) \geq \frac{1}{2} \cH^k(B).\]
However, for any $G < G'$ we also have 
\[\bigcup_{j \geq G}{\bigcup_{L \in \Pj^3(B)}{L}} \supset \bigcup_{j \geq G'}{\bigcup_{L \in \Pj^3(B)}{L}}.\]
Thus, for any $G \in \N$ we must have 
\begin{align} \label{kgb lemma Hausdorff}
\cH^k\left(\bigcup_{j \geq G}{\bigcup_{L \in \Pj^3(B)}{L}}\right) &\geq \frac{1}{2} \cH^k(B).
\end{align}
(In fact, using the same argument as above it is possible to show that for any $G \in \N$ we have $\cH^k\left(\bigcup_{j \geq G}{\bigcup_{L \in \Pj^3(B)}{L}}\right) \geq (1-\varepsilon)\cH^k(B)$ for any $0 < \varepsilon < 1$ and hence that we must have $\cH^k\left(\bigcup_{j \geq G}{\bigcup_{L \in \Pj^3(B)}{L}}\right) = \cH^k(B)$. However, (\ref{kgb lemma Hausdorff}) is sufficient for our purposes here.)

By Lemma \ref{5r}, there exists a disjoint subcollection $\cG \subset \{\Lj:j\ge G,~L\in \Pj^3(B)\}$ such that
\[ \bigcup_{(L;j) \in \cG}^{\circ}{L} \subset \bigcup_{j \geq G}{\bigcup_{L \in \Pj^3(B)}{L}} \subset \bigcup_{(L;j) \in \cG}{5L}.\]

Now, let $\cG'$ consist of all the balls from $\cG$ but shrunk by a factor of 3; so the balls in $\cG'$ will still be disjoint when scaled by the factor of 3. Formally,
$$
\cG':=\{(\tfrac13L;j):(L;j)\in\cG\}.
$$
Then, we have that
\begin{align} \label{kgb lemma inclusions}
\bigcup_{\aj \in \cG'}^{\circ}{A} ~\subset~ \bigcup_{j \geq G}~{\bigcup_{L\in \Pj^3(B)}{L}} \subset \bigcup_{\aj \in \cG'}{15A}.
\end{align}
From (\ref{kgb lemma Hausdorff}) and (\ref{kgb lemma inclusions}) we have
\begin{align*}
\cH^k\left(\bigcup_{\aj \in \cG'}{A}\right) &= \sum_{\aj \in \cG'}{\cH^k(A)} \\
                                          &= \sum_{\aj \in \cG'}{\frac{1}{15^k}{\cH^k(15A)}} \\
                                          &\geq \frac{1}{15^k}{\cH^k\left(\bigcup_{\aj \in \cG'}{15A}\right)} \\
                                          &\geq \frac{1}{15^k}\cH^k\left(\bigcup_{j \geq G}{\bigcup_{L \in \Pj^3(B)}{L}}\right) \\
                                          &\geq \frac{1}{2 \times 15^k}\cH^k(B).
\end{align*}
Next note that, since the balls in $\cG'$ are disjoint and contained in $B$ and $\tU_j \to 0$ as $n \to \infty$, we have that
\[ \cH^k\left({\bigcup_{\substack{\aj \in \cG' \\ j \geq N}}A}\right) \to 0 \quad \text{as } N \to \infty.\]
Therefore, there exists a sufficiently large $N_0 \in \N$ such that
\[ \cH^k\left({\bigcup_{\substack{\aj \in \cG' \\ j \geq N_0}}A}\right)  < \frac{1}{4 \times 15^k} \cH^k(B).\]
Thus, taking $\kgb$ to be the subcollection of $\aj \in \cG'$ with $G\le j < N_0$ ensures that $\kgb$ is a finite collection of balls while still satisfying the required properties (i)--(iii).
\end{proof}

\medskip

\begin{lemma} \label{C(A;n) lemma}
Let $\cR$, $\U$, $g$, $\Omega$ and $B$ be as in Lemma~\ref{kgb lemma} and assume that \eqref{w1} is satisfied.
Furthermore, assume that $r^{-k}f(r) \to \infty$ as $r \to 0$.
Let $\kgb$ be as in Lemma~\ref{kgb lemma}. Then, provided that $G$ is sufficiently large, for any $\aj\in\kgb$ there exists a collection $\caj$ of balls satisfying the following properties:
\begin{enumerate}[{\rm(i)}]
\item{~each ball in $\caj$ is of radius $\U_j$ and is centred on $R_j;$}
\item{~if $L \in \caj$ then $3L \subset A;$}
\item{~if $L, M \in \caj$ are distinct then $3L \cap 3M = \emptyset;$}
\item{~$\displaystyle\frac{1}{6^k}\,\cH^k\big(\Delta(R_j,\U_j)\cap \tfrac12A \big)~\le~\cH^k\left(\bigcup_{L \in \caj}{L}\right) ~\leq~ \,\cH^k\big(\Delta(R_j,\U_j)\cap A \big)$; and}
\item there exist some constants $d_1,d_2 > 0$ such that
\begin{align} \label{cardinality of C(A;n)}
d_1\times\left(\frac{g(\U_j)^{\frac{1}{m}}}{\U_j}\right)^l ~\leq~ \# \caj &~\leq~ d_2\times\left(\frac{g(\U_j)^{\frac{1}{m}}}{\U_j}\right)^l.
\end{align}
\end{enumerate}
\end{lemma}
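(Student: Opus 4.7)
The plan is to construct $\caj$ by a maximal packing argument inside the $l$-dimensional disk $D := R_j \cap \tfrac12 A$. Since $A = B(a_A, \tU_j)$ is centred at some $a_A \in R_j$, this disk has $l$-dimensional radius $\tU_j/2$. The key scale-separation observation is that
\[
\frac{\tU_j}{\U_j} \;=\; \frac{g(\U_j)^{1/m}}{\U_j} \;=\; \left(\frac{f(\U_j)}{\U_j^k}\right)^{1/m} \;\longrightarrow\; \infty
\]
as $j\to\infty$, by the hypothesis $r^{-k}f(r)\to\infty$ and the fact that $\U_j\to 0$. Hence by choosing $G$ sufficiently large we may assume $\tU_j \geq 6\U_j$ for all $j\geq G$; this is the inequality underlying all the constants below.

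Now take $\{x_1,\dots,x_N\}\subset D$ to be a maximal collection with $\|x_i-x_{i'}\|\geq 6\U_j$ for distinct indices $i,i'$, and define
\[
\caj \;:=\; \bigl\{\,B(x_i,\U_j) : 1\leq i\leq N\,\bigr\}.
\]
Property (i) is immediate, and (iii) follows from $\|x_i-x_{i'}\|\geq 6\U_j$ which forces $B(x_i,3\U_j)\cap B(x_{i'},3\U_j)=\emptyset$. For (ii), since $x_i\in\tfrac12 A$ we have $\|x_i-a_A\|\leq\tU_j/2$, so $3L=B(x_i,3\U_j)\subset B(a_A,\tU_j/2+3\U_j)\subset A$ provided $\tU_j\geq 6\U_j$.

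For (v), write $B_l(y,r)$ for the $l$-dimensional ball of radius $r$ in $R_j$ about $y$. Maximality forces $D\subset\bigcup_i B_l(x_i,6\U_j)$, and comparing $l$-dimensional volumes yields $\#\caj \geq d_1(\tU_j/\U_j)^l$. Conversely, the disjoint $l$-balls $B_l(x_i,3\U_j)$ are all contained in $R_j\cap B(a_A,\tU_j/2+3\U_j)$, an $l$-disk of radius at most $\tU_j$, yielding $\#\caj\leq d_2(\tU_j/\U_j)^l$.

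The main step is (iv). The upper bound is immediate: the balls $L\in\caj$ are pairwise disjoint and each lies in $\Delta(R_j,\U_j)\cap A$, since each $L$ is centred on $R_j$ with radius $\U_j$ and $L\subset 3L\subset A$. For the lower bound, given $z\in\Delta(R_j,\U_j)\cap\tfrac12 A$, fix a closest point $\pi(z)\in R_j$ so that $\|z-\pi(z)\|<\U_j$ and $\|\pi(z)-a_A\|<\tU_j/2+\U_j$. The hard point is that $\pi(z)$ may lie just outside $D$; to handle this, one can replace $D$ by the slightly enlarged $l$-disk $D^+ := R_j\cap B(a_A,\tU_j/2+\U_j)$ when building the maximal packing (which, by the scale separation $\tU_j\geq 6\U_j$, is still contained in $R_j\cap A$ and has $l$-volume comparable to that of $D$; properties (i)--(iii) and (v) continue to hold after enlarging $G$ slightly). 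Maximality then gives $\pi(z)\in B_l(x_i,6\U_j)$ for some $i$, so $\|z-x_i\|\leq\|z-\pi(z)\|+\|\pi(z)-x_i\|< c\U_j$ with $c$ of order $6$; consequently
\[
\Delta(R_j,\U_j)\cap\tfrac12 A \;\subset\; \bigcup_i B(x_i,c\U_j),
\]
and comparing $k$-dimensional volumes with the disjoint union $\bigcup_L L$ yields the claimed bound. The main obstacle is purely a careful geometric bookkeeping to arrange the constant as $6^k$; the argument at heart is just a packing-to-covering comparison between $l$- and $k$-dimensional Lebesgue measures on the tubular neighbourhood of $R_j$.
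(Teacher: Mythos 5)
Your construction is the same as the paper's: take a maximal $6\U_j$-separated set of points on $R_j$ inside (roughly) $\tfrac12A$, let $\caj$ be the $\U_j$-balls around them, and verify (i)--(v) via the scale separation $\tU_j/\U_j\to\infty$. Two points worth noting, though. First, you have correctly identified a small slip in the paper's own argument for (iv): from the maximality statement (``for any $\vv x\in R_j\cap\tfrac12A$ there exists $\vv x_i$ with $\|\vv x-\vv x_i\|\le 6\U_j$'') the paper concludes $\Delta(R_j,\U_j)\cap\tfrac12A\subset\bigcup_L 6L$, but a point $\vv z\in\Delta(R_j,\U_j)\cap\tfrac12A$ need not project onto $R_j\cap\tfrac12A$, so one only gets a covering by $cL$ with $c$ slightly larger than $6$ (or, equivalently, one should pack a marginally enlarged disk, as you do). The constant $6^k$ in (iv) is therefore not quite right as stated, although the exact value of the constant is never used and the lemma remains correct. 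Second, your proof of (v) is a direct $l$-dimensional packing/covering volume count, whereas the paper deduces (v) as a corollary of (iv) by comparing $\cH^k(\Delta(R_j,\U_j)\cap A)\asymp\U_j^m\tU_j^l$ with $\#\caj\cdot\U_j^k$; both are fine, and yours is arguably the more self-contained route.
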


\begin{proof}
First of all note that, by the assumption that $r^{-k}f(r) \to \infty$ as $r \to 0$, we have that
\begin{equation}\label{vb1}
  \frac{\U_j}{\tU_j}\to 0\qquad\text{as }n\to\infty\,.
\end{equation}
In particular we can assume that $G$ is sufficiently large so that
\begin{equation}\label{vb2}
\text{$6\U_j<\tU_j$ \qquad for any $n\ge G$.}
\end{equation}
Let $\vv x_1,\dots, \vv x_t\in R_j\cap\tfrac12A$
be any collection of points such that
\begin{equation}\label{sep}
\|\vv x_i-\vv x_{i'}\|> 6\U_j\qquad\text{if }i\neq i'
\end{equation}
and $t$ is maximal possible. The existence of such a collection follows immediately from the fact that
$R_j\cap\tfrac12A$ is bounded and, by \eqref{sep}, the collection is discrete. Let
$$
\cC\aj:=\{B(\vv x_1,\U_j),\dots,B(\vv x_t,\U_j)\}\,.
$$
Thus, Property (i) is trivially satisfied for this collection $\caj$. Recall that, by construction, $A\in\Phi_j(B)$, which means that the radius of $\tfrac12A$ is $\tfrac12\tU_j$.
If $L\in \cC\aj$, say $L:=B(\vv x_i,\U_j)$, then for any $\vv y\in 3L$ we have that $\|\vv y-\vv x_i\|<3\U_j$ while $\|\vv x_i-\vv x_0\|\le\tfrac12\tU_j$. Then, using \eqref{vb2} and the triangle inequality, we get that
$\|\vv y-\vv x_0\|\le\|\vv y-\vv x_i\|+\|\vv x_i-\vv x_0\|\le 3\U_j+\tfrac12\tU_j<\tU_j$. Hence $3L\subset A$ whence Property~(ii) follows. Further, Property~(iii) follows immediately from condition \eqref{sep}.

By the maximality of the collection $\vv x_1,\dots, \vv x_t$, for any $\vv x\in R_j\cap\tfrac12A$ there exists an $\vv x_i$ from this collection such that $\|\vv x-\vv x_i\|\le 6\U_j$. Hence,
\begin{equation}\label{t1}
\Delta(R_j,\U_j)\cap \tfrac12A~\subset~\bigcup_{L\in\cC\aj}6L\,.
\end{equation}
Hence
\begin{align*}
\cH^k(\Delta(R_j,\U_j)\cap \tfrac12A)&\le\sum_{L\in\cC\aj}\cH^k(6L)\\
&\le\sum_{L\in\cC\aj}6^k\cH^k(L)\\
&=
6^k\cH^k\left(\bigcup^\circ_{L\in\cC\aj}L\right)\,.
\end{align*}
On the other hand, by Property~(ii), we have that
\begin{equation}\label{t1+}
\bigcup^\circ_{L\in\cC\aj}L~\subset~\Delta(R_j,\U_j)\cap A\,,
\end{equation}
which together with the previous inequality establishes Property~(iv).

Finally, Property~(v) is an immediate consequence of Property~(iv) upon noting that
    $$
    \cH^k\big(\Delta(R_j,\U_j)\cap \tfrac12A \big)~\asymp~ \,\cH^k\big(\Delta(R_j,\U_j)\cap A \big)~\asymp~
    \U_j^m\tU_j^l
    $$
    and
    $$
\cH^k\left(\bigcup_{L \in \caj}{L}\right) ~=~ \#\caj\cH^k(L)~\asymp~ \#\caj\,\U_j^k\,,
    $$
    where $l$ is the dimension of $R_j$, $m=k-l$ and $L$ is any ball from $\caj$.
\end{proof}

\begin{remark*} Throughout we use the Vinogradov notation, writing $A \ll B$ if $A \leq cB$ for some positive constant $c$ and $A \gg B$ if $A \geq c'B$ for some positive constant $c'$. If $A \ll B$ and $A \gg B$ we write $A \asymp B$.
\end{remark*}

\section{Proof of Theorem \ref{mtp for linear forms theorem}} \label{proof section}

As with the proof of the Mass Transference Principle given in \cite{BV MTP} and the proof of Theorem BV1 given in \cite{BV Slicing}, we begin by noting that we may assume that $r^{-k}f(r) \to \infty$ as $r \to 0$. To see this we first observe that, by Lemma \ref{dimfunlemma}, if $r^{-k}f(r) \to 0$ as $r \to 0$ we have that $\cH^f(B) = 0$ for any ball $B$ in $\R^k$. Furthermore, since $B \cap \La(\U) \subset B$, the result follows trivially.

Now suppose that $r^{-k}f(r) \to \lambda$ as $r\to 0$ for some $0 < \lambda < \infty$. In this case, $\cH^f$ is comparable to $\cH^k$ and so it would be sufficient to show that $\cH^k(B \cap \La(\U)) = \cH^k(B)$. Since $r^{-k}f(r) \to \lambda$ as $r \to 0$ we have that the ratio $\frac{f(r)}{r^k}$ is bounded between positive constants for sufficiently small $r$. In turn, this implies that, in this case, the ratio of the values $g(\U_i)^{\frac{1}{m}}$ and $\U_i$ is uniformly bounded between positive constants. It then follows from \cite[Lemma 4]{BV Zero-one} that
\[\cH^k\left(B \cap \La\left(g(\U)^{\frac{1}{m}}\right)\right) = \cH^k(B \cap \La(\U)).\]
This together with \eqref{w1} then implies the required result in this case.

Thus, for the rest of the proof we may assume without loss of generality that $r^{-k}f(r) \to \infty$ as $r \to 0$. With this assumption it is a consequence of Lemma \ref{dimfunlemma} that $\cH^f(B_0) = \infty$ for any ball $B_0$ in $\Omega$, which we fix from now on. Therefore, our goal for the rest of the proof is to show that
\[\cH^f(B_0 \cap \La(\U)) = \infty.\]
To this end, for any $\eta > 1$, we will construct a Cantor subset $\K_{\eta}$ of $B_0 \cap \La(\U)$ and a probability measure $\mu$ supported on $\K_{\eta}$ satisfying the condition that for any arbitrary ball $D$ of sufficiently small radius $r(D)$ we have
\begin{align} \label{task}
\mu(D) \ll \frac{V^f(D)}{\eta},
\end{align}
where the implied constant does not depend on $D$ or $\eta$.
By the Mass Distribution Principle (Lemma \ref{mass distribution principle}) and the fact that $\K_{\eta} \subset B_0 \cap \La(\U)$, we would then have that $\cH^f(B_0 \cap \La(\U))\ge\cH^f(\K_{\eta})\gg\eta$ and the proof is finished by taking $\eta$ to be arbitrarily large.

\subsection{The desired properties  of $\K_{\eta}$} \label{properties section}

We will construct the Cantor set $\K_{\eta} := \bigcap_{n=1}^\infty \K(n)$ so that each level $\K(n)$ is a finite union of disjoint closed balls and the levels are nested, that is $\K(n)\supset\K(n+1)$ for $n\geq1$. We will denote the collection of balls constituting level $n$ by $K(n)$.
As with the Cantor set in \cite{BV MTP}, the construction of $\K_{\eta}$ is inductive and each level $\K(n)$ will consist of local levels and sub-levels. So, suppose that the $(n-1)$th level $\K(n-1)$ has been constructed. Then, for every $B \in K(n-1)$ we construct the \itshape $(n,B)$-local level\normalfont, $K(n,B)$, which will consist of balls contained in $B$. The collection of balls $K(n)$ will take the form
\[K(n) := \bigcup_{B \in K(n-1)}{K(n,B)}.\]
Looking even more closely at the construction, each $(n,B)$-local level will consist of \itshape local sub-levels \normalfont and will be of the form
\begin{equation}\label{vb7}
K(n,B) := \bigcup_{i=1}^{l_B}{K(n,B,i)}.
\end{equation}
Here, $K(n,B,i)$ denotes the $i$th local sub-level and $l_B$ is the number of local sub-levels. For $n\ge2$ each local sub-level will be define as the union
\begin{equation}\label{vb3}
K(n,B,i):=
\bigcup_{B'\in\cG(n,B,i)}~~{\bigcup_{\aj\in K_{G',B'}}{\caj}}\,,
\end{equation}
where $B'$ will lie in a suitably chosen collection of balls $\cG(n,B,i)$ within $B$, $K_{G',B'}$ will arise from Lemma~\ref{kgb lemma} and $\caj$ will arise from Lemma~\ref{C(A;n) lemma}. It will be apparent from the construction that the parameter $G'$ becomes arbitrarily large as we construct levels. The set of all pairs $\aj$ that contribute to \eqref{vb3} will be denoted by $\tilde K(n,B,i)$. Thus,
$$
\widetilde K(n,B,i):=\bigcup_{B'\in\cG(n,B,i)}~~K_{G',B'}\, \qquad\text{and}\qquad K(n,B,i) = \bigcup_{\aj \in \widetilde K(n,B,i)}{\caj}.
$$
If additionally we start with $\K(1):=B_0$, then in view of the definition of the sets $\caj$ the inclusion $\K_{\eta}\subset B_0 \cap \La(\U)$ is straightforward. Hence the only real part of the proof will be to show the validity of \eqref{task} for some suitable measure supported on $\K_\eta$. This will require several additional properties which are now stated.

\subsection*{The properties of levels and sub-levels of $\K_\eta$}

\begin{enumerate}

\item[{\bf(P0)}] $K(1)$ consists of one ball, namely $B_0$.

\medskip

\item[{\bf(P1)}] For any $n \geq 2$ and any $B\in K(n-1)$ the balls
$$
\{3L\ :\ L\in K(n,B)\}
$$
are disjoint and contained in $B$.

\medskip

\item[{\bf(P2)}] For any $n\ge 2$, any $B\in K(n-1)$ and any $i\in\{1,\ldots, l_B\}$ the local sub-level $K(n,B,i)$ is a finite union of some collections $\caj$ of balls satisfying Properties (i)--(v) of Lemma~\ref{C(A;n) lemma}, where the balls $3A$ are disjoint and contained in $B$.

\medskip

\item[{\bf(P3)}]
For any $n\ge 2$, $B\in K(n-1)$ and $i\in\{1,\ldots,l_B\}$ we have
$$
\sum_{\aj\in\widetilde K(n,B,i)}V^k(A) \ \ge\ c_3\ V^k(B)  $$ where  $c_3
:= \frac{1}{2^{k+3}\times 5^k \times 15^k}\left(\frac{c_1}{c_2}\right)^2 $ with $c_1$ and $c_2$ as defined in \eqref{volume comparison}.

\medskip

\item[{\bf(P4)}] For any $n\ge 2$, $B\in K(n-1)$, any
$i\in\{1,\ldots,l_B-1\}$ and any $L\in K(n,B,i)$ and $M\in
K(n,B,i+1)$ we have
$$
f(r(M))\le \frac{1}{2}\ f(r(L)) \quad \text{and} \quad g(r(M))\le \frac{1}{2}\ g(r(L)).
$$

\item[{\bf(P5)}] The number of local sub-levels is defined by
$$
l_B:=\left\{
\begin{array}{lcl}
 \displaystyle\left[\frac{c_2\,\eta}{c_3\,\cH^k(B)}\right]+1 & , & \mbox{ if
 $B = B_0 := \K(1)$,}\\[5ex]
 \displaystyle\left[\frac{V^f(B)}{c_3\,V^k(B)}\right]+1 & , & \mbox{ if
 $B\in K(n)$ with $n\ge 2$},
\end{array}
\right.
$$
and satisfies $l_B\ge 2$ for $B\in K(n)$ with $n\ge2$.

\end{enumerate}

Properties (P1) and (P2) are imposed to make sure that the balls in the Cantor construction are sufficiently well separated. On the other hand properties (P3) and (P5) make sure that there are ``enough" balls in each level of the construction of the Cantor set. Property (P4) essentially ensures that all balls involved in the construction of a level of the Cantor set are sufficiently small compared with balls involved in the construction of the previous level. All of the properties (P1)--(P5) will play a crucial role in the measure estimates we obtain in \S\ref{Cantor contruction measure estimates section} and \S\ref{arbitrary ball measure estimates section}.

\subsection{The existence of $\K_{\eta}$ \label{kantor1}}

In this section we show that it is possible to construct a Cantor set with the properties outlined in Section \ref{properties section}.
In what follows we will use the following notation:
$$
K_l(n,B) \ := \ \bigcup_{i=1}^{l}K(n,B,i) \qquad\text{and}\qquad \widetilde K_l(n,B) \ := \ \bigcup_{i=1}^{l}\widetilde K(n,B,i)  \ .
$$

\noindent {\bf Level 1.} The first level is defined by
taking the arbitrary ball $B_0$. Thus, $\K(1) := B_0$ and property
(P0) is trivially satisfied.
We proceed  by induction. Assume that the first $(n-1)$ levels
$\K(1)$, $\K(2)$, $\ldots$ , $\K(n-1)$ have been constructed. We now
construct the $n$'th level $\K(n)$.

\noindent {\bf Level n.} To construct
the $n$'th level we will define local levels $K(n,B)$ for each  $B\in
K(n-1)$. Therefore, from now on we fix some ball $B\in K(n-1)$ and a
sufficiently small constant $\ve:=\ve(B)>0$ which will be
determined later. Recall that each local level $K(n,B)$ will consist of
local sub-levels $K(n,B,i)$ where $1\leq i \leq l_B$ and $l_B $ is given
by Property~(P5).
Let $G \in \N$ be sufficiently large so that Lemmas~\ref{kgb lemma} and \ref{C(A;n) lemma} are applicable. Furthermore suppose that $G$ is large enough so that
\begin{equation}\label{radii comparison}
3\U_j< g(\U_j)^{\frac{1}{m}} \qquad\text{ whenever}\qquad j\geq G,
\end{equation}
\begin{equation}\label{epsilon relation}
\frac{\U_j^k}{f(\U_j)}< \ve \
\frac{r(B)^k}{f(r(B))} \qquad\text{ whenever}\qquad j\geq G,
\end{equation}
and
\begin{equation}\label{card}
\left[\frac{f(\U_j)}{c_3\,\U_j^k}\right] \ \geq \ 1
\qquad\text{ whenever}\qquad j\ge G, \
\end{equation}
where $c_3$ is the constant appearing in property (P3) above.
Note that the existence of $G$ satisfying (\ref{radii comparison})--(\ref{card}) follows from the assumption that $r^{-k}f(r) \to \infty$ as $r \to 0$ and the condition that $\U_j \to 0$ as $j \to \infty$.

\noindent{\bf Sub-level 1.} With $B$ and $G$ as above, let $\kgb$ denote the collection of balls arising from Lemma \ref{kgb lemma}. Define the first sub-level of $K(n,B)$ to be
\[
K(n,B,1) := \bigcup_{\aj \in \kgb}~\caj\,,
\]
thus
$$
\widetilde K(n,B,1)=K_{G,B}\qquad\text{and}\qquad \cG(n,B,1)=\{B\}\,.
$$
By the properties of $\caj$ (Lemma \ref{C(A;n) lemma}), it follows that (P1) is satisfied within this sub-level. From the properties of $\kgb$ (Lemma \ref{kgb lemma}) and Lemma \ref{C(A;n) lemma} it follows that (P2) and (P3) are satisfied for $i = 1$.

\noindent{\bf Higher sub-levels.} To construct higher
sub-levels  we argue by induction. For $l<l_B$,  assume that
the sub-levels $K(n,B,1), \ldots, K(n,B,l)$
satisfying  properties (P1)--(P4) with $l_B$ replaced by $l$ have already been defined. We now construct the next sub-level $K(n,B,l+1)$.

As every  sub-level of the construction has to be well separated
from the previous ones, we first  verify that there is enough
`space'  left over in $B$  once we have removed the  sub-levels
$K(n,B,1), \ldots, K(n,B,l)$ from $B$. More precisely, let
\[A^{(l)} \ := \  \mbox{\small{$\frac{1}{2}$}}B \ \setminus
\bigcup_{L\in K_l(n,B)} \!\!\! 4L \ . \]
We will show that
\begin{align}\label{A measure inequality}
\cH^k\big(A^{(l)} \big)\geq \tfrac{1}{2}\
\cH^k(\mbox{\footnotesize{$\frac{1}{2}$}}B) \ .
\end{align}
First, observe that
\begin{align*}
\cH^k\left(\bigcup_{L \in K_l(n,B)}{4L}\right)
              &\leq \sum_{L \in K_l(n,B)}{\cH^k(4L)} \\
              &\stackrel{(\ref{volume comparison})}\leq 4^kc_2 \sum_{L \in K_l(n,B)}{V^k(L)} \\
              &= 4^kc_2 \sum_{i=1}^{l}{\sum_{L \in K(n,B,i)}{V^k(L)}} \\
              &= 4^kc_2 \sum_{i=1}^{l}~{\sum_{\aj \in \widetilde K(n,B,i)}{\#\caj \times \U_j^k}} \\
              &\stackrel{(\ref{cardinality of C(A;n)})}\leq 4^kc_2d_2 \sum_{i=1}^{l}~{\sum_{\aj \in\widetilde K(n,B,i)}{\left(\frac{g(\U_j)^{\frac{1}{m}}}{\U_j}\right)^l\U_j^k}} \\
              &= 4^kc_2d_2 \sum_{i=1}^{l}~{\sum_{\aj \in\widetilde K(n,B,i)}{g(\U_j)^{\frac{l}{m}}\U_j^m}} \\
              &= 4^kc_2d_2 \sum_{i=1}^{l}~{\sum_{\aj \in\widetilde K(n,B,i)}{g(\U_j)^{\frac{k}{m}} \frac{\U_j^m}{g(\U_j)}}} \\
              &= 4^kc_2d_2 \sum_{i=1}^{l}~{\sum_{\aj \in\widetilde  K(n,B,i)}{g(\U_j)^{\frac{k}{m}}\frac{\U_j^k}{f(\U_j)}}}.
\end{align*}
Hence, by (\ref{epsilon relation}), we get that
\begin{align} \label{space}
\cH^k\left(\bigcup_{L \in K_l(n,B)}{4L}\right)
              &\leq 4^kc_2d_2\ve\frac{r(B)^k}{f(r(B))}\sum_{i=1}^{l}~{\sum_{\aj \in\widetilde K(n,B,i)}{g(\U_j)^{\frac{k}{m}}}} \nonumber \\
              &\leq 4^kc_2d_2\ve\frac{r(B)^k}{f(r(B))}\sum_{i=1}^{l}~{\sum_{\aj \in\widetilde K(n,B,i)}{V^k(A)}} \nonumber \\
              &\stackrel{(\ref{volume comparison})}\leq 4^k\frac{c_2}{c_1}d_2\ve\frac{r(B)^k}{f(r(B))}\sum_{i=1}^{l}~{\sum_{\aj \in\widetilde K(n,B,i)}{\cH^k(A)}} \nonumber \\
              &\stackrel{\textbf{(P2)}}\leq 4^k\frac{c_2}{c_1}d_2\ve\frac{r(B)^k}{f(r(B))}l\cH^k(B) \nonumber \\
              &\leq 4^k\frac{c_2}{c_1}d_2\ve\frac{r(B)^k}{f(r(B))}(l_B-1)\cH^k(B).
\end{align}

If $B = B_0$, set
\[ \ve = \ve(B_0) := \frac{1}{2d_2}\left(\frac{c_1}{c_2}\right)^2\frac{c_3}{2^k4^k}\frac{f(r(B_0))}{\eta}.\]
Otherwise, if $B \neq B_0$, set
\[ \ve = \ve(B) := \ve(B_0) \times \frac{\eta}{f(r(B_0))} = \frac{1}{2d_2}\left(\frac{c_1}{c_2}\right)^2\frac{c_3}{2^k4^k}.\]

Then, it follows from (\ref{space}) combined with \textbf{(P5)} that
\[\cH^k\left(\bigcup_{L \in K_l(n,B)}{4L}\right) \leq \tfrac{1}{2}\cH^k\left(\tfrac{1}{2}B\right),\]
thus verifying (\ref{A measure inequality}).

By construction, $K_l(n,B)$ is a finite collection of balls. Therefore, the quantity
$$
d_{\min} := \min \{\diam(L): L \in K_l(n,B)\}
$$
is well-defined and positive.
Let ${\cal A}(n,B,l)$ be the collection of all the balls of diameter $d_{\min}$ centred at a point in
$A^{(l)}$. By the $5r$--covering lemma (Lemma~\ref{5r}), there exists a disjoint subcollection ${\cal G}(n,B,l+1)$ of $\cA(n,B,l)$
such that $$ A^{(l)} \ \subset \ \bigcup_{B' \in \cA(n,B,l)} B' \ \subset \ \bigcup_{B' \in {\cal G}(n,B,l+1)} 5 B'  .
$$
The  collection ${\cal G}(n,B,l+1)$ is clearly
contained within $B$  and, since the balls in this collection are disjoint and of the same size,  it  is finite. Moreover, by construction
\begin{equation}
 B' \cap \bigcup_{L \in K_l(n,B)} 3 L =
\emptyset \hspace{7mm} {\rm for\ any \ }  B'\in {\cal
G}(n,B,l+1) \ ; \label{blcoll} \end{equation} i.e. the balls in
${\cal G}(n,B,l+1) $ do not intersect any of the $3L$ balls from the
previous sub-levels. It follows that
$$
{\cal H}^k \left(\bigcup_{B' \in {\cal G}(n,B,l+1) } 5 B' \right) \ \geq \ {\cal
H}^k (A^{(l)} ) \ \stackrel{(\ref{A measure inequality})}{\ \geq \ } \mbox{
\small $\frac12$}  \ \cH^k(\mbox{\footnotesize{$\frac{1}{2}$}}B) \
. $$
On the other hand, since ${\cal G}(n,B,l+1)$ is a disjoint
collection of balls we have that 
\begin{align*}
{\cal H}^k \left( \bigcup_{B' \in {\cal G}(n,B,l+1) } 5 B' \right) &\leq \sum_{B' \in {\cal G}(n,B,l+1)}{\cH^k(5 B')} \\[3ex]
                                                                   &\stackrel{(\ref{volume comparison})}{\ \leq \
} 5^k \frac{c_2}{c_1} \sum_{{B'\in {\cal G}(n,B,l+1)}}{\cH^k(B')} \\[3ex]
                                                                   &= 5^k \frac{c_2}{c_1} {\cal H}^k\left(\bigcup^\circ_{B' \in {\cal G}(n,B,l+1)}  B' \right) \ .
\end{align*} 
Hence,
\begin{equation}\label{bl Hausdorff}
{\cal H}^k \left(\bigcup^\circ_{B' \in {\cal G}(n,B,l+1) } B'
\right)\,\ge\,\frac{c_1}{2 c_2 5^k} \ \
\cH^k(\mbox{\footnotesize{$\frac{1}{2}$}}B)\,.
\end{equation}

Now we are ready to construct the $(l+1)$th sub-level $K(n,B,l+1)$. Let $G' \ge G+1$ be sufficiently large so that Lemmas~\ref{kgb lemma} and \ref{C(A;n) lemma} are applicable to every ball $B'\in\cG(n,B,l+1)$ with $G'$ in place of $G$. Furthermore, ensure that $G'$ is sufficiently large so that for every $i \geq G'$,
\begin{align} \label{f and g relations}
f(\U_i) \leq \tfrac{1}{2}\min_{L \in K_l(n,B)}{f(r(L))} \qquad \text{and} \qquad g(\U_i) \leq \tfrac{1}{2}\min_{L \in K_l(n,B)}{g(r(L))}.
\end{align}
Imposing the above assumptions on $G'$ is possible since there are only finitely many balls in $K_l(n,B)$, and since $\U_i \to 0$ as $i \to \infty$ and $f$ and $g$ are dimension functions.

Now, to each ball $B'\in \cG(n,B,l+1)$ we apply Lemma \ref{kgb lemma} to obtain a collection of balls $K_{G',B'}$ and define
\[ K(n,B,l+1) := \bigcup_{B'\in \cG(n,B,l+1)}~{\bigcup_{\aj \in K_{G',B'}}}~\caj.\]
Consequently,
$$
\widetilde K(n,B,l+1)=\bigcup_{B'\in\cG(n,B,l+1)}~~K_{G',B'}\,.
$$
Since $G' \geq G$, properties (\ref{radii comparison})--(\ref{card}) remain valid. We now verify properties \textbf{(P1)}--\textbf{(P5)} for this sub-level.

Regarding \textbf{(P1)}, we first observe that it is satisfied for balls in $\bigcup_{\aj \in K_{G',B'}}{\bigcup_{L \in \caj}{L}}$ by the properties of $\caj$ and the fact that the balls in $K_{G',B'}$ are disjoint. Next, since any balls in $K_{G',B'}$ are contained in $B'$ and the balls $B' \in \cG(n,B,l+1)$ are disjoint, it follows that \textbf{(P1)} is satisfied for balls $L$ in $K(n,B,l+1)$. Finally, combining this with (\ref{blcoll}), we see that \textbf{(P1)} is satisfied for balls $L$ in $K_{l+1}(n,B)$. That \textbf{(P2)} is satisfied for this sub-level is a consequence of Lemma \ref{kgb lemma} (i) and (ii) and the fact that the balls $B' \in \cG(n,B,l+1)$ are disjoint.

To establish \textbf{(P3)} for $i = l+1$ note that
\begin{align*}
\sum_{\aj\in \widetilde K(n,B,l+1)}{V^k(A)}
          &= \sum_{B' \in \cG(n,B,l+1)}~{\sum_{\aj \in K_{G',B'}}{V^k(A)}} \\
          &\stackrel{(\ref{volume comparison})}\geq \frac{1}{c_2}\sum_{B' \in \cG(n,B,l+1)}~{\sum_{\aj \in K_{G',B'}}{\cH^k(A)}}. \\
\end{align*}
Then, by Lemma \ref{kgb lemma} and the disjointness of the balls in $\cG(n,B,l+1)$, we have that
\begin{align*}
\sum_{\aj\in \widetilde K(n,B,l+1)}{V^k(A)}
          &\geq \frac{1}{c_2}\sum_{B' \in \cG(n,B,l+1)}{\frac{1}{4 \times 15^k}\cH^k\left(B'\right)} \\
          &= \frac{1}{c_2 \times 4 \times 15^k} \cH^k\left(\bigcup_{B' \in \cG(n,B,l+1)}{B'}\right) \\
          &\stackrel{(\ref{bl Hausdorff})}\geq \frac{1}{c_2 \times 4 \times 15^k}\frac{c_1}{2 \times c_2 \times 5^k}\cH^k\left(\tfrac{1}{2}B\right) \\
          &\stackrel{(\ref{volume comparison})}\geq \frac{1}{2^{k+3}\times 5^k \times 15^k}\left(\frac{c_1}{c_2}\right)^2 V^k(B) \\
          &= c_3 V^k(B).
\end{align*}
Finally, \textbf{(P4)} is trivially satisfied as a consequence of the imposed condition (\ref{f and g relations}) and \textbf{(P5)}, that $l_L \geq 2$ for any ball $L$ in $K(n,B,l+1)$, follows from (\ref{card}).

Hence, properties \textbf{(P1)}--\textbf{(P5)} are satisfied up to the local sub-level $K(n,B,l+1)$ thus establishing the existence of the local level $K(n,B) = K_{l_B}(n,B)$ for each $B \in K(n-1)$. In turn, this establishes the existence of the $n$th level $K(n)$ (and also $\K(n)$).

\subsection{The measure $\mu$ on $\K_\eta$}

In this section, we  define a probability measure $\mu$ supported
on $\K_\eta$. We will eventually show that the measure satisfies
(\ref{task}). For any ball $L \in K(n)$, we attach a weight
$\mu(L)$ defined recursively as follows.

For $n=1$, we have that $L = B_0 := \K(1)$ and we set $\mu(L):=1$. For subsequent levels the measure is defined inductively.

Let $n \geq 2$ and suppose that $\mu(B)$ is defined for every $B\in K(n-1)$. In particular, we have that
$$
\sum_{B\in K(n-1)}\mu(B)=1\,.
$$
Let $L$ be a ball in $K(n)$. By construction, there is a unique ball $B\in K(n-1)$ such that $L\subset B$. Recall, by \eqref{vb7} and \eqref{vb3}, that
$$
K(n,B) := {\bigcup_{\aj\in \widetilde K_{l_B}(n,B)}{\caj}}\,
$$
and so $L$ is an element of one of the collections $\cC\ajj$ appearing in the right hand side of the above. We therefore define
$$
\mu(L):=\frac{1}{\#\cC\ajj}\times \frac{g(\U_{j'})^{\frac{k}{m}}}{\sum\limits_{\aj\in \widetilde K_{l_B}(n,B)}{g(\U_{j})^{\frac{k}{m}}}}\times\mu(B)\,.
$$
Thus $\mu$ is inductively defined on any
ball appearing  in the construction of $\K_{\eta}$. Furthermore, $\mu$ can be uniquely extended in a standard way to all Borel subsets $F$ of $\R^k$ to give a probability measure $\mu$ supported on
$\K_{\eta}$. Indeed, for any Borel subset $F$ of $\R^k$,
\[
\mu(F):= \mu(F \cap  \K_{\eta})  \; = \; \inf\;\sum_{L\in{\cal
C}(F)}\mu(L)  \ ,
\]
where the infimum is taken over all covers ${\cal C}(F)$ of $F
\cap \K_{\eta}$ by balls  $L \in \bigcup\limits_{n\in\N}K(n)$.
See \cite[Proposition 1.7]{Falconer ref} for further details.

We end this section by observing that
\begin{align} \label{initial L measure estimate}
\mu(L)
       &\leq \frac{1}{d_1\left(\frac{g(\U_{j'})^\frac{1}{m}}{\U_{j'}}\right)^l}\times \frac{g(\U_{j'})^{\frac{k}{m}}}{\sum\limits_{\aj\in \widetilde K_{l_B}(n,B)}{g(\U_{j})^{\frac{k}{m}}}}\times\mu(B) \nonumber \\[3ex]
       &= \frac{f(\U_{j'})}{d_1\sum\limits_{\aj\in \widetilde K_{l_B}(n,B)}{g(\U_{j})^{\frac{k}{m}}}}\times\mu(B).
\end{align}
This is a consequence of (\ref{cardinality of C(A;n)}) and the relationship between $f$ and $g$.
In fact the above inequality can be reversed if $d_1$ is replaced  by $d_2$.

\subsection{The measure of a  ball in  the Cantor set construction} \label{Cantor contruction measure estimates section}

The goal of this section is to prove that
\begin{align} \label{measure of balls in Cantor construction}
\mu(L) \ll \frac{V^f(L)}{\eta}
\end{align}
for any ball $L$ in $K(n)$ with $n \geq 2$.
We will begin with the level $n=2$. Fix any ball $L \in K(2) = K(2,B_0)$. Further let $\ajj\in\widetilde K_{l_{B_0}}(2,B_0)$ be such that
$L\in \cC\ajj$. Then, by (\ref{initial L measure estimate}), the definition of $\mu$ and the fact that $\mu(B_0)=1$, we have that
\begin{align} \label{level 2 measure 1}
\mu(L) &\leq \frac{f(\U_{j'})}{d_1\sum\limits_{\aj\in \widetilde K_{l_{B_0}}(2,B_0)}{g(\U_{j})^{\frac{k}{m}}}}~~.
\end{align}
Next, by Properties \textbf{(P3)} and \textbf{(P5)} of the Cantor set construction, we get that
\begin{align} \label{level 2 measure 2}
\sum\limits_{\aj\in \widetilde K_{l_{B_0}}(2,B_0)}{g(\U_{j})^{\frac{k}{m}}}
         &= \sum\limits_{\aj\in \widetilde K_{l_{B_0}}(2,B_0)}{V^k(A)} \nonumber \\
         &=\sum_{i=1}^{l_{B_0}}{\sum_{\aj\in \widetilde K(2,B_0,i)}{V^k(A)}} \nonumber \\
         &\stackrel{\textbf{(P3)}}\geq \sum_{i=1}^{l_{B_0}}{c_3V^k(B_0)} \nonumber \\
         &= l_{B_0}c_3V^k(B_0) \nonumber \\
         &\stackrel{(\ref{volume comparison})}\geq l_{B_0}\frac{c_3}{c_2}\cH^k(B_0) \nonumber \\
         &\stackrel{\textbf{(P5)}}\geq \frac{c_2\eta}{c_3\cH^k(B_0)}\frac{c_3}{c_2}\cH^k(B_0) ~=~ \eta.
\end{align}
Combining (\ref{level 2 measure 1}) and (\ref{level 2 measure 2}) gives \eqref{measure of balls in Cantor construction} as required since $f(\U_{j'})=f(r(L))=V^f(L)$.

Now let $n>2$ and assume that (\ref{measure of balls in Cantor construction}) holds for balls in $K(n-1)$. Consider an arbitrary ball $L$ in  $K(n)$. Then there exists a unique ball $B\in K(n-1)$ such that $L\in K(n,B)$.
Further let $\ajj\in\widetilde K_{l_{B}}(n,B)$ be such that $L\in \cC\ajj$.
Then it follows from (\ref{initial L measure estimate}) and our induction hypothesis that
\begin{align}
\mu(L) &\ll \frac{f(\U_{j'})}{d_1\sum\limits_{\aj\in \widetilde K_{l_B}(n,B)}{g(\U_{j})^{\frac{k}{m}}}}\times \frac{V^f(B)}{\eta}.\label{vb8}
\end{align}
Now, we have that
\begin{align}
\sum\limits_{\aj\in \widetilde K_{l_B}(n,B)}{g(\U_{j})^{\frac{k}{m}}}
       &= \sum_{i=1}^{l_{B}}{\sum_{\aj\in \widetilde K(n,B,i)}{V^k(A)}}\nonumber \\
       &\stackrel{\textbf{(P3)}}\geq \sum_{i=1}^{l_{B}}{c_3V^k(B)} \nonumber \\
       &= l_Bc_3V^k(B) \nonumber \\
       &\stackrel{\textbf{(P5)}}\geq \frac{V^f(B)}{c_3V^k(B)}c_3V^k(B) \nonumber \\
       &=V^f(B).\label{vb9}
\end{align}
Since $V^f(L)=f(\U_j)$, combining \eqref{vb8} and \eqref{vb9} gives (\ref{measure of balls in Cantor construction}) and thus completes the proof of this section.

\subsection{The measure of an  arbitrary ball} \label{arbitrary ball measure estimates section}

Set $r_0 := \min\{r(B): B \in K(2)\}$. Take an arbitrary ball $D$ in $\Om$ such that $r(D) < r_0$. We wish to establish (\ref{task}) for $D$, i.e. we wish to show that
\[\mu(D) \ll \frac{V^f(D)}{\eta}\,,\]
where the implied constant is independent of $D$ and $\eta$.
In accomplishing this goal the following Lemma from \cite{BV MTP} will be useful.

\begin{lemma}\label{separation lemma}
Let $A:=B(x_A,r_A)$ and $M:=B(x_M,r_M)$ be arbitrary balls such that
$A\cap M\not=\emptyset$ and $A\setminus(cM)\not=\emptyset$ for
some $c\ge3$. Then $r_M\,\le \,r_A$ and $cM\subset 5A$.
\end{lemma}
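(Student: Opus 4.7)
The plan is to use the triangle inequality twice: once to bound $r_M$ in terms of $r_A$ using both hypotheses together, and then once more to control how far a point of $cM$ can be from $x_A$.

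First I would pick a witness $y\in A\setminus(cM)$, so that $\|y-x_M\|\geq cr_M$ while $\|y-x_A\|\leq r_A$. Together with the fact that $A\cap M\neq\emptyset$ forces $\|x_A-x_M\|\leq r_A+r_M$, the triangle inequality
\[
cr_M\ \leq\ \|y-x_M\|\ \leq\ \|y-x_A\|+\|x_A-x_M\|\ \leq\ r_A+(r_A+r_M)
\]
gives $(c-1)r_M\leq 2r_A$, so
\[
r_M\ \leq\ \tfrac{2}{c-1}\,r_A.
\]
Since $c\geq 3$ we have $\tfrac{2}{c-1}\leq 1$, hence $r_M\leq r_A$, which is the first claim.

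For the inclusion $cM\subset 5A$, take an arbitrary $z\in cM$, so $\|z-x_M\|\leq cr_M$. The triangle inequality combined with $\|x_A-x_M\|\leq r_A+r_M$ yields
\[
\|z-x_A\|\ \leq\ \|z-x_M\|+\|x_M-x_A\|\ \leq\ cr_M+r_A+r_M\ =\ (c+1)r_M+r_A.
\]
Applying the bound $r_M\leq\tfrac{2}{c-1}r_A$ from the first step,
\[
\|z-x_A\|\ \leq\ \tfrac{2(c+1)}{c-1}\,r_A+r_A\ =\ \tfrac{3c+1}{c-1}\,r_A.
\]
A short calculation shows $\tfrac{3c+1}{c-1}\leq 5$ precisely when $c\geq 3$, so $\|z-x_A\|\leq 5r_A$ and $z\in 5A$, establishing the second claim.

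There is no real obstacle here: the lemma is a clean two-line application of the triangle inequality, and the constants fit together exactly at the threshold $c=3$ (which explains the hypothesis and the appearance of the factor $5$ on the right-hand side). The only thing to be mildly careful about is whether the balls are taken to be open or closed, which only changes strict versus non-strict inequalities and does not affect the argument.
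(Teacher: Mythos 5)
Your proof is correct, and the two triangle-inequality estimates are exactly the right tools. Note that the paper does not supply its own proof of this lemma --- it is quoted verbatim from \cite{BV MTP}, where the argument is essentially the one you give: use a witness $y\in A\setminus(cM)$ together with $\|x_A-x_M\|\le r_A+r_M$ to force $(c-1)r_M\le 2r_A$, then chase the triangle inequality for a point $z\in cM$. Your arithmetic ($\tfrac{2}{c-1}\le 1$ and $\tfrac{3c+1}{c-1}\le 5$, both equivalent to $c\ge 3$) checks out, and your parenthetical remark about open versus closed balls is accurate and does not affect the conclusion.
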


A good part of the subsequent argument will follow the same reasoning as given in \cite[Section 5.5]{BV MTP}. However, there will also be obvious alterations to the proofs that arise from the different construction of a Cantor set used here. Recall that the measure $\mu$ is supported on $\K_{\eta}$. Without loss of generality, we will make the following two assumptions:
\begin{itemize}
\item{$D \cap \K_{\eta} \neq \emptyset$;}
\item{for every $n$ large enough $D$ intersects at least two balls in $K(n)$.}
\end{itemize}

If the first of these were false then we would have $\mu(D)=0$ as $\mu$ is supported on $\K_\eta$ and so (\ref{task}) would trivially follow. If the second assumption were false then $D$ would have to intersect exactly one ball, say $L_{n_i}$ from levels $K_{n_i}$ with arbitrarily large $n_i$. Then, by \eqref{measure of balls in Cantor construction}, we would have $\mu(D) \leq \mu(L_{n_i})\to0$ as $i\to\infty$ and so, again, (\ref{task}) would be trivially true.

By the above two assumptions, we have that there exists a maximum integer $n$ such that
\begin{align}\label{number of balls intersected}
\text{$D$ intersects at least 2 balls from $K(n)$}
\end{align}
and
\begin{align*}
\text{$D$ intersects only one ball $B$ from $K(n-1)$}.
\end{align*}

By our choice of $r_0$, we have that $n > 2$. If $B$ is the only ball from $K(n-1)$ which has non-empty intersection with $D$, we may also assume that $r(D) < r(B)$. To see this, suppose the contrary that $r(B) \leq r(D)$. Then, since $D \cap \K_\eta \subset B$ and $f$ is increasing, upon recalling (\ref{measure of balls in Cantor construction}) we would have
\[\mu(D) \leq \mu(B) \ll \frac{V^f(B)}{\eta} = \frac{f(r(B))}{\eta} \leq \frac{f(r(D))}{\eta} = \frac{V^f(D)}{\eta}, \]
and so we would be done.

Now, since $K(n,B)$ is a cover for $D \cap \K_{\eta}$, we have
\begin{align} \label{measure sum}
\mu(D) &\leq \sum_{i=1}^{l_B}{\sum_{L \in K(n,B,i): L \cap D \neq \emptyset}{\mu(L)}} \nonumber \\
       &= \sum_{i=1}^{l_B}{\sum_{\aj \in\widetilde K(n,B,i)}~{\sum_{\substack{L \in \caj \\ L \cap D \neq \emptyset}}{\mu(L)}}} \nonumber \\
       &\stackrel{(\ref{measure of balls in Cantor construction})}{\ll}~ \sum_{i=1}^{l_B}{\sum_{\aj \in \widetilde K(n,B,i)}~{\sum_{\substack{L \in \caj \\ L \cap D \neq \emptyset}}{\frac{V^f(L)}{\eta}}}}.
\end{align}
To estimate the right-hand side of (\ref{measure sum}) we consider the following types of sub-levels:

\bigskip

\noindent \underline{{\em Case 1}\/} :~ Sub-levels $K(n,B,i)$ for which
\[\#\{L \in K(n,B,i): L \cap D \neq \emptyset\} = 1.\]

\bigskip

\noindent \underline{{\em Case 2}\/} :~ Sub-levels $K(n,B,i)$ for which
\[\#\{L \in K(n,B,i): L \cap D \neq \emptyset\} \geq 2\quad\text{and}\]
\begin{equation}\label{vb10}
\#\{\aj\in\widetilde K(n,B,i) \text{ with } D \cap L\neq \emptyset~\text{for some }L\in\caj\} \geq 2.
\end{equation}

\bigskip

\noindent \underline{{\em Case 3}\/} :~ Sub-levels $K(n,B,i)$ for which
\[\#\{L \in K(n,B,i): L \cap D \neq \emptyset\} \geq 2\quad\text{and}\]
\[\#\{\aj\in\widetilde K(n,B,i) \text{ with } D \cap L\neq \emptyset~\text{for some }L\in\caj\} = 1.\]

Strictly speaking we also need to consider the sub-levels $K(n,B,i)$ for which $\#\{L \in K(n,B,i): L \cap D \neq \emptyset\} = 0$. However, these sub-levels do not contribute anything to the sum on the right-hand side of (\ref{measure sum}).

\medskip

\noindent {\it Dealing with  Case 1}.
Let $K(n,B,i^*)$ denote the first sub-level within Case 1 which has non-empty intersection with $D$. Then there exists a unique ball $L^*$ in $K(n,B,i^*)$ such that $L^* \cap D \neq \emptyset$.
By (\ref{number of balls intersected}) there is another ball $M \in K(n,B)$ such that $M \cap D \neq \emptyset$. By property \textbf{(P1)}, $3L^*$ and $3M$ are disjoint. It follows that $D \setminus 3L^* \neq \emptyset$. Therefore, by Lemma \ref{separation lemma}, we have that $r(L^*) \leq r(D)$ and so, since $f$ is increasing,
\begin{align} \label{case 1 volume comparison}
V^f(L^*) \leq V^f(D).
\end{align}
By Property \textbf{(P4)} we have that for any $i\in\{i^*+1,\ldots,l_B\}$ and any $L\in
K(n,B,i)$ we have that
$$
V^f(L)=f(r(L))\le 2^{-(i-i^*)}\ f(r(L^*))=2^{-(i-i^*)}\ V^f(L^*).
$$
Using these inequalities and (\ref{case 1 volume comparison}) we see that the contribution to the right-hand side of (\ref{measure sum}) from Case 1 is:
\begin{align} \label{case 1}
\sum_{i \in \text{Case 1}}~~{\sum_{\substack{L \in K(n,B,i)\\ L \cap D \neq \emptyset}}{\frac{V^f(L)}{\eta}}}
      &\leq \sum_{i\ge i^*}~2^{-(i-i^*)}{\frac{V^f(L^*)}{\eta}} \le 2{\frac{V^f(L^*)}{\eta}}\le2\frac{V^f(D)}{\eta}.
\end{align}

\medskip

\noindent {\it Dealing with  Case 2}.
Let $K(n,B,i)$ be any sublevel subject to the conditions of Case 2. Then there exist distinct balls
$\aj$ and $(A';j')$ in $\widetilde K(n,B,i)$ and balls $L\in\caj$ and $L'\in\cC(A';j')$ such that $L \cap D \neq \emptyset$ and $L' \cap D \neq \emptyset$.
Since $L \cap D \neq \emptyset$ and $L\subset A$ we have that $A\cap D\neq\emptyset$. Similarly, $A'\cap D\neq\emptyset$. Furthermore, by Property~\textbf{(P2)}, the balls $3A$ and $3A'$ are disjoint and contained in $B$. Hence, $D \setminus 3A \neq \emptyset$. Therefore, by Lemma \ref{separation lemma}, $r(A) \leq r(D)$ and $A\subset 3A \subset 5D$. Hence, on using (\ref{cardinality of C(A;n)}) we get that the contribution to the right-hand side of (\ref{measure sum}) from Case 2 is estimated as follows
\begin{align*}
\sum_{i \in \text{Case 2}}~{\sum_{\aj\in \widetilde K(n,B,i)}~{\sum_{\substack{L \in \caj \\ L \cap D \neq \emptyset}}{\frac{V^f(L)}{\eta}}}}
     &\leq \sum_{i \in \text{Case 2}}~~{\sum_{\substack{\aj\in\widetilde K(n,B,i) \\ A \subset 5D}}{\#\caj\frac{f(\U_j)}{\eta}}} \\
     &\stackrel{(\ref{cardinality of C(A;n)})}\ll \sum_{i \in \text{Case 2}}~{\sum_{\substack{\aj\in\widetilde K(n,B,i) \\ A \subset 5D}}{\left(\frac{g(\U_j)^{\frac{1}{m}}}{\U_j}\right)^l\frac{f(\U_j)}{\eta}}} \\
     &= \sum_{i \in \text{Case 2}}~{\sum_{\substack{\aj\in\widetilde K(n,B,i) \\ A \subset 5D}}{\frac{g(\U_j)^{\frac{l}{m}}\U_j^{-l}\U_j^{l}g(\U_j)}{\eta}}} \\
     &= \sum_{i \in \text{Case 2}}~{\sum_{\substack{\aj\in\widetilde K(n,B,i) \\ A \subset 5D}}{\frac{g(\U_j)^{\frac{l}{m}+1}}{\eta}}} \\
     &= \sum_{i \in \text{Case 2}}~{\sum_{\substack{\aj\in\widetilde K(n,B,i) \\ A \subset 5D}}{\frac{g(\U_j)^{\frac{k}{m}}}{\eta}}} \\
     &= \sum_{i \in \text{Case 2}}~{\sum_{\substack{\aj\in\widetilde K(n,B,i) \\ A \subset 5D}}{\frac{V^k(A)}{\eta}}}.
\end{align*}
Combining this with properties \textbf{(P2)} and \textbf{(P5)} we get
\begin{align*}
\sum_{i \in \text{Case 2}}~{\sum_{\aj\in \widetilde K(n,B,i)}~{\sum_{\substack{L \in \caj \\ L \cap D \neq \emptyset}}{\frac{V^f(L)}{\eta}}}}
     &\stackrel{(\ref{volume comparison})}\ll \frac{1}{c_1\eta}\sum_{i \in \text{Case 2}}~{\sum_{\substack{\aj\in\widetilde K(n,B,i) \\ A\subset 5D}}{\cH^k(A)}} \\
     &\stackrel{\textbf{(P2)}}= \frac{1}{c_1\eta}\sum_{i \in \text{Case 2}}~{\cH^k\left(\bigcup_{\substack{\aj\in\widetilde K(n,B,i) \\ A\subset 5D}}A\right)} \\
     &\leq \frac{1}{c_1\eta}\sum_{i \in \text{Case 2}}{\cH^k(5D)} \\
     &\leq \frac{1}{c_1\eta}5^kl_B\cH^k(D) \\
     &\stackrel{(\ref{volume comparison})}\leq \frac{c_2}{c_1\eta}5^kl_BV^k(D) \\
     &\stackrel{\textbf{(P5)}}\leq \frac{c_2}{c_1\eta}5^k\left(\frac{2V^f(B)}{c_3V^k(B)}\right)V^k(D).
\end{align*}
Recalling our assumption that $r(D) < r(B)$ and the fact that $r^{-k}f(r)$ is decreasing, we obtain that
\begin{align} \label{case 2}
\sum_{i \in \text{Case 2}}~{\sum_{\aj\in\widetilde K(n,B,i)}~{\sum_{\substack{L \in \caj \\ L \cap D \neq \emptyset}}{\frac{V^f(L)}{\eta}}}}
     &\ll \frac{c_2}{c_1\eta}5^k\frac{2}{c_3}\frac{V^f(D)}{V^k(D)}V^k(D) \nonumber \\
     &= \frac{2 c_2 5^k}{c_1c_3}\frac{V^f(D)}{\eta} \nonumber \\
     &\ll \frac{V^f(D)}{\eta}.
\end{align}

\noindent {\it Dealing with  Case 3}.
First of all note that for each level $i$ of Case 3 there exists a unique $(A_i;j_i)\in\widetilde K(n,B,i)$ such that $D$ has a non-empty intersection with balls in $\cC(A_i;j_i)$. Let $K(n,B,i^{**})$ denote the first sub-level within Case 3. Then there exists a ball $L^{**}$ in $K(n,B,i^{**})$ such that $L^{**} \cap D \neq \emptyset$.
By (\ref{number of balls intersected}) there is another ball $M \in K(n,B)$ such that $M \cap D \neq \emptyset$. By property \textbf{(P1)}, $3L^{**}$ and $3M$ are disjoint. It follows that $D \setminus 3L^{**} \neq \emptyset$ and therefore, by Lemma \ref{separation lemma}, we have that $r(L^{**}) \leq r(D)$ and so, since $g$ is increasing, we have that
\begin{align} \label{case 1 volume comparison+}
g(r(L^{**})) \leq g(r(D)).
\end{align}
Furthermore, by Property \textbf{(P4)}, for any $i\in\{i^{**}+1,\ldots,l_B\}$ and any $L\in
K(n,B,i)$ we have that
$$
g(r(L))\le 2^{-(i-i^{**})}\ g(r(L^{**})).
$$
Then, the contribution to the sum (\ref{measure sum}) from Case 3 is estimated as follows
\begin{align}
\sum_{i \in \text{Case 3}}~{\sum_{\aj\in\widetilde K(n,B,i)}~{\sum_{\substack{L \in \caj \\ L \cap D \neq \emptyset}}{\frac{V^f(L)}{\eta}}}}
     &\leq \sum_{i \in \text{Case 3}}~{\sum_{\substack{L \in \cC(A_i;j_i) \\ L \cap D \neq \emptyset}}{\frac{V^f(L)}{\eta}}} \nonumber \\
     &= \sum_{i \in \text{Case 3}}~{\sum_{\substack{L \in \cC(A_i;j_i) \\ L \cap D \neq \emptyset}}{\frac{f(\U_{j_i})}{\eta}}} \nonumber \\
     &\ll \sum_{i \in \text{Case 3}}~{\#\cC(A_i;j_i)\times\frac{f(\U_{j_i})}{\eta}} \nonumber \\
     &\ll \sum_{i \in \text{Case 3}}~{\left(\frac{r(D)}{\U_{j_i}}\right)^l\frac{f(\U_{j_i})}{\eta}} \nonumber \\
     &= \sum_{i \in \text{Case 3}}{r(D)^l\frac{g(\U_{j_i})}{\eta}}\nonumber\\
     &\ll \frac{r(D)^l}{\eta}\sum_{i \in \text{Case 3}}{\frac{g(\U_{j_{i^{**}}})}{2^{i-i^{**}}}} \nonumber \\
     &\leq 2\frac{r(D)^l}{\eta}g(\U_{j_{i^{**}}}). \nonumber
\end{align}
Noting that $\U_{j_i} = r(L^{**})$ and recalling (\ref{case 1 volume comparison+}) we see that
\begin{align}
\sum_{i \in \text{Case 3}}~{\sum_{\aj\in\widetilde K(n,B,i)}~{\sum_{\substack{L \in \caj \\ L \cap D \neq \emptyset}}{\frac{V^f(L)}{\eta}}}} &\leq 2\frac{r(D)^l}{\eta}g(r(D)) = 2\frac{f(r(D))}{\eta} \ll \frac{V^f(D)}{\eta}\label{case 3}.
\end{align}
Finally, combining (\ref{case 1}), (\ref{case 2}) and (\ref{case 3}) together with (\ref{measure sum}) gives $\mu(D) \ll \frac{V^f(D)}{\eta}$ and thus completes the proof of Theorem \ref{mtp for linear forms theorem}.

\end{document}